\newlist{exoenum}{enumerate}{3}
\setlist[exoenum,1]{label=\arabic*)}
\setlist[exoenum,2]{label=\alph*)}
\setlist[exoenum,3]{label=\roman*)}
\newcommand{\ra}{\rightarrow}
\newcommand{\la}{\leftarrow}
\renewcommand\geq{\geqslant}
\renewcommand\leq{\leqslant}
\renewcommand\le{\leqslant}
\theoremstyle{plain}
\newtheorem{theorem}{Theorem}
\newtheorem{proposition}[theorem]{Proposition}
\newtheorem{claim}{Claim}[theorem]
\newtheorem{corollary}[theorem]{Corollary}
\newtheorem{lemma}[theorem]{Lemma}
\theoremstyle{definition}
\newtheorem{problem}[theorem]{Problem}
\newtheorem{conjecture}[theorem]{Conjecture}
\newenvironment{subproof}{\par\noindent {\it Subproof}.\ }{\hfill$\lozenge$\par\vspace{11pt}}
\DeclareMathOperator{\spa}{span}
\DeclareMathOperator{\dist}{dist}
\DeclareMathOperator{\Forb}{Forb}
\DeclareMathOperator{\SForb}{S-Forb}
\DeclareMathOperator{\Dig}{Dig}
\DeclareMathOperator{\lvl}{lvl}
\def\borne{(k+\ell -2)(k+\ell-3)(2\ell+2)(k+\ell+1)}
\title{\bf Subdivisions of oriented cycles in digraphs with large chromatic number\footnote{This work was supported by ANR under contract STINT ANR-13-BS02-0007.}}
\author[1]{Nathann Cohen}
\author[2,3]{Fr\'ed\'eric Havet}
\author[2,3,4]{William Lochet}
\author[3,2]{Nicolas Nisse}
\affil[1]{ CNRS, LRI, Univ. Paris Sud, Orsay, France}
\affil[2]{ Univ. Nice Sophia Antipolis, CNRS, I3S, UMR 7271, 06900 Sophia Antipolis, France}
\affil[3]{ INRIA, France}
\affil[4]{ LIP, ENS de Lyon, France}
\begin{document}

\maketitle

\begin{abstract}
An {\it oriented cycle} is an orientation of a undirected cycle.
We first show that for any oriented cycle $C$, there are digraphs containing no subdivision of $C$  (as a subdigraph) and arbitrarily large chromatic number.
In contrast, we show that for any $C$ a cycle with two blocks, every strongly connected digraph with sufficiently large chromatic number contains a subdivision of $C$. We prove a similar result for the antidirected cycle on four vertices (in which two vertices have out-degree $2$ and two vertices have in-degree $2$). 
\end{abstract}

\section{Introduction}

What can we say about the subgraphs of a graph $G$ with large chromatic
number? Of course, one way for a graph to have large chromatic number is
to contain a large complete subgraph. However, if we consider graphs with large
chromatic number and small clique number, then we can ask what other subgraphs
must occur. We can avoid any graph $H$ that contains a cycle because, as proved by Erd\H{o}s~\cite{Erd59},  there are graphs with arbitrarily high girth and
chromatic number. Reciprocally, one can easily show that  every $n$-chromatic graph contains every tree of order $n$ as a subgraph.

The following more general question attracted lots of attention.
\begin{problem}\label{prob-undirected}
Which are the graph classes ${\cal G}$ such that every graph with sufficiently large chromatic number contains an element of ${\cal G}$ ?
\end{problem}
If such a class is finite, then it must contain a tree, by the above-mentioned result of Erd\H{o}s.
If it is infinite however, it does not necessary contains a tree.
For example, every graph with chromatic number at least $3$ contains an odd cycle. This was strengthened by Erd\H{o}s and Hajnal~\cite{ErHa66} who proved that every graph with chromatic number at least $k$ contains an odd cycle of length at least $k$.
A counterpart of this theorem for even length was settled by Mih\'ok and Schiermeyer~\cite{MiSc04}: every graph with chromatic number at least $k$ contains an even cycle of length at least $k$.
Further results on graphs with prescribed lengths of cycles have been obtained~\cite{Gya92,MiSc04,Wan08,LRS10,KRS11}.

\medskip
In this paper, we  consider the analogous problem for directed graphs, which is in fact a generalization of the undirected one.
 The {\it chromatic number} $\chi(D)$ of a digraph $D$ is the chromatic number of its underlying graph.
The {\it chromatic number} of a class of digraphs ${\cal D}$, denoted by $\chi({\cal D})$, is the smallest $k$ such that
$\chi(D)\leq k$ for all $D\in {\cal D}$, or $+\infty$ if no such $k$ exists. By convention, if  ${\cal D}=\emptyset$, then $\chi({\cal D})=0$. If $\chi({\cal D}) \neq +\infty$, we say that ${\cal D}$ {\it has bounded chromatic number}.

We are interested in the following question~: which are the  digraph classes ${\cal D}$ such that every digraph with sufficiently large chromatic number contains an element of ${\cal D}$ ?
Let us denote by  $\Forb(H)$ (resp. $\Forb({\cal H})$) the class of digraphs that do not contain $H$ (resp. any element of ${\cal H}$) as a subdigraph.
The above question can be restated as follows :
\begin{problem}
Which are the classes of digraphs ${\cal D}$ such that $\chi (\Forb({\cal D})) < +\infty$ ?
\end{problem}

 This is a generalization of Problem~\ref{prob-undirected}. Indeed,
 let us denote by $\Dig({\cal G})$ the set of digraphs whose underlying digraph is in ${\cal G}$;
 Clearly, $\chi({\cal G})=\chi(\Dig({\cal G}))$.

An {\it oriented graph} is an orientation of a (simple) graph; equivalently it is a digraph with no directed cycles of length $2$.
Similarly, an {\it oriented path} (resp. {\it oriented cycle}, {\it oriented tree}) is an orientation of a path (resp. cycle, tree).
An oriented path (resp., an oriented cycle) is said {\it directed} if all nodes have in-degree and out-degree at most $1$. 

Observe that if $D$ is an orientation of a graph $G$ and $\Forb(D)$ has bounded chromatic number, then $\Forb(G)$ has also bounded chromatic number, so $G$ must be a tree.
Burr proved that every $(k-1)^2$-chromatic digraph contains every oriented tree of order $k$.
This was slightly improved by Addario-Berry et al.~\cite{AHS+13} who proved the following.
\begin{theorem}[Addario-Berry et al.~\cite{AHS+13}]\label{thm:univ}
 Every $(k^2/2-k/2+1)$-chromatic oriented graph contains every oriented tree of order $k$. In other words, for every oriented tree $T$ of order $k$, $\chi(\Forb(T))\leq k^2/2-k/2$.
\end{theorem}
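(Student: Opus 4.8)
The plan is to prove the contrapositive: if an oriented graph $D$ does \emph{not} contain the oriented tree $T$ on $k$ vertices as a subdigraph, then $\chi(D)\le k^2/2-k/2=\binom{k}{2}$. I would argue by induction on $k$, the guiding arithmetic being that $\binom{k}{2}-\binom{k-1}{2}=k-1$, so each inductive step should cost only $k-1$ extra colours. It is worth noting first why the naive route fails: one cannot simply pass to a subgraph of large minimum degree (via degeneracy) and embed $T$ greedily, because a greedy embedding of an oriented tree needs control of \emph{both} the in-degree and the out-degree at each image vertex, and a bound on the total degree gives no control over this split. Indeed, the orientation of a complete bipartite graph with all arcs directed from one side to the other has arbitrarily large minimum degree, yet chromatic number $2$ and no directed path on three vertices; so the chromatic hypothesis must be used in an essentially structural way rather than through minimum degree alone.

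For the inductive step, fix a leaf $\ell$ of $T$ with neighbour $u$, and set $T'=T-\ell$, an oriented tree on $k-1$ vertices. Since reversing all arcs of $D$ preserves $\chi(D)$ and turns $T$-freeness into $\overleftarrow{T}$-freeness, I may assume the edge is oriented $u\to\ell$, i.e.\ that $\ell$ is an \emph{out-leaf}. Define $S$ to be the set of vertices $v\in V(D)$ that play the role of $u$ in some copy of $T'$ in $D$. Then $D-S$ is $T'$-free: a copy of $T'$ contained in $D-S$ would exhibit its image of $u$ as a vertex of $S$, a contradiction. By the induction hypothesis $\chi(D-S)\le\binom{k-1}{2}$, so it remains to show $\chi(D[S])\le k-1$, which would give $\chi(D)\le\binom{k-1}{2}+(k-1)=\binom{k}{2}$.

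The hard part will be bounding $\chi(D[S])$ by exactly $k-1$, and this is where $T$-freeness (not merely $T'$-freeness) enters. For $v\in S$, fix a witnessing copy of $T'$ with $u\mapsto v$; every out-neighbour of $v$ must lie in that copy, since otherwise it would extend the copy to a copy of $T$. Hence every vertex of $S$ has out-degree at most $k-2$. This bound by itself only yields degeneracy $2(k-2)$ and therefore $\chi(D[S])\le 2k-3$, which after telescoping recovers Burr's weaker estimate of order $(k-1)^2$. To gain the factor of two and reach $k-1$, I would instead colour $D[S]$ by a Gallai--Roy-type potential, namely the number of vertices on a longest directed path of $D[S]$ ending at a given vertex, and argue, using a judicious choice of the removed leaf (for instance an endpoint of a longest directed path of $T$) together with the fixed $T'$-copies, that a directed path inside $D[S]$ cannot be so long as to admit $k$ colours. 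Establishing this structural fact, which should convert a too-long directed path in $S$ into a forbidden copy of $T$, is the main obstacle and the crux of the improvement over Burr's theorem.
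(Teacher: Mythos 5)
This statement is quoted from Addario-Berry et al.~\cite{AHS+13}; the paper you are reading gives no proof of it, so there is no internal argument to compare yours against. Judged on its own, your proposal sets up a reasonable induction scheme --- the reduction to $T'=T-\ell$, the observation that $D-S$ is $T'$-free, the out-degree bound $d^+_D(v)\leq k-2$ for $v\in S$, and the arithmetic $\binom{k-1}{2}+(k-1)=\binom{k}{2}$ are all correct --- but the decisive step, $\chi(D[S])\leq k-1$, is not proved, and you say so yourself. That step is the entire theorem: everything before it only yields $\chi(D[S])\leq 2k-3$ by degeneracy and hence Burr's quadratic bound with the wrong constant.

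The Gallai--Roy repair you sketch does not close the gap as stated. To colour $D[S]$ with $k-1$ colours via a longest-path potential you would need to show that $D[S]$ contains no directed path on $k$ vertices, i.e.\ that a directed path $v_1\rightarrow\cdots\rightarrow v_k$ inside $S$ forces a copy of $T$. But the only structure you have attached to each $v_i$ is a copy of the \emph{fixed} tree $T'$ rooted at $v_i$ in the role of $u$; these copies may intersect the path and one another arbitrarily, and a long directed path together with overlapping copies of $T'$ does not assemble into a copy of an arbitrary oriented tree $T$. Nor does the proposed choice of leaf (an endpoint of a longest directed path of $T$) interact with the definition of $S$, which is phrased purely in terms of which vertices can host $u$ in a $T'$-copy. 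Already for $T$ an out-star (one centre with $k-1$ out-leaves) the set $S$ is just the set of vertices of out-degree exactly $k-2$, and no directed-path argument of the kind you describe is available. The published proof in \cite{AHS+13} does proceed by leaf removal with a cost of $k-1$ colours per step, but the $(k-1)$-colouring of the critical vertex set rests on a more refined structural lemma than either the out-degree bound or a single Gallai--Roy potential; supplying that lemma is exactly what is missing here.
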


 \begin{conjecture}[Burr~\cite{Burr80}]\label{cnj:tn}
	  Every $(2k-2)$-chromatic digraph $D$ contains a copy of any oriented tree $T$ of order $k$.
	\end{conjecture}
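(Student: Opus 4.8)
The plan is to proceed by induction on the order $k$ of the tree, peeling off a single leaf at each step. Fix a leaf $\ell$ of $T$ with neighbour $p$, and set $T' = T - \ell$; up to reversing all arcs of both $T$ and the host digraph $D$ (which preserves $\chi$ and swaps containment of $T$ for containment of its reverse), we may assume the arc between them is $p\ell$, so that $\ell$ is an out-leaf. The target bound $\chi(\Forb(T)) \le 2k-3$ corresponds to a budget of exactly two colours per leaf, since $(2k-2)-(2(k-1)-2)=2$. The naive hope is: given $\chi(D)\ge 2k-2$, extract a subdigraph $D'$ with $\chi(D')\ge 2k-4$ into which $T'$ embeds by induction, in such a way that the vertex playing the role of $p$ still has an out-neighbour in $D$ outside the image of $T'$; mapping $\ell$ there completes the embedding.

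To make the induction carry the information needed for the extension step, I would strengthen the statement to an \emph{anchored} (rooted) version: instead of merely embedding $T'$, embed it so that the image of $p$ is a prescribed vertex, or at least a vertex of large out-degree, with the two spare colours reserved precisely to guarantee an available out-neighbour. The base case is a directed path, for which the conjecture is immediate from the Gallai--Roy theorem (a digraph of chromatic number $k$ already contains a directed path on $k$ vertices, so $2k-2\ge k$ is more than enough). As auxiliary structural input I would use that a digraph of chromatic number at least $c$ contains a subdigraph of minimum total degree at least $c-1$ (otherwise the underlying graph is $(c-2)$-degenerate, hence $(c-1)$-colourable), which is the lever that converts chromatic number into the in- and out-degrees feeding the leaf-extension.

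The hard part is exactly the control of availability at the anchor while spending only two colours. Induction hands us \emph{some} copy of $T'$, but not control over which host vertex realises $p$ nor over its out-degree, and the out-neighbours of that vertex may already all lie in the image of $T'$ (whose size is $k-2$). Guaranteeing otherwise seems to require a far stronger anchored embedding lemma than is presently known, and the obvious ways of forcing room cost a \emph{multiplicative} factor rather than an additive two: paying separately for the depth of the tree (one linear factor, via Gallai--Roy) and for its branching (a second linear factor) is precisely what yields the quadratic bound $\chi(\Forb(T))\le k^2/2-k/2$ of Theorem~\ref{thm:univ}. Collapsing this product into the additive recursion demanded by Conjecture~\ref{cnj:tn} is the crux, and I expect it to be the genuine obstacle: no technique is known that simultaneously controls both in- and out-degree availability along a building sequence at linear cost, which is why the $2k-2$ bound remains open in general and is established only for restricted classes of trees.
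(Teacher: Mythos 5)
The statement you were asked about is Conjecture~\ref{cnj:tn}, attributed to Burr (1980): the paper offers no proof of it, states it explicitly as a conjecture, and it remains an open problem. The best general bound actually proved is the quadratic one of Theorem~\ref{thm:univ} ($\chi(\Forb(T))\le k^2/2-k/2$ for an oriented tree $T$ of order $k$), due to Addario-Berry et al.; the linear bound $2k-2$ is known only for restricted families such as directed paths (Gallai--Roy, Theorem~\ref{thm:gallairoy}) and paths with two blocks (Theorem~\ref{thm:2blocks}). So there is no proof in the paper to compare your attempt against.

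That said, your write-up is accurate as a diagnosis rather than a proof, and to your credit you say so. The leaf-peeling induction with an additive budget of two colours per vertex is indeed the natural attack, and the obstruction you name is the real one: the inductive hypothesis yields \emph{some} embedding of $T'$ but no control over which host vertex realises the anchor $p$, nor over whether that vertex retains an out-neighbour outside the $(k-2)$-vertex image of $T'$. The known fix --- passing to a subdigraph of large minimum out- or in-degree, or running a Gallai--Roy-type level argument for depth and a separate degree argument for branching --- costs a multiplicative factor, which is exactly how the $\thet{k^2}$ bound of Theorem~\ref{thm:univ} arises. Your proposal does not close this gap, and no one else has either; the concluding paragraph of your attempt is a correct statement of the state of the art, not a proof of the conjecture. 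The only thing to flag is that you should not present this as a ``proof proposal'' for the statement: it is an explanation of why the statement is hard, which is the honest answer here.
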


For special oriented trees $T$, better bounds on the chromatic number of $\Forb(T)$ are known. The most famous one, known as Gallai-Roy Theorem, deals with directed paths (a {\it directed path} is an oriented path in which all arcs are in the same direction) and can be restated as follows, denoting by $P^+(k)$ the directed path of length $k$.

  \begin{theorem}[Gallai~\cite{Gal68}, Hasse~\cite{Has64}, Roy~\cite{Roy67}, Vitaver~\cite{Vit62}]\label{thm:gallairoy}
	  $\chi(\Forb(P^+(k)))=k$.
	\end{theorem}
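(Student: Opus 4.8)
The plan is to prove the two inequalities $\chi(\Forb(P^+(k)))\ge k$ and $\chi(\Forb(P^+(k)))\le k$ separately, the lower bound being easy and the upper bound carrying the substance. For the lower bound I would exhibit a single digraph belonging to $\Forb(P^+(k))$ whose chromatic number equals $k$. The natural candidate is the transitive tournament $TT_k$ on $k$ vertices: its underlying graph is $K_k$, so $\chi(TT_k)=k$, while its unique longest directed path runs through all $k$ vertices and hence has length $k-1<k$. Thus $TT_k$ contains no copy of $P^+(k)$, i.e.\ $TT_k\in\Forb(P^+(k))$, which forces $\chi(\Forb(P^+(k)))\ge k$.

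For the upper bound I would prove the general Gallai--Roy fact that every digraph $D$ admits a proper colouring using as many colours as the number of vertices on a longest directed path of $D$. Since any $D\in\Forb(P^+(k))$ has all its directed paths on at most $k$ vertices, this immediately yields $\chi(D)\le k$. To build the colouring, I would first select a spanning acyclic subdigraph $H$ of $D$ with a maximal set of arcs, so that adding any arc of $D\setminus H$ back to $H$ creates a directed cycle. Because $H$ is acyclic, I can set $c(v)$ to be the number of vertices on a longest directed path of $H$ ending at $v$; these values lie in $\{1,\dots,k\}$ since every directed path of $H$ is in particular a directed path of $D$.

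It then remains to verify that $c$ is a proper colouring of $D$, that is $c(u)\ne c(v)$ for every arc $uv$. If $uv\in H$, prolonging by $v$ a longest path of $H$ ending at $u$ gives $c(v)\ge c(u)+1$. If $uv\in D\setminus H$, maximality of $H$ provides a directed path $P$ from $v$ to $u$ in $H$; concatenating a longest path of $H$ ending at $v$ with $P$ yields a directed walk ending at $u$, from which I would read off $c(u)\ge c(v)+1$. The one point deserving care --- and the only real obstacle --- is to argue that this concatenation is genuinely a \emph{path}, with no repeated vertex, for otherwise the length count is invalid; this is exactly where the acyclicity of $H$ is used, since in an acyclic digraph any directed walk that repeated a vertex would close a directed cycle. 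Once this is secured, both cases give $c(u)\ne c(v)$, establishing $\chi(\Forb(P^+(k)))\le k$ and hence the equality.
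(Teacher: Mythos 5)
Your proof is correct: the lower bound via the transitive tournament $TT_k$ and the upper bound via a maximal spanning acyclic subdigraph $H$, colouring each vertex by the order of a longest directed path of $H$ ending there, is the standard argument for the Gallai--Roy--Hasse--Vitaver theorem, and you correctly identify and discharge the one delicate point (that the concatenated walk cannot repeat a vertex, by acyclicity of $H$). The paper itself offers no proof to compare against --- it states the result as a classical theorem and cites the original sources --- so there is nothing further to reconcile.
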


The chromatic number of the class of digraphs not containing a prescribed  oriented path with two blocks  ({\it blocks} are maximal directed subpaths) has been determined by Addario-Berry et al.~\cite{AHT07}.

\begin{theorem}[Addario-Berry et al.~\cite{AHT07}]\label{thm:2blocks}
Let $P$ be an oriented path with two blocks on $n$ vertices.
\begin{itemize}
\item If $n=3$, then $\chi(\Forb(P))=3$.
\item If $n\geq 4$, then $\chi(\Forb(P))=n-1$.
\end{itemize}
\end{theorem}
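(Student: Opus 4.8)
The statement splits into a lower bound (a $P$-free digraph of the claimed chromatic number) and an upper bound (a bound on $\chi$ of every $P$-free digraph), with the cases $n=3$ and $n\ge4$ behaving differently. For $n\ge4$ the lower bound is immediate: the transitive tournament on $n-1$ vertices has chromatic number $n-1$ (its underlying graph is $K_{n-1}$) and cannot contain $P$ as a subdigraph for the trivial reason that $P$ has $n>n-1$ vertices, so $\chi(\Forb(P))\ge n-1$. For $n=3$ the path $P=P(1,1)$ is, up to reversing all arcs, a single sink of in-degree $2$; the directed triangle has every vertex of in-degree $1$ and so contains no such configuration, while its chromatic number is $3$, giving $\chi(\Forb(P))\ge3$.

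\textbf{Upper bound for $n=3$.} A digraph avoiding $P(1,1)$ (in the in-degree-$2$ orientation) is exactly one in which every vertex has in-degree at most $1$, and the reverse orientation forces out-degree at most $1$. In either case every subdigraph on a set $S$ has at most $|S|$ arcs, hence its underlying graph has at most $|S|$ edges and a vertex of degree at most $2$. Thus such digraphs are $2$-degenerate and $3$-colourable, so $\chi(\Forb(P))\le3$ and we get equality.

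\textbf{Upper bound for $n\ge4$, the plan.} Here I must show every $P$-free digraph $D$ has $\chi(D)\le n-1=k_1+k_2$, where $k_1,k_2\ge1$ are the block lengths. Reversing all arcs preserves both $\chi$ and $P$-freeness and replaces $P$ by the two-block path with the same block multiset, so I may assume the vertex joining the two blocks is a sink; then $P$ is the union of two directed paths of lengths $k_1$ and $k_2$ sharing only their common terminal vertex. Suppose for contradiction $\chi(D)\ge k_1+k_2+1$. The plan is to locate a vertex $t$ carrying two internally disjoint directed in-paths of lengths $k_1$ and $k_2$. First I would invoke the Gallai--Roy Theorem~\ref{thm:gallairoy}: since $\chi(D)\ge k_1+k_2+1$, the level function $\ell(v)$ giving the length of a longest directed in-path to $v$ (made well defined by passing to a maximal acyclic spanning set of arcs, which also keeps the resulting colouring proper across every arc of $D$ and so disposes of directed cycles) is a proper colouring using at least $k_1+k_2+1$ colours. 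Hence some vertex $t$ has $\ell(t)\ge k_1+k_2$, and thus carries a directed in-path of length $k_1$; the goal is a second in-path of length $k_2$ into $t$ avoiding the interior of the first.

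\textbf{The main obstacle.} The hard part is exactly this \emph{vertex-disjointness}. A single long in-path into $t$ is harmless, since $P$-free digraphs can have arbitrarily long directed paths; moreover the regular bipartite example $S\to T$ is $P$-free with high degree but chromatic number $2$, which shows the conclusion cannot come from any degeneracy or local-density estimate, so the full strength of $\chi(D)\ge k_1+k_2+1$ must be used rather than minimum degree alone. I would force the second branch by taking $D$ to be a vertex-minimal counterexample, hence $(k_1+k_2+1)$-critical with minimum degree at least $k_1+k_2$ and with $D-v$ being $(k_1+k_2)$-colourable for every $v$, and then combine the long backbone in-path with the arcs guaranteed at $t$: choosing the length-$k_1$ in-path to reach back as little as possible frees the earlier part of the backbone (which has at least $k_2$ vertices) to host the second path, and a suitable cross- or back-arc into $t$ must exist, for otherwise an $(k_1+k_2)$-colouring of $D-t$ could be extended using the level function to colour $D$ with $k_1+k_2$ colours. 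I expect the remaining work to be a short case analysis on small values of $k_1,k_2$ and on how $t$'s neighbourhood meets the backbone, where the extremal choices above leave insufficient room and the disjointness must be argued directly.
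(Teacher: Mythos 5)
The paper does not prove Theorem~\ref{thm:2blocks}: it is quoted from Addario-Berry, Havet and Thomass\'e~\cite{AHT07}, so there is no internal proof to compare against. Judged on its own terms, your proposal correctly settles everything except the one part that constitutes the actual content of the theorem. The lower bounds (the transitive tournament on $n-1$ vertices for $n\geq 4$, the directed triangle for $n=3$) and the upper bound for $n=3$ (forbidding $P(1,1)$ forces in-degree or out-degree at most $1$, hence $2$-degeneracy of the underlying graph) are correct and complete.

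The gap is the upper bound for $n\geq 4$. Gallai--Roy gives you one directed in-path of length $k_1+k_2$ ending at some vertex $t$ inside a maximal acyclic spanning subdigraph, but the theorem is precisely the assertion that one can extract from $D$ two in-paths of lengths $k_1$ and $k_2$ meeting only at their common terminal vertex, and your text stops at ``a suitable cross- or back-arc into $t$ must exist, for otherwise a $(k_1+k_2)$-colouring of $D-t$ could be extended'' together with ``I expect the remaining work to be a short case analysis.'' Neither claim is substantiated. Criticality gives $t$ degree at least $k_1+k_2$, but nothing forces any in-neighbour of $t$ to lie on, or be reached by a suitably long dipath avoiding, the interior of the backbone; the second branch must have length exactly $k_2$ and be internally disjoint from the first, and a long in-path plus large degree at $t$ does not produce this (your own bipartite example shows that local density arguments fail, and the same caution applies to the argument you then sketch). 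The proof in~\cite{AHT07} is a substantial, multi-page argument involving strong components and a careful analysis of where the two branches may intersect; it is not a short case analysis, and no working version of it appears in the proposal. As it stands, the $n\geq 4$ upper bound --- the substance of the theorem --- is not proven.
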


In this paper, we are interested in the chromatic number of $\Forb({\cal H})$ when ${\cal H}$ is an infinite family of oriented cycles.
Let us denote by $\SForb(D)$ (resp. $\SForb({\cal D})$) the class of digraphs that contain no subdivision of $D$ (resp. any element of ${\cal D}$) as a subdigraph.
We are particularly interested in the chromatic number of $\SForb({\cal C})$, where ${\cal C}$ is a family of oriented cycles.



Let us denote by $\vec{C}_k$ the directed cycle of length $k$.
For all $k$,  $\chi(\SForb(\vec{C}_k))=+\infty$ because transitive tournaments have no directed cycle.
 Let us denote by $C(k,\ell)$ the oriented cycle with two blocks, one of length $k$ and the other of length $\ell$. Observe that the oriented cycles with two blocks are the subdivisions of $C(1,1)$. As pointed Gy\'arf\'as and Thomassen (see \cite{AHT07}), there are acyclic oriented graphs with arbitrarily large chromatic number and no oriented cycles with two blocks. Therefore $\chi(\SForb(C(k,\ell))) =+\infty$.
We first generalize these two results to every oriented cycle.

\begin{theorem}\label{thm:infty1}
For any oriented cycle $C$,
$$\chi(\SForb(C))=+\infty.$$
\end{theorem}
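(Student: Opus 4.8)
\emph{Proof plan.} The plan is to separate the directed case from the non-directed one and, in both cases, to exhibit acyclic digraphs of unbounded chromatic number that structurally cannot contain any subdivision of $C$. The key observation I would build on is that the number of \emph{blocks} of an oriented cycle is invariant under subdivision: subdividing an arc replaces it by a directed path of the same orientation, so each block of $C$ becomes a (longer) block while every source and every sink of $C$ stays a source or a sink. Hence every subdivision of $C$ is an oriented cycle with exactly as many blocks as $C$, say $b$ of them. Consequently, any digraph all of whose oriented cycles have a number of blocks different from $b$ lies in $\SForb(C)$, and it is this that I would aim to realise together with large chromatic number.

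First I would dispose of the case where $C$ is a directed cycle (the case $\vec{C}_k$ already recorded in the excerpt): a transitive tournament on $k$ vertices is acyclic, hence contains no directed cycle at all, and a fortiori no subdivision of $C$, while its chromatic number is $k$. So assume from now on that $C$ is not directed, in which case $b\ge 2$. Since an acyclic digraph contains no directed cycle, each of its oriented cycles has at least two blocks, and it therefore suffices to prove the following lemma, which generalises the construction of Gy\'arf\'as and Thomassen (the case $t=2$, namely acyclic digraphs with no cycle with two blocks, invoked in the excerpt for $C(k,\ell)$): \emph{for every $t$ and every $k$ there is an acyclic digraph of chromatic number at least $k$ in which every oriented cycle has more than $t$ blocks.} Applied with $t=b$, this produces, for each $k$, a digraph of chromatic number at least $k$ whose cycles all have more than $b$ (hence never exactly $b$) blocks, and such a digraph belongs to $\SForb(C)$; this proves the theorem.

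To prove the lemma I would fix $t$ and induct on $k$, extending the Gy\'arf\'as--Thomassen construction. Given an acyclic digraph $D'$ of chromatic number $k-1$ in which every oriented cycle has more than $t$ blocks, I would build $D$ from many disjoint copies of $D'$ together with a directed ``spine'', linking them by sufficiently long directed paths in a level-respecting (hence acyclic) way, arranged so that (i) a pigeonhole/colouring argument along the spine forces $\chi(D)\ge k$, and (ii) every oriented cycle either lies inside a single copy, where it has more than $t$ blocks by induction, or else leaves a copy and must travel through the connecting structure, which is designed so that closing the cycle forces enough additional direction reversals to again exceed $t$ blocks. The hard part will be exactly point (ii): one must choose the interconnections and all length and multiplicity parameters so that \emph{no} way of routing a cycle through several copies can close up with few blocks, while at the same time guaranteeing the increase in chromatic number. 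By contrast, the invariance of the block count under subdivision and the two reductions above are straightforward.
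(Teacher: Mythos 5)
Your reduction is exactly the one the paper uses: the number of blocks of an oriented cycle is preserved by subdivision, so it suffices to produce, for every $b$ and every $c$, an acyclic digraph of chromatic number at least $c$ all of whose oriented cycles have more than $b$ blocks (this is the paper's Theorem~\ref{thm:blocks}, and acyclicity disposes of directed $C$ exactly as you say). The gap is that your proof of this key lemma is only a sketch, and the sketch as written would not work. First, ``linking the copies by sufficiently long directed paths'' cannot help with the block count: a directed path of any length is a single block, so length buys you nothing --- what forces blocks is direction reversals, and you need a mechanism guaranteeing that \emph{every} cycle meeting several copies passes through many reversal points. Second, a ``spine plus pigeonhole'' gives no reason why the chromatic number should go up: to force $\chi(D)\geq c+1$ you must arrange that for \emph{every} proper $c$-colouring of the disjoint union of copies, some added vertex sees all $c$ colours in its in-neighbourhood, and nothing in your description achieves this.

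The paper resolves both difficulties at once with an ingredient your plan does not contain: a $(c\times p)$-uniform hypergraph of girth greater than $b/2$ and weak chromatic number greater than $p$ (Erd\H{o}s--Lov\'asz, Theorem~\ref{thm:alonisgreat}), where $p$ is the number of proper $c$-colourings of the current digraph $D_c$. One copy of $D_c$ is placed on each ground-set element; for each hyperedge $S$ and each index $i\leq p$ a new \emph{sink} $w_{S,i}$ is added whose $c$ in-neighbours realise all $c$ colours of the $i$-th colouring in $c$ distinct copies. Each sink visited by a cycle contributes two blocks, and a cycle visiting $b'$ sinks projects to a closed walk containing a cycle of length $b'$ in the hypergraph, so the girth condition forces $b'>b/2$, i.e.\ more than $b$ blocks; meanwhile any proper $c$-colouring of the new digraph induces a $p$-colouring of the ground set, the weak chromatic number produces a monochromatic hyperedge, and the corresponding $w_{S,i}$ sees all $c$ colours --- a contradiction. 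Without this (or an equivalent) construction the lemma, which is the entire content of the theorem, remains unproved; you correctly flagged point (ii) as the hard part, but it is precisely the part that requires a new idea rather than tuning of parameters.
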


In fact, we show a stronger theorem (Theorem~\ref{thm:blocks}): for any positive integer $b$, there are digraphs of arbitrarily high chromatic number that contains no oriented cycles with less than $b$ blocks. It directly implies the following generalization of the previous theorem.

\begin{theorem}\label{thm:infty2}
For any finite family ${\cal C}$ of oriented cycles,
$$\chi(\SForb({\cal C}))=+\infty.$$
\end{theorem}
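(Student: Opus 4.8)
The plan is to derive this as the promised direct consequence of Theorem~\ref{thm:blocks}, which supplies, for every positive integer $b$, digraphs of arbitrarily large chromatic number containing no oriented cycle with fewer than $b$ blocks. The bridge between the two statements is the elementary observation that subdividing an arc of an oriented cycle leaves its number of blocks unchanged: replacing an arc by a directed path oriented the same way neither creates nor destroys a source or a sink, and the number of blocks of an oriented cycle is exactly the number of its sources plus the number of its sinks. Consequently, every subdivision of an oriented cycle $C$ is again an oriented cycle with the same number of blocks as $C$.

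With this in hand, I would argue as follows. Write ${\cal C} = \{C_1,\dots,C_m\}$ and let $b_i$ be the number of blocks of $C_i$; since ${\cal C}$ is finite, $B := 1 + \max_{1\le i\le m} b_i$ is a well-defined positive integer. Applying Theorem~\ref{thm:blocks} with parameter $B$ yields digraphs $D$ of arbitrarily large chromatic number that contain no oriented cycle with fewer than $B$ blocks. I then claim each such $D$ lies in $\SForb({\cal C})$. Indeed, if $D$ contained a subdivision $C'$ of some $C_i$ as a subdigraph, then by the observation above $C'$ would be an oriented cycle with exactly $b_i \le B-1 < B$ blocks sitting inside $D$, contradicting the defining property of $D$. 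Hence $D \in \SForb({\cal C})$ for digraphs $D$ of unbounded chromatic number, which gives $\chi(\SForb({\cal C})) = +\infty$.

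The argument is short precisely because all the work is shouldered by Theorem~\ref{thm:blocks}; the only genuine point to nail down is the invariance of the block count under subdivision, and in particular the degenerate case in which some $C_i$ is a directed cycle $\vec{C}_k$ (for which the notion of \emph{block} must be read consistently with the hypothesis of Theorem~\ref{thm:blocks}). I would settle this by checking that the digraphs produced by Theorem~\ref{thm:blocks} contain no directed cycle at all, so that directed cycles in ${\cal C}$ and their subdivisions are automatically excluded, rather than relying on a block-counting convention for $\vec{C}_k$. Beyond this bookkeeping there is no real obstacle: the finiteness of ${\cal C}$ is used only to bound $\max_i b_i$, which is exactly what allows a single value of $b$ to handle the whole family at once.
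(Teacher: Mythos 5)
Your proposal is correct and follows exactly the paper's route: the paper derives Theorem~\ref{thm:infty2} from Theorem~\ref{thm:blocks} in one line, "since a cycle and its subdivision have the same number of blocks," which is precisely the invariance you verify before taking $b$ larger than the maximum block count over the finite family. Your extra care about directed cycles (handled via acyclicity of the construction) is a reasonable way to sidestep the block-counting convention, but otherwise the argument is the same.
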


In contrast, if ${\cal C}$ is an infinite family of oriented cycles, $\SForb({\cal C})$ may have bounded chromatic number. By the above argument, such a family must contain a cycle with at least $b$ blocks for every positive integer $b$.
A cycle $C$ is {\it antidirected} if any vertex of $C$ has either in-degree $2$ or out-degree $2$ in $C$. In other words, it is an oriented cycle in which all blocks have length $1$.
Let us denote by ${\cal A}_{\geq 2k}$ the family of antidirected cycles of length at least $2k$.
In Theorem~\ref{thm:antidirected}, we prove that $\chi(\Forb({\cal A}_{\geq 2k}))\leq 8k-8$.
Hence we are left with the following problem.

\begin{problem}\label{prob:infini}
What are the infinite families of oriented cycles ${\cal C}$ such that $\Forb({\cal C})<+\infty$ ?\\
What are the infinite families of oriented cycles ${\cal C}$ such that $\SForb({\cal C})<+\infty$ ?
\end{problem}

On the other hand, considering strongly connected (strong for short) digraphs may lead to dramatically different result.
An example is provided by the following celebrated result due to Bondy~\cite{Bon76} : {\it  every strong digraph of chromatic number at least $k$ contains a directed cycle of length at least $k$.}
Denoting the class of strong digraphs by ${\cal S}$, this result can be rephrased as follows.

\begin{theorem}[Bondy~\cite{Bon76}]\label{thm:bondy}
$\chi(\SForb(\vec{C}_k)\cap {\cal S}) =k-1$.
 \end{theorem}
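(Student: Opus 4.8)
The plan is to read the statement as a two-sided estimate and to translate the subdivision condition into a bound on the length of directed cycles. Since any subdivision of the directed cycle $\vec{C}_k$ is again a directed cycle, of length at least $k$, the class $\SForb(\vec{C}_k)$ is exactly the class of digraphs in which every directed cycle has length at most $k-1$, i.e.\ of \emph{circumference} at most $k-1$. Hence $\SForb(\vec{C}_k)\cap{\cal S}$ is the class of strong digraphs of circumference at most $k-1$, and the identity $\chi(\SForb(\vec{C}_k)\cap{\cal S})=k-1$ splits into an upper bound (every strong digraph of circumference at most $k-1$ is $(k-1)$-colourable) and a matching lower bound (some such digraph needs $k-1$ colours).

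For the lower bound I would exhibit a single explicit witness. Let $\overleftrightarrow{K_{k-1}}$ be the digraph on $k-1$ vertices obtained by placing both arcs $uv$ and $vu$ between every pair of distinct vertices. Its underlying graph is $K_{k-1}$, so $\chi(\overleftrightarrow{K_{k-1}})=k-1$; it is plainly strong; and since it has only $k-1$ vertices, no directed cycle can have length $\geq k$, so it contains no subdivision of $\vec{C}_k$. Thus $\overleftrightarrow{K_{k-1}}\in\SForb(\vec{C}_k)\cap{\cal S}$, giving $\chi(\SForb(\vec{C}_k)\cap{\cal S})\geq k-1$.

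The upper bound is the substantive half and is exactly Bondy's theorem in contrapositive form: every strong digraph $D$ contains a directed cycle of length at least $\chi(D)$. Granting this, a digraph $D\in\SForb(\vec{C}_k)\cap{\cal S}$ has circumference at most $k-1$, so $\chi(D)\leq k-1$, which yields $\chi(\SForb(\vec{C}_k)\cap{\cal S})\leq k-1$ and closes the argument. So the entire weight of the proof rests on producing, inside a strong digraph, a directed cycle at least as long as the chromatic number, equivalently a proper colouring using at most (circumference)-many colours.

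This colouring step is where I expect the real difficulty, and it is worth flagging why the machinery already available does not suffice. It is tempting to imitate the Gallai--Roy Theorem (Theorem~\ref{thm:gallairoy}) by fixing a maximal acyclic subdigraph and colouring each vertex with the length of a longest directed path of that subdigraph ending at it; this is indeed proper, but the number of colours it spends is governed by the longest directed \emph{path}, which can be far larger than the circumference (doubling every edge of a long undirected path into a digon gives chromatic number $2$ while creating arbitrarily long directed paths), so it yields no control by the cycle length. Nor can one fall back on the Pósa-type rotation that rescues the undirected circumference bound, since a directed subpath cannot be reversed. The heart of the matter, and of Bondy's argument~\cite{Bon76}, is therefore to use strong connectivity itself to manufacture a directed cycle whose length is at least the number of colour classes: starting from a vertex of extremal colour one would follow a closed directed walk that meets every colour value, while controlling its re-entry points so that a sufficiently long genuine cycle is contained in it. Carrying out that control, i.e.\ preventing the returning path from short-cutting the path it is meant to close, is the single hard step; once a directed cycle of length at least $\chi(D)$ is in hand, both inequalities above follow immediately.
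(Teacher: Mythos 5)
The paper does not prove this statement: it presents it as a direct rephrasing of Bondy's cited theorem (``every strong digraph of chromatic number at least $k$ contains a directed cycle of length at least $k$''), so there is no internal proof to compare against. Within that framing, your reduction is the right one and matches the paper's reading exactly: since the subdivisions of $\vec{C}_k$ are precisely the directed cycles of length at least $k$, membership in $\SForb(\vec{C}_k)$ is equivalent to having circumference at most $k-1$, the upper bound is literally Bondy's theorem in contrapositive, and the lower bound needs a witness. Your witness, the complete biorientation of $K_{k-1}$, is valid: it is strong, its underlying graph is $K_{k-1}$ so its chromatic number is $k-1$, and having only $k-1$ vertices it has no directed cycle of length $k$ or more (note the paper allows digons here, so this digraph is admissible). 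This tightness half is something the paper leaves implicit, so you have actually supplied slightly more than the text does.

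The gap is the one you flag yourself: the substantive half of the statement \emph{is} Bondy's theorem, so ``granting this'' grants the theorem, and your closing paragraph is a discussion of why the colouring step is delicate rather than an execution of it. The sketch of ``following a closed directed walk that meets every colour value, while controlling its re-entry points'' is not yet an argument; the known proofs proceed differently (Bondy's original argument analyses the bridges of a longest directed cycle, and a common modern route uses a handle decomposition of a strong digraph built from a longest cycle, as in the machinery of Section~\ref{sec:2blocks-small}, to colour $D$ with circumference-many colours). If the intent is to rely on the citation \cite{Bon76} for that half, as the paper does, your write-up is complete and correct; if a self-contained proof is intended, the upper bound remains unproven and is exactly the missing piece.
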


Inspired by this theorem, Addario-Berry et al.~\cite{AHT07} posed the following problem.
\begin{problem}
Let $k$ and $\ell$ be two positive integers. Does
$\SForb(C(k,\ell)\cap {\cal S})$ have bounded chromatic number?
\end{problem}
In Subsection~\ref{subsec:2blocks-gen}, we answer to this problem in the affirmative.
In Theorem~\ref{thm:Ckk} we prove 
$$\chi(\SForb(C(k,\ell)\cap {\cal S}) \leq \borne, \mbox{ for all }  k\geq \ell\geq 2, k\geq 3.$$

Note that since $\chi(\SForb(C(k',\ell')\cap {\cal S}) \leq \chi(\SForb(C(k,\ell)\cap {\cal S})$ if $k'\leq k$ and $\ell'\leq \ell$, this gives also an upper bound when $k$ or $\ell$ are small. However, in those cases, we prove better upper bounds.
In Corollary~\ref{cor:k1}, we prove 
$$\chi(\SForb(C(k,1))\cap {\cal S})\leq \max\{k+1, 2k-4\} \mbox{ for all } k.$$
We also give in Subsection~\ref{subsec:2blocks-gen} the exact value of $\SForb(C(k,\ell)\cap {\cal S})$ for $(k,\ell)\in \{(1,2), (1,3), (2,3)\}$.


\medskip

More generally, one may wonder what happens for other oriented cycles.
\begin{problem}
Let $C$ be an oriented cycle with at least four blocks.
Is $\chi(\SForb(C)\cap {\cal S})$ bounded?
\end{problem}

In Section~\ref{sec:4block}, we show that $\chi(\SForb(\hat{C}_4)\cap {\cal S}) \leq 24$ where $\hat{C}_4$ is the antidirected cycle of order $4$.

\section{Definitions}

We follow~\cite{BoMu08} for basic notions and notations.
Let $D$ be a digraph. $V(D)$ denotes its vertex-set and $A(D)$ its arc-set.

If $uv \in A(D)$ is an arc, we sometimes write $u\ra v$ or $v\la u$.

For any $v \in V(D)$, $d^+(v)$ (resp. $d^-(v)$) denotes the out-degree (resp. in-degree) of $v$.  $\delta^+(D)$ (resp. $\delta^-(D)$) denotes the minimum out-degree (resp. in-degree) of $D$.


 An \emph{oriented path} is any orientation of a \emph{path}. The \emph{length} of a path is the number
	of its arcs. Let $P = (v_1, \ldots , v_n)$ be an oriented path. If $v_iv_{i+1} \in A(D)$, then $v_i
	v_{i+1}$ is a \emph{forward arc}; otherwise, $v_{i+1}v_i$ is a \emph{backward arc}. $P$ is a \emph{directed
	path} if all of its arcs are either forward or backward ones. For convenience, a directed path with forward arcs only is called a {\it dipath}.
	A \emph{block} of $P$ is a maximal
	directed subpath of $P$. A path is entirely determined by the sequence $(b_1, \dots , b_p)$ of the lengths of its blocks and the sign $+$ or $-$ indicating if the first arc is forward or backward respectively. Therefore we denote by $P^+(b_1, \ldots, b_p)$ (resp. $P^-(b_1, \ldots, b_p)$) an oriented path whose first arc is forward (resp. backward) with $p$ blocks, such that the $i$th block along it has length $b_i$.

	Let $P=(x_1,x_2,\dots ,x_n)$ be an oriented path.
We say that $P$ is an {\it $(x_1,x_n)$-path}.
For every $1 \leq i \leq j \leq n$, we note $P[x_i,x_j]$ (resp. $P]x_i,x_j[$, $P[x_i,x_j[$, $P]x_i,x_j]$) the oriented subpath $(x_i, \dots, x_j)$ (resp.
$(x_{i+1}, \dots, x_{j-1})$, $(x_{i}, \dots, x_{j-1})$, $(x_{i+1}, \dots, x_{j})$).

The vertex $x_1$ is the {\it initial vertex} of $P$ and $x_n$ its {\it terminal vertex}.
Let $P_1$ be an $(x_1,x_2)$-dipath and $P_2$ an $(x_2,x_3)$-dipath which are disjoint except in $x_2$.
Then $P_1 \odot P_2$ denotes the $(x_1,x_3)$-dipath obtained from the concatenation of these dipaths.

The above definitions and notations can also be used for oriented cycles. Since a cycle has no initial and terminal vertex, we have to choose one as well as a direction to run through the cycle.Therefore if $C=(x_1,x_2, \dots , x_n,x_1)$ is an oriented cycle, we always assume that $x_1x_2$ is an arc, and if $C$ is not directed that $x_1x_n$ is also an arc.

A path or a cycle (not necessarily directed) is {\it Hamiltonian} in a digraph if it goes through all vertices of $D$.


The digraph $D$ is {\it connected} (resp.\ {\it $k$-connected}) if its underlying graph is connected (resp.\ $k$-connected).
It is {\it strongly connected}, or {\it strong}, if for any two vertices $u,v$, there is a $(u,v)$-dipath in $D$.
It is {\it k-strongly connected} or {\it k-strong}, if for any set $S$ of $k-1$ vertices $D - S$ is strong.
A {\it strong component} of a digraph is an inclusionwise maximal strong subdigraph.
Similarly, a {\it $k$-connected component} of a digraph is an inclusionwise maximal $k$-connected subdigraph.


\section{Antidirected cycles}\label{sec:antidirect}

The aim of this section is to prove the following theorem, that establish that $\chi(\Forb({\cal A}_{\geq 2k}))\leq 8k-8$.
\begin{theorem}\label{thm:antidirected}
Let $D$ be an oriented graph and $k$ an integer greater than $1$.\\
If $\chi(D)\geq 8k-7$, then $D$ contains an antidirected cycle of length at least $2k$.
\end{theorem}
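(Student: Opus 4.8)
The plan is to exploit the fact that an antidirected cycle is essentially a bipartite object. If we split the vertices into a part $S$ of \emph{sources} and a part $T$ of \emph{sinks}, then an antidirected cycle of length $2m$ is exactly a cycle of length $2m$ in the bipartite graph $B$ whose edges are the arcs directed from $S$ to $T$: consecutive vertices along the cycle alternate between $S$ and $T$, each $S$-vertex sends both incident arcs forward and each $T$-vertex receives both. It is worth noting that one cannot hope to first pass to a subdigraph with large minimum in- and out-degree, since transitive tournaments have unbounded chromatic number yet are acyclic and so contain no such subdigraph; the whole point is that antidirected cycles survive in acyclic digraphs, so the argument must use the bipartite structure directly rather than semidegree cleaning.

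First I would reduce to large degree. Since $\chi(D)\ge 8k-7$, the underlying graph is not $(8k-9)$-degenerate, so $D$ contains a subdigraph $D_1$ whose underlying graph has minimum degree at least $8k-8$. As $D_1$ is an oriented graph, writing $n_1=|V(D_1)|$ and $m_1=|A(D_1)|$, we have $2m_1=\sum_v\bigl(d^+(v)+d^-(v)\bigr)\ge(8k-8)n_1$, hence $m_1\ge(4k-4)n_1$. Next I would choose a partition $(S,T)$ of $V(D_1)$ maximizing the number $e(S\to T)$ of arcs with tail in $S$ and head in $T$, and let $B$ be the associated bipartite graph. The average of $e(S\to T)$ over all $2^{n_1}$ partitions equals $m_1/4$ (each arc is counted with probability $1/4$); this average is positive while the partition $S=\varnothing$ gives value $0$, so the maximum strictly exceeds the average. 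Thus $e(S\to T)>m_1/4\ge(k-1)n_1$, and being an integer, $e(S\to T)\ge(k-1)n_1+1$.

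Now $B$ has at most $n_1$ vertices and more than $(k-1)n_1$ edges, so it is not $(k-1)$-degenerate; hence $B$ contains a nonempty subgraph $B'$ of minimum degree at least $k$. Since $B'$ is bipartite, a longest-path argument finishes the job: taking a longest path, all neighbours of an endpoint lie on it and occur at distinct odd positions, so the $k$-th of them sits at distance at least $2k-1$, giving a cycle of length at least $2k$. This cycle alternates between $S$ and $T$ and uses only arcs directed from $S$ to $T$; therefore every $S$-vertex on it has out-degree $2$ and every $T$-vertex has in-degree $2$, i.e. it is an antidirected cycle of length at least $2k$ in $D$, as desired.

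The step I expect to be the main obstacle is the counting in the third paragraph, which is tight: the factor $8$ in the hypothesis is exactly what makes the maximum directed cut reach $(k-1)n_1$, and the essential extra gain needed to activate the $k$-core comes precisely from the strict inequality \emph{maximum $>$ average}. The other point requiring care is the faithful translation between cycles of $B$ and antidirected cycles of $D$: because $S$ and $T$ are disjoint and a cycle of $B'$ visits distinct vertices alternating between the two sides, the resulting object is automatically a genuine vertex-simple antidirected cycle, with the correct alternation of in- and out-degree $2$.
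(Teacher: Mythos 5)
Your proof is correct and follows essentially the same route as the paper's: both pass to a subdigraph of minimum degree $8k-8$ (you via degeneracy, the paper via an $(8k-7)$-critical subgraph), extract a directed cut with at least $(k-1)n$ arcs by the averaging/maximum-cut argument, find a bipartite subgraph of minimum degree $k$ by a second degeneracy step, and close with the longest-path argument to produce a cycle of length at least $2k$ that is automatically antidirected. The only cosmetic difference is your strict inequality from ``maximum exceeds average,'' which the paper does not need since non-strict bounds already suffice for its degree lemma.
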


A graph $G$ is {\it $k$-critical} if $\chi(G)=k$ and $\chi(H)<k$ for any proper subgraph $H$ of $G$.
Every graph with chromatic number $k$ contains a $k$-critical graph.
We denote by $\delta(G)$ the minimum degree of the graph $G$.
The following easy result is well-known.
	\begin{proposition}\label{prop:dkcrit}
	  If $G$ is a $k$-critical graph, then $\delta(G) \geq k - 1$.
	\end{proposition}

Let $(A,B)$ be a bipartition of the vertex set of a digraph $D$.
We denote by $E(A,B)$ the set of arcs with tail in $A$ and head in $B$ and by $e(A,B)$ its cardinality.

\begin{lemma}[Burr~\cite{Bur82}]\label{maxcut}\rm
Every digraph $D$ contains a partition $(A,B)$ such that
$e(A,B)\geq |E(D)|/4$.
\end{lemma}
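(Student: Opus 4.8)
The plan is to use the first-moment method. I would construct the bipartition at random and show that the expected number of arcs directed from $A$ to $B$ is exactly $|E(D)|/4$; the existence of a partition meeting the bound then follows immediately, since a random variable must attain at least its mean with positive probability.

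Concretely, first I would assign each vertex $v \in V(D)$ independently to $A$ or to $B$, each with probability $1/2$, so that $(A,B)$ is a random bipartition of $V(D)$. For each arc $a = uv \in A(D)$, with tail $u$ and head $v$, let $X_a$ be the indicator of the event $\{u \in A \et v \in B\}$. Because the placements of $u$ and $v$ are independent and each lands in a prescribed part with probability $1/2$, we have $\Pee[X_a = 1] = 1/4$, hence $\E[X_a] = 1/4$.

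Next, writing $e(A,B) = \sum_{a \in A(D)} X_a$, linearity of expectation yields $\E[e(A,B)] = \sum_{a \in A(D)} \E[X_a] = |E(D)|/4$. Since a real-valued random variable cannot be strictly below its expectation for every outcome, there is a choice of the random placement — that is, an actual bipartition $(A,B)$ — for which $e(A,B) \geq |E(D)|/4$, as claimed.

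There is essentially no obstacle here; the one point worth flagging is that the arcs are directed, so only the single orientation (tail in $A$, head in $B$) is counted, which is precisely why the per-arc probability is $1/4$ rather than the $1/2$ one obtains for an ordinary undirected cut. Should a constructive version be desired, the same computation derandomizes through the method of conditional expectations: one places the vertices one at a time, at each step choosing the side that does not decrease the conditional expected value of $e(A,B)$, which guarantees a final partition with $e(A,B) \geq |E(D)|/4$.
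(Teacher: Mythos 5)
Your argument is correct: the random bipartition gives each arc probability $1/4$ of being oriented from $A$ to $B$, so the expectation is $|E(D)|/4$ and some outcome attains at least the mean. The paper states this lemma without proof, citing Burr, so there is no in-paper argument to compare against; your first-moment proof (and the conditional-expectation derandomization you mention) is the standard and entirely adequate justification.
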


\begin{lemma}[Burr~\cite{Bur82}]\label{degremin}
Let $G$ be a bipartite graph and $p$ be an integer.
If $|E(G)|\geq p|V(G)|$, then $G$ has a subgraph with minimum degree at least $p+1$.
\end{lemma}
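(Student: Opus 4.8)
The plan is to prove this by a standard \emph{peeling} (degeneracy) argument, which in fact does not use the bipartiteness of $G$ at all. I would repeatedly delete from $G$ a vertex whose degree in the \emph{current} graph is at most $p$, stopping as soon as no such vertex remains. Let $H$ be the graph obtained when the process halts. If $H$ is nonempty, then by construction every vertex of $H$ has degree at least $p+1$ in $H$, so $\delta(H)\geq p+1$ and $H$ is the desired subgraph. Thus the whole content is to show that the process cannot delete every vertex, i.e.\ that $H\neq\emptyset$.

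To this end I would argue by contradiction, assuming the process empties the graph, and count edges. Let $v_1,\dots,v_n$ be the vertices of $G$ listed in the order in which they are deleted, where $n=|V(G)|$, and let $d_i$ be the degree of $v_i$ in the graph that remains just before $v_i$ is deleted. Charging each edge to its endpoint that is deleted first shows that
\[
|E(G)| \;=\; \sum_{i=1}^{n} d_i .
\]
By the rule of the process we have $d_i\leq p$ for every $i$, while the very last vertex satisfies $d_n=0$ since nothing remains after it. Hence $|E(G)|\leq p(n-1)=p\,|V(G)|-p$, and for $p\geq 1$ this contradicts the hypothesis $|E(G)|\geq p\,|V(G)|$. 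Therefore the process stops at a nonempty $H$, completing the proof.

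The only delicate point, and the step I would watch most carefully, is that the hypothesis is a \emph{non-strict} inequality $|E(G)|\geq p\,|V(G)|$: a crude bound of $|E(G)|\leq pn$ would not suffice to reach a contradiction. The gain of exactly the one factor of $p$ needed to close the argument comes precisely from observing that the last deleted vertex is isolated at the moment of its deletion, contributing $0$ rather than $p$ to the sum. One should also keep in mind the harmless degenerate cases: the statement is intended for $p\geq 1$ and a nonempty $G$, under which $|E(G)|\geq p\,|V(G)|\geq p\geq 1$ already forces $G$ to contain an edge, so that $n\geq 2$ and the counting goes through.
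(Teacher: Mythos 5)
Your proof is correct. The paper itself gives no argument for this lemma --- it is quoted from Burr's paper \cite{Bur82} and used as a black box --- so there is nothing internal to compare against; your peeling/degeneracy argument is the standard proof of this fact. You have also handled the one genuinely delicate point correctly: since the hypothesis $|E(G)|\geq p|V(G)|$ is non-strict, the crude bound $\sum_i d_i\leq pn$ does not close the contradiction, and you recover the needed slack by noting that the last deleted vertex contributes $0$. Your observations that bipartiteness is irrelevant and that one should read the statement with $p\geq 1$ and $G$ nonempty (as in its application with $p=k-1$, $k\geq 2$) are both accurate.
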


\begin{lemma}\label{lemma:long-cycle}
Let $k\geq 1$ be an integer.
Every bipartite graph with minimum degree $k$ contains a cycle of order at least $2k$.
\end{lemma}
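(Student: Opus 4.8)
The plan is to use the classic longest-path argument, tailored to exploit bipartiteness through a parity observation. First I would take a longest path $P = (v_0, v_1, \ldots, v_\ell)$ in the graph $G$ (such a path exists and lives inside a single connected component). The defining feature of a longest path is that the endpoint $v_0$ cannot have any neighbor outside $P$: otherwise we could extend $P$, contradicting maximality. Hence every one of the at least $k$ neighbors of $v_0$ (using $\delta(G) \geq k$) occurs among $v_1, \ldots, v_\ell$.

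The key step, and the place where bipartiteness enters, is a parity argument. Say $v_0 \in A$, where $(A,B)$ is the bipartition. Since $G$ is bipartite, a vertex $v_i$ of $P$ lies in $B$ if and only if its index $i$ is odd (as consecutive vertices alternate between the parts). A neighbor of $v_0 \in A$ must lie in $B$, so every neighbor of $v_0$ on $P$ has an \emph{odd} index. Thus the neighbors of $v_0$ occupy at least $k$ distinct odd positions among $1, 3, 5, \ldots$. The $k$-th smallest odd positive integer is $2k-1$, so if $v_j$ denotes the neighbor of $v_0$ of largest index, then $j \geq 2k - 1$.

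Finally I would close the cycle: the segment $(v_0, v_1, \ldots, v_j)$ of $P$ together with the arc between $v_j$ and $v_0$ forms a cycle whose number of vertices is $j + 1 \geq 2k$, as required. The one point that needs care — and which I regard as the main (though modest) obstacle — is getting the parity counting exactly right, namely justifying that $k$ distinct odd indices force the maximum to be at least $2k-1$ (rather than the weaker bound $k$ one would get in a non-bipartite graph). Everything else is a routine consequence of maximality of the path and the minimum-degree hypothesis.
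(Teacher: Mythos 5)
Your proof is correct and follows essentially the same route as the paper's: take a longest path, observe that all neighbours of an endpoint lie on it, and use bipartiteness to conclude that the cycle closed off at the furthest neighbour has length at least $2k$ (the paper counts the $k$ neighbours lying in one side of the bipartition, while you count their odd indices along the path — the same observation in different clothing). No gap.
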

\begin{proof}
Let $G$ be a bipartite graph with bipartition $(A,B)$.
Consider a longest path $P$ in $G$.
Without loss of generality, we may assume that one of its ends $a$ is in $A$.
All neighbours of $a$ are in $P$ (otherwise $P$ can be lengthened). Let $b$ be the furthest neighbour of $a$ in $B$ along $P$.
Then $C=P[a,b]\cup ab$ is a cycle containing at least $k$ vertices in $B$, namely the neighbours of $a$.
Hence $C$ has length at least $2k$, since $G$ is bipartite.
\end{proof}

\begin{proof}[Proof of Theorem~\ref{thm:antidirected}]
It suffices to prove that every $(8k-7)$-critical oriented graph contains an antidirected cycle of length at least $2k$.

Let $D$ be a $(8k-7)$-critical oriented graph.
By Proposition~\ref{prop:dkcrit}, it has minimum degree at least $8k-8$, so $|E(D)|\geq (4k-4) |V(D)|$.
By Lemma~\ref{maxcut}, $D$ contains a partition such that $e(A,B)\geq |E(D)|/4\geq (k-1) |V(D)|$.
Consequently, by Lemma~\ref{degremin}, there are two sets $A'\subseteq A$ and $B'\subseteq B$ such that every vertex in $A'$ (resp. $B'$) has at least $k$ out-neighbours in $B'$ (resp. $k$ in-neighbours in $A'$).
Therefore, by Lemma~\ref{lemma:long-cycle}, the bipartite oriented graph induced by $E(A',B')$ contains a cycle of length at least $2k$, which is necessarily antidirected.
\end{proof}

\begin{problem}
Let $\ell$ be an even integer.
What the minimum integer $a(\ell)$ such that every oriented graph with chromatic number at least $a(\ell)$ contains an antidirected cycle of length at least $\ell$ ?
\end{problem}

\section{Acyclic digraphs without cycles with few blocks}

The aim of this section is to establish Theorems~\ref{thm:infty1} and~\ref{thm:infty2}.
To do so we will use a result on hypergraph colouring.

\medskip

A {\it cycle} of {\it length} $k\geq 2$ in a hypergraph ${\cal H}$ is an
alternating cyclic sequence $e_0, v_0, e_1, v_1,\dots$ $ e_{k-1}, v_{k-1}, e_0$ of
distinct hyperedges and vertices in ${\cal H}$ such that $v_i\in e_i\cap
e_{i+1}$ for all $i$ modulo $k$.  The {\it girth} of a hypergraph is the length
of a shortest cycle.

A hypergraph $\mathcal H$ on a ground set $X$ is said to be {\it weakly
  $c$-colourable} if there exists a colouring of the elements of $X$ with $c$
colours such that no hyperedge of $\mathcal H$ is monochromatic. The {\it weak
  chromatic number} of $\mathcal H$ is the least $c$ such that $\mathcal H$ is
weakly $c$-colourable. Erd{\H{o}}s and Lov{\'a}sz~\cite{EL75} (and more recently
Alon {\em et al.}\cite{alon2014coloring}) proved the following result:

\begin{theorem}\cite[Theorem 1']{EL75}, \cite{alon2014coloring}
  \label{thm:alonisgreat}
  For $k, g, c\in\mathbb{N}$, there exists a $k$-uniform hypergraph with girth
  larger than $g$ and weak chromatic number larger than $c$.
\end{theorem}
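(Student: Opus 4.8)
The plan is to prove this via the probabilistic deletion (alteration) method, which is the standard route for producing hypergraphs that are simultaneously locally sparse (large girth) and globally dense (large weak chromatic number). Fix $k$, $g$, $c$. The key reduction is to observe that in a weak colouring no colour class contains an entire hyperedge, so each colour class is an \emph{independent set} (a set containing no hyperedge); hence a weak $c$-colouring is exactly a partition of the vertex set into $c$ independent sets. Consequently, if a $k$-uniform hypergraph on $m$ vertices has independence number $\alpha$, then its weak chromatic number is at least $m/\alpha$. Thus it suffices to build a hypergraph of girth $>g$ on some number $m$ of vertices whose independence number is smaller than $m/c$.

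First I would take the random $k$-uniform hypergraph $\mathcal{H}=\mathcal{H}_{n,p}$ on vertex set $\{1,\dots,n\}$, including each $k$-subset independently with probability $p := K\ln n / n^{k-1}$, where $K=K(k,c)$ is a large constant to be fixed. The two relevant quantities are controlled by first-moment estimates. For short cycles, a cycle of length $\ell$ is determined by choosing its $\ell$ linking vertices and the $k-2$ remaining vertices of each of its $\ell$ edges, giving at most $n^{\ell(k-1)}$ configurations, each present with probability $p^\ell$; hence the expected number of cycles of length $\ell$ is at most $(n^{k-1}p)^\ell = (K\ln n)^\ell$, and summing over $2\le \ell\le g$ the expected number $X$ of cycles of length at most $g$ is $\bigo{(\ln n)^g}=\littleo{n}$. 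By Markov's inequality $\Pee[X\ge n/2]\to 0$. For independence, the expected number of independent sets of size $t:=\lceil n/(2c)\rceil$ is at most $\binom{n}{t}(1-p)^{\binom{t}{k}}\le \exp\!\big(t(1+\ln(n/t)) - p\binom{t}{k}\big)$; with the above $p$ the subtracted term is of order $Kn\ln n/(2ck)^k$, which dominates the $\bigo{n}$ positive term once $K$ is large enough, so this expectation tends to $0$ and with high probability the independence number is below $n/(2c)$.

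Then I would apply the alteration: choose an outcome $\mathcal{H}$ satisfying both high-probability events, delete one vertex from each cycle of length at most $g$ (removing fewer than $n/2$ vertices), and let $\mathcal{H}'$ be the resulting hypergraph. By construction $\mathcal{H}'$ has girth larger than $g$. It has $m>n/2$ vertices, and since every independent set of $\mathcal{H}'$ is independent in $\mathcal{H}$ its independence number $\alpha$ is still below $n/(2c)$; therefore its weak chromatic number is at least $m/\alpha > (n/2)/(n/(2c)) = c$, i.e. larger than $c$, as required.

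The delicate point — the main obstacle — is the joint choice of $p$ (through the constant $K$). There is a genuine tension: increasing $p$ drives the independence number down but multiplies the number of short cycles, while decreasing it does the opposite. The reason the window $p=\Theta(\ln n/n^{k-1})$ works is a quantitative coincidence at this density: the factor $n^{k-1}p$ governing a single additional cycle edge is only $\Theta(\ln n)$, keeping $\E[X]$ polylogarithmic, whereas the same density forces every set of $\Theta(n)$ vertices to span an edge, pinning the independence number at $\bigo{n/K^{1/(k-1)}}$; one then selects $K$ large in terms of $k$ and $c$ so that this bound falls below $n/(2c)$ while $K$ and $g$ remain constants, so that $\E[X]$ stays $\littleo{n}$. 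Verifying that a single constant $K$ can meet both constraints for all large $n$ is the crux of the argument.
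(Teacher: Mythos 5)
The paper does not prove this statement at all: it is imported as a known result, cited to Erd\H{o}s--Lov\'asz \cite{EL75} and to Alon et al.\ \cite{alon2014coloring}, so there is no internal proof to compare against. Your argument is the standard first-moment plus deletion (alteration) proof, and it is correct. The reduction from weak chromatic number to independence number is right, the cycle count $\bigl(n^{k-1}p\bigr)^{\ell}=(K\ln n)^{\ell}$ is right (each of the $\ell$ edges is pinned down by its two linking vertices plus $k-2$ free vertices, and the $\ell$ edges are distinct so the events are independent), and the independent-set bound $\binom{n}{t}(1-p)^{\binom{t}{k}}\to 0$ for $t=\lceil n/(2c)\rceil$ holds. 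Two small remarks. First, when you delete ``one vertex from each short cycle'' you should delete a \emph{linking} vertex $v_i$ of the cycle, which kills the two hyperedges $e_i,e_{i+1}$ and hence the cycle; since the surviving hypergraph is an induced subhypergraph, every cycle of length at most $g$ in it would also be one in $\mathcal H$, so girth $>g$ follows, and independent sets of $\mathcal H'$ are independent in $\mathcal H$, so the $m/\alpha>c$ computation goes through. Second, your closing paragraph slightly overstates the delicacy: because you put a $\ln n$ factor into $p$, the term $p\binom{t}{k}=\Theta_{k,c}(Kn\ln n)$ beats the $\bigo{n}$ entropy term for \emph{any} fixed $K>0$ once $n$ is large, while $\E[X]=\bigo{(\ln n)^g}$ stays $\littleo{n}$ for any fixed $K$ and $g$; so no balancing of $K$ is actually needed, and the bound $\alpha=\bigo{n/K^{1/(k-1)}}$ you quote is the one relevant to the density $K/n^{k-1}$ without the logarithm. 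Neither point affects the validity of the proof. For the record, the cited sources take different routes (Erd\H{o}s--Lov\'asz via probabilistic/partial-design arguments, Alon et al.\ via an explicit iterated construction), but your probabilistic proof is a perfectly legitimate and self-contained justification of the theorem as used here (note it implicitly assumes $k\geq 2$, which is the only case the paper needs).
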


Our construction relies on the hypergraphs whose existence is established by Theorem~\ref{thm:alonisgreat}.

\begin{theorem}\label{thm:blocks} For any positive integers $b,c$, there
  exists an acyclic digraph $D$ with $\chi(D)\geq c$ in which all oriented cycles
  have more than $b$ blocks.
\end{theorem}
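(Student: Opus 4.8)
The plan is to build $D$ from the hypergraph $\mathcal H$ furnished by Theorem~\ref{thm:alonisgreat}, using the high weak chromatic number to force large $\chi(D)$ and the high girth to forbid short (few-block) cycles. Concretely, I would apply Theorem~\ref{thm:alonisgreat} with a $k$-uniform hypergraph $\mathcal H$ of girth larger than some $g=g(b)$ and weak chromatic number larger than $c$, where the uniformity $k$ and the girth threshold $g$ will be chosen as explicit functions of $b$ during the construction. The vertex set of $D$ will be the ground set $X$ of $\mathcal H$ (possibly together with some auxiliary subdivision vertices inside each hyperedge), and the arcs will be placed so that each hyperedge becomes a \emph{transitively oriented} gadget on its $k$ vertices.

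The key idea for acyclicity and for the colouring is to fix a global linear order on $X$ and orient every arc forward in this order, making $D$ acyclic automatically (so no directed cycle exists, and in particular the chromatic number statement only concerns the underlying graph). Within each hyperedge $e=\{x_1<\dots<x_k\}$ I would put the transitive tournament $x_i\to x_j$ for $i<j$, or more economically a dipath $x_1\to x_2\to\dots\to x_k$ together with a few chords; the precise gadget is chosen so that \emph{any} oriented cycle in $D$ that uses vertices from a single hyperedge can be ``shortcut'' and so that a genuine oriented cycle with few blocks must traverse several hyperedges in a way that yields a short cycle in $\mathcal H$. For the chromatic lower bound I would argue that a proper colouring of the underlying graph of $D$ with few colours would, by restricting to the ground set $X$, give a weak colouring of $\mathcal H$ (no hyperedge monochromatic, because inside a hyperedge's gadget two adjacent vertices get different colours), contradicting the weak chromatic number bound; this forces $\chi(D)\geq c$.

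The heart of the argument, and the step I expect to be the main obstacle, is the block-counting translation between oriented cycles in $D$ and cycles in $\mathcal H$. I would show that any oriented cycle $C$ in $D$ with at most $b$ blocks projects to a closed walk in $\mathcal H$ visiting at most $f(b)$ distinct hyperedges, where $f$ is some explicit function (each block of $C$, being a directed subpath within the forward orientation, can cross only a bounded number of hyperedge-gadgets before its monotone structure forces reuse of a hyperedge); then choosing the girth threshold $g>f(b)$ makes such a short cycle in $\mathcal H$ impossible. Making this projection precise is delicate: I must control how a single block can interleave several hyperedges and ensure that a cycle alternating direction at most $b$ times cannot ``hide'' a long cycle of $\mathcal H$ inside one block. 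The technical crux is therefore to pin down the right gadget inside each hyperedge so that backward arcs of $C$ correspond exactly to transitions between hyperedges, after which the girth hypothesis closes the argument.
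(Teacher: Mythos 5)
Your overall strategy (use Theorem~\ref{thm:alonisgreat}, turn high weak chromatic number into high $\chi(D)$ and high girth into the absence of cycles with few blocks) is the right starting point, and your colouring argument for $\chi(D)\geq c$ would indeed work for a one-layer construction in which every hyperedge spans at least one arc of $D$. But the step you yourself flag as the crux is a genuine gap, and I do not think it can be repaired within your framework. If you orient every arc forward along a global linear order of $X$, then the blocks of an oriented cycle $C$ correspond to the maximal monotone runs of $C$ in that order, and a \emph{single} block may pass monotonically through arbitrarily many hyperedge gadgets, entering and leaving each one at a shared ground vertex. Hence a cycle of $D$ with only two blocks can project onto a cycle of $\mathcal{H}$ of arbitrary length, and the girth hypothesis --- which only excludes \emph{short} cycles of $\mathcal{H}$ --- gives you nothing; note that $\mathcal{H}$ necessarily contains long cycles, since an acyclic hypergraph would have small weak chromatic number. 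Worse, if the gadget inside a hyperedge is a transitive tournament, or a dipath with even one chord, it already contains an oriented cycle with two blocks, killing the statement for every $b\geq 2$ before any interaction between hyperedges occurs; and if you instead subdivide the chords with auxiliary vertices, your colouring argument breaks, because a hyperedge whose ground vertices are pairwise non-adjacent in $D$ can be made monochromatic by a proper colouring of $D$.

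The paper circumvents exactly this obstruction with a recursive, layered construction. The digraph $D_{c+1}$ is built from disjoint copies $D_c^1,\dots,D_c^n$ of the digraph $D_c$ obtained at the previous step (one copy per ground element), and each hyperedge contributes new vertices $w_{S,i}$ that are \emph{sinks}, each with one in-neighbour in each of $c$ prescribed copies. Because every new vertex is a sink, any oriented cycle not contained in a single copy must change block at each such vertex it visits, contributing two blocks per visit; and consecutive visited sinks come from hyperedges sharing a ground element, so a cycle visiting $b'$ sinks yields a cycle of length at most $b'$ in $\mathcal{H}$, forcing $b'>b/2$ and hence more than $b$ blocks. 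The chromatic bound is then obtained not by your direct restriction argument but by the ``enumerate all colourings'' device: $\mathcal{H}$ is taken $(c\cdot p)$-uniform with weak chromatic number larger than $p$, where $p$ is the number of proper $c$-colourings of $D_c$, so that whichever $c$-colouring is used on each copy, some monochromatic hyperedge supplies a sink whose $c$ in-neighbours exhaust all $c$ colours. The lesson is that the block control comes from making the inter-copy vertices sinks, not from a global acyclic orientation of the ground set; without that idea your projection from cycles of $D$ to cycles of $\mathcal{H}$ cannot be made length-decreasing.
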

\begin{proof}
  We shall prove the result by induction on $c$, the result holding trivially
  for $c=2$ with $D$ the directed path on two vertices. We thus assume our
  claim to hold for a graph $D_c$ with $\chi(D_c)=c$, and show how extend it to
  $c+1$.

  Let $p$ be the number of proper $c$-colourings of $D_c$, and let those
  colourings be denoted by $col^1_c,...,col^{p}_c$. By
  Theorem~\ref{thm:alonisgreat} there exists a $c\times p$-uniform hypergraph
  $\mathcal H$ with weak chromatic number $>p$ and girth $>b/2$.  Let $X=\{x_1, \dots
  ,x_n\}$ be the ground set of ${\cal H}$.

  We construct $D_{c+1}$ from $n$ disjoint copies $D_c^1,...,D_c^n$ of $D_c$ as
  follows. For each hyperedge $S\in \mathcal H$, we do the following (see
  Figure~\ref{fig:hyperedge}) :

\begin{itemize}
\item We partition $S$ into $p$  sets $S_1,\dots,S_{p}$ of cardinality $c$.\\[-6mm]
\item For each set $S_i=\{x_{k_1},\dots, x_{k_c}\}$, we choose vertices
  $v_{k_1}\in D^{k_1}_c,\dots ,v_{k_c}\in D^{k_c}_c$ such that
  $col^{i}_c(v_{k_1})=1,\dots ,col^{i}_c(v_{k_c})=c$, and add a new vertex
  $w_{S,i}$ with $v_{k_1},\dots ,v_{k_c}$ as in-neighbours.
\end{itemize}

Let us denote by $W$ the set of vertices of $D_{c+1}$ that do not belong to any
of the copies of $D_c$ (i.e. the $w_{S,i}$). We now prove that the resulting
digraph $D_{c+1}$ is our desired digraph.

\begin{figure}[hbtp]
\begin{center}
  \begin{tikzpicture}[scale=1.4]
    \draw[rounded corners=.4cm] (-4,-.4) rectangle (4,.4);
    \foreach \i in {1,2,-1,-2} {
      \draw[thick,gray,dashed] (\i,.37) -- (\i,-.4);
    }
    \draw[gray] node at (-1.475,0) {\Large $\cdots$};
    \draw[gray] node at (+1.525,0) {\Large $\cdots$};

    \draw node at (-3,-.7) {\small $S_1$};
    \draw node at ( 0,-.7)   {\small $S_i$};
    \draw node at (+3,-.7) {\small $S_{p}$};

    \draw node at (1,.6) {\small $S\in \mathcal H$};

    \begin{scope}[xshift=-3cm]
      \foreach \j in {-1,...,1} {
        \begin{scope}[xshift=-.6*\j cm]
          \draw[red] node[circle,fill,scale=.3] (g\j0) at (0*360/5+90:.2cm) {};
          \draw[green] node[circle,fill,scale=.3] (g\j1) at (1*360/5+90:.2cm) {};
          \draw[blue] node[circle,fill,scale=.3] (g\j2) at (2*360/5+90:.2cm) {};
          \draw[red] node[circle,fill,scale=.3] (g\j3) at (3*360/5+90:.2cm) {};
          \draw[blue] node[circle,fill,scale=.3] (g\j4) at (4*360/5+90:.2cm) {};
          \draw[gray,->] (g\j1) -- (g\j0);
          \draw[gray,->] (g\j2) -- (g\j1);
          \draw[gray,->] (g\j2) -- (g\j3);
          \draw[gray,->] (g\j3) -- (g\j4);
          \draw[gray,->] (g\j4) -- (g\j0);
        \end{scope}
      }
      \draw node[circle,fill,scale=.3,label=0:$w_{S,1}$] (g0) at (0,1) {};
      \draw[->] (g-13) .. controls +(0,1) and +(0,-.5) .. (g0);
      \draw[->] (g04) .. controls +(0,.7) and +(0,-.5) .. (g0);
      \draw[->] (g11) .. controls +(0,.7) and +(0,-.5) .. (g0);

    \end{scope}

    \begin{scope}[xshift=+3cm]
      \foreach \j in {-1,...,1} {
        \begin{scope}[xshift=-.6*\j cm]
          \draw[green] node[circle,fill,scale=.3] (f\j0) at (0*360/5+90:.2cm) {};
          \draw[blue] node[circle,fill,scale=.3] (f\j1) at (1*360/5+90:.2cm) {};
          \draw[green] node[circle,fill,scale=.3] (f\j2) at (2*360/5+90:.2cm) {};
          \draw[blue] node[circle,fill,scale=.3] (f\j3) at (3*360/5+90:.2cm) {};
          \draw[red] node[circle,fill,scale=.3] (f\j4) at (4*360/5+90:.2cm) {};
          \draw[gray,->] (f\j1) -- (f\j0);
          \draw[gray,->] (f\j2) -- (f\j1);
          \draw[gray,->] (f\j2) -- (f\j3);
          \draw[gray,->] (f\j3) -- (f\j4);
          \draw[gray,->] (f\j4) -- (f\j0);
        \end{scope}
      }
      \draw node[circle,fill,scale=.3,label=0:$w_{S,p}$] (f0) at (0,1) {};
      \draw[->] (f01) .. controls +(0,.8) and +(0,-.5) .. (f0);
      \draw[->] (f10) .. controls +(0,.8) and +(0,-.5) .. (f0);
      \draw[->] (f-14) .. controls +(0,1) and +(0,-.5) .. (f0);
    \end{scope}

    \begin{scope}[xshift=+0cm]
      \foreach \j in {-1,...,1} {
        \begin{scope}[xshift=-.6*\j cm]
          \draw[blue] node[circle,fill,scale=.3] (p\j0) at (0*360/5+90:.2cm) {};
          \draw[red] node[circle,fill,scale=.3] (p\j1) at (1*360/5+90:.2cm) {};
          \draw[green] node[circle,fill,scale=.3] (p\j2) at (2*360/5+90:.2cm) {};
          \draw[blue] node[circle,fill,scale=.3] (p\j3) at (3*360/5+90:.2cm) {};
          \draw[red] node[circle,fill,scale=.3] (p\j4) at (4*360/5+90:.2cm) {};
          \draw[gray,->] (p\j1) -- (p\j0);
          \draw[gray,->] (p\j2) -- (p\j1);
          \draw[gray,->] (p\j2) -- (p\j3);
          \draw[gray,->] (p\j3) -- (p\j4);
          \draw[gray,->] (p\j4) -- (p\j0);
        \end{scope}
      }
      \draw node[circle,fill,scale=.3,label=0:$w_{S,i}$] (p0) at (0,1) {};
      \draw[->] (p-11) .. controls +(0,.7) and +(0,-.5) .. (p0);
      \draw[->] (p00) .. controls +(0,.7) and +(0,-.5) .. (p0);
      \draw[->] (p12) .. controls +(0,1) and +(0,-.5) .. (p0);
    \end{scope}
  \end{tikzpicture}
  \vspace{-.7cm}
\end{center}
\caption{Construction of $D_{c+1}$}\label{fig:hyperedge}
\end{figure}
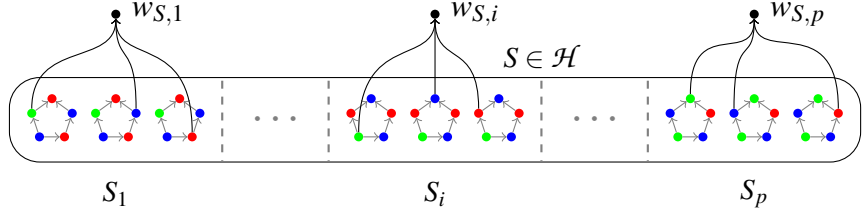

Firstly it is acyclic, as we only add sinks (the $w_{S,i}$) to disjoint copies
of $D_c$, which are acyclic by the induction hypothesis.

Secondly, every oriented cycle $C$ in $D_{c+1}$ has more than $b$ blocks. If $C$ is in a copy of $D_c$, then we have the result  by the induction hypothesis. Henceforth we may assume that $S$ contains some vertices in $W$, say
$w_1,...,w_{b'}$ in cyclic order around $C$. As the
vertices of $W$ are all sinks, the number of blocks of $C$ is at least $2b'$.
Let us denote by $S_{w_i}$ the hyperedge of $\mathcal H$ which triggered the creation of $w_i$. Then
two consecutive $S_{w_i},S_{w_{i+1}}$ (indices are modulo $b'$) have a
vertex $x_i$ of $X$ in common (indeed, the vertices between $w_i$ and $w_{i+1}$ in $C$ belong to some copy $D^i_c$). Therefore the sequence $x_{b'}, S_{w_1}, x_1, S_{w_2}, x_2, \dots , S_{w_{b'}}, x_{b'}$ contains a cycle  in ${\cal H}$. Hence by our choice of ${\cal H}$,  $b'>b/2$, so $C$ has more than $b$ blocks.


Finally, let us prove that $\chi(D_{c+1}) =c+1$.
We added a stable set to the disjoint union of copies of $D_c$, so $\chi(D_{c+1})\leq \chi(D_c) +1 =c+1$.\\
Now suppose for a contradiction that $D_{c+1}$ admits a proper $c$-colouring
$\phi$. It induces on ${\cal H}$ the $p$-colouring $\psi$ where $\psi(x_k)$ is
the index of the colouring of $D_c$ on $D_c^k$, i.e. the restriction of $\phi$
on $D_c^k$ is the colouring $col^{\psi(x_k)}_c$.  Now since ${\cal H}$ is
$(p+1)$-chromatic, there exists an hyperedge $S$ of ${\cal H}$ which is
monochromatic.  Let $i$ be the integer such that $\psi(x)=i$ for all $x\in S$.
Consider $S_i=\{x_{k_1},\dots, x_{k_c}\}$ and let $v_{k_1}\in D^{k_1}_c,\dots
,v_{k_c}\in D^{k_c}_c$ be the in-neighbours of $w_{S,i}$.  By construction,
$col^{i}_c(v_{k_1})=1,\dots ,col^{i}_c(v_{k_c})=c$, so $\phi(v_{k_1})=1 , \dots
, \phi(v_{k_c})=c$.  Consequently $w_{S,i}$ has the same colour (by $\phi$) as
one of its in-neighbours. This contradicts the fact that $\phi$ is proper.
Hence $\chi(D_{c+1}) \geq c+1$.
\end{proof}

Theorems~\ref{thm:infty1} and~\ref{thm:infty2} directly follow from Theorem~\ref{thm:blocks}, since a cycle and its subdivision have the same number of blocks.

\section{Cycles with two blocks in strong digraphs}\label{sec:2blocks}

In this section we first prove that $\SForb(C(k,\ell)) \cap {\cal S}$ has bounded chromatic number for every $k,\ell$. 
We need some preliminaries.

\subsection{Definitions and tools}

\subsubsection{Levelling}\label{subsubsec:level}

In a digraph $D$, the {\it distance} from a vertex $x$ to another $y$, denoted by $\dist_D(x,y)$ or simply  $\dist(x,y)$ when $D$ is clear from the context, is the minimum length of an $(x,y)$-dipath or $+\infty$ if no such dipath exists.
For a set $X\subseteq V(D)$ and vertex $y \in V(D)$, we define $\dist (X, y) =\min \{\dist (x,y) \mid x\in X\}$ and $\dist (y, X) =\min \{\dist (y,x) \mid x\in X\}$, and
for two sets $X,Y \subseteq V(D)$,  $\dist (X, Y) =\min \{\dist (x,y) \mid x\in X, y\in Y\}$.

An {\it out-generator} in a digraph $D$ is a vertex $u$ such that for any $x\in V(D)$, there is an $(u,x)$-dipath.
Observe that in a strong digraph every vertex is an out-generator.

Let $u$ be an out-generator of $D$.
For every nonnegative integer $i$,  the {\it $i$th level  from $u$} in $D$ is $L^u_i=\{v \mid \dist_D(u,v)=i\}$.
Because $u$ is an out-generator, $\bigcup_{i} L^u_i =V(D)$.
Let $v$ be a vertex of $D$, we set $\lvl^u(v)= \dist_D(u,v)$, hence $v \in L^u_{\lvl(v)}$.
In the following, the vertex $u$ is always clear from the context. Therefore, for sake of clarity, we drop the superscript $u$.

The definition immediately implies the following.
\begin{proposition}\label{prop:longueur}
Let $D$ be a digraph having an out-generator $u$.
If $x$ and $y$ are two vertices of $D$ with  $\lvl(y) > \lvl(x)$, then every $(x,y)$-dipath has length at least $\lvl(y) - \lvl(x)$.
\end{proposition}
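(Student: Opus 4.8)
The plan is to reduce the statement to the triangle inequality for the distance function $\dist_D(u,\cdot)$. Recall that by definition $\lvl(v)=\dist_D(u,v)$ for every vertex $v$, so the claim is equivalent to showing that for any $(x,y)$-dipath $P$ one has $\dist_D(u,y)\le \dist_D(u,x)+\mathrm{length}(P)$. In other words, the whole content is the subadditivity of shortest-path distance through the intermediate vertex $x$.

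First I would fix an arbitrary $(x,y)$-dipath $P$, say of length $\ell$, and choose a shortest $(u,x)$-dipath $Q$, which has length exactly $\lvl(x)$; such a dipath exists precisely because $u$ is an out-generator. Concatenating $Q$ followed by $P$ produces a $(u,y)$-walk $W$ of length $\lvl(x)+\ell$. The one point deserving a word of care is that $W$ is a priori only a walk and not a dipath, since $Q$ and $P$ may share vertices other than $x$. This is harmless: any $(u,y)$-walk contains a $(u,y)$-dipath of no greater length, obtained by repeatedly deleting closed subwalks. Hence there is a $(u,y)$-dipath of length at most $\lvl(x)+\ell$, and therefore $\dist_D(u,y)\le \lvl(x)+\ell$, that is, $\lvl(y)\le \lvl(x)+\ell$.

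Rearranging yields $\ell\ge \lvl(y)-\lvl(x)$, which is exactly the asserted bound; since $P$ was an arbitrary $(x,y)$-dipath, the conclusion holds for all of them. The hypothesis $\lvl(y)>\lvl(x)$ plays no role in the argument beyond guaranteeing that the right-hand side $\lvl(y)-\lvl(x)$ is positive, so that the bound is non-vacuous. There is no genuine obstacle in this proof — it is an immediate consequence of the definition of levels as distances from $u$ — and the only step requiring justification is the passage from the concatenated walk $W$ to a $(u,y)$-dipath of at most the same length.
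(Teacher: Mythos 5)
Your proof is correct and is exactly the triangle-inequality argument the paper has in mind: the authors state the proposition with no written proof, remarking only that it follows immediately from the definition of the levels as distances from $u$, and your concatenation of a shortest $(u,x)$-dipath with $P$ (together with the extraction of a dipath from the resulting walk) is the standard way to make that immediacy explicit.
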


Let $D$ be a digraph and $u$ be an out-generator of $D$. A {\it Breadth-First-Search Tree}  or {\it BFS-tree} $T$ with {\it root $u$}, is a sub-digraph of $D$ spanning $V(D)$ such that $T$ is an oriented tree and, for any $v \in V(D)$, $dist_T(u,v) =  \dist_D(u,v)$.
It is well-known that if $u$ is an out-generator of $D$, then there exist BFS-trees with root $u$.

Let $T$ be a BFS-tree with root $u$.
For any vertex $x$ of $D$, there is an unique $(u,x)$-dipath in $T$. The \textit{ancestors}
of $x$ are the vertices on this dipath. For an ancestor $y$ of $x$, we note $y \geq_T x$.
If $y$ is an ancestor of $x$, we denote by $T[y,x]$ the unique $(y,x)$-dipath in $T$.
For any two vertices $v_1$ and $v_2$, the \textit{least common ancestor} of $v_1$ and $v_2$
is the common ancestor $x$ of $v_1$ and $v_2$ for which $\lvl(x)$ is maximal. (This is well-defined since $u$ is an ancestor of all vertices.)

\subsubsection{Decomposing a digraph}

The {\it union} of two digraphs $D_1$ and $D_2$ is the digraph $D_1\cup
D_2$ with vertex set $V(D_1)\cup V(D_2)$ and arc set $A(D_1)\cup
A(D_2)$. Note that $V(D_1)$ and $V(D_2)$ are not necessarily disjoint.

The following lemma is well-known.
\begin{lemma}\label{lem:decomp}
Let $D_1$ and $D_2$ be two digraphs.
$\chi(D_1\cup D_2) \leq \chi(D_1)\times \chi(D_2)$.
\end{lemma}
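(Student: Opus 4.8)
The plan is to build a proper colouring of the underlying graph of $D_1 \cup D_2$ directly from optimal colourings of $D_1$ and $D_2$, using the standard product-colouring idea. Let $\phi_1$ be a proper colouring of the underlying graph of $D_1$ using colours in $\{1, \dots, \chi(D_1)\}$, and let $\phi_2$ be a proper colouring of the underlying graph of $D_2$ using colours in $\{1, \dots, \chi(D_2)\}$.

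First I would deal with the fact that $V(D_1)$ and $V(D_2)$ need not coincide. A vertex of $D_1 \cup D_2$ may lie in only one of the two digraphs, so $\phi_1$ and $\phi_2$ are a priori not defined on all of $V(D_1) \cup V(D_2)$. I extend each colouring to the whole vertex set by assigning colour $1$, say, to every vertex outside its domain. Since a vertex of $V(D_2) \setminus V(D_1)$ is incident to no arc of $D_1$, it is isolated in the underlying graph of $D_1$, so extending $\phi_1$ in this way keeps it a proper colouring of $D_1$; symmetrically for $\phi_2$.

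Then I would define the product colouring $c$ on $V(D_1 \cup D_2)$ by $c(v) = (\phi_1(v), \phi_2(v))$, which takes values in a set of size $\chi(D_1) \times \chi(D_2)$. To check that $c$ is proper, consider any edge $uv$ of the underlying graph of $D_1 \cup D_2$. It arises from an arc of $A(D_1)$ or of $A(D_2)$. In the first case $u$ and $v$ are adjacent in the underlying graph of $D_1$, so $\phi_1(u) \neq \phi_1(v)$ and hence $c(u) \neq c(v)$; in the second case $\phi_2(u) \neq \phi_2(v)$ gives $c(u) \neq c(v)$ likewise. Thus $c$ is a proper colouring of $D_1 \cup D_2$ using at most $\chi(D_1) \times \chi(D_2)$ colours, which yields the claimed bound.

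There is essentially no hard step here: the only point requiring a little care is the extension of the two colourings to vertices lying in a single $D_i$, which is harmless precisely because such vertices are isolated in the other digraph. Everything else is the routine verification that a Cartesian product of proper colourings is proper.
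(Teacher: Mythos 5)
Your proof is correct and follows essentially the same route as the paper: extend each optimal colouring by a fixed colour to the vertices it does not cover (harmless since those vertices are isolated in that digraph) and then take the product colouring. No issues.
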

\begin{proof}
Let $D=D_1 \cup D_2$.  For $i\in \{1,2\}$, let $c_i$ be a proper colouring of $D_i$ with $\{1, \dots , \chi(D_i)\}$. Extend $c_i$ to $(V(D), A(D_i))$  by assigning the colour $1$ to all vertices in $V_{3-i}$.
Now the function $c$ defined by $c(v)=(c_1(v), c_2(v))$ for all $v\in V(D)$ is a proper colouring of $D$ with colour set $\{1, \dots , \chi(D_1)\} \times \{1, \dots , \chi(D_2)\}$.
\end{proof}

\subsection{General upper bound}\label{subsec:2blocks-gen}

\begin{theorem}\label{thm:Ckk}
Let $k$ and $\ell$ be two positive integers such that $k\geq \max\{\ell,3\}$, and let $D$ be a digraph in $\SForb(C(k,\ell)) \cap {\cal S}$.  Then, $\chi(D) \leq \borne .$
\end{theorem}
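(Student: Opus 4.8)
The plan is to exploit strong connectivity through the levelling machinery of Subsection~\ref{subsubsec:level}. Since $D$ is strong, every vertex is an out-generator; I fix one vertex $u$, a BFS-tree $T$ rooted at $u$, and the level decomposition $V(D)=\bigcup_i L_i$ with $\lvl(v)=\dist_D(u,v)$. The observation I would use repeatedly is that the tree turns a single backward arc into a cycle with two blocks: for an arc $vw$ with least common ancestor $a=\mathrm{lca}_T(v,w)$, the two internally disjoint $T$-dipaths $T[a,v]$ and $T[a,w]$ together with $vw$ form, whenever $w$ is not an ancestor of $v$, a cycle with two blocks of lengths $\lvl(v)-\lvl(a)+1$ and $\lvl(w)-\lvl(a)$. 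Since $D\in\SForb(C(k,\ell))$, such a cycle is never a subdivision of $C(k,\ell)$, so these two lengths can never simultaneously reach $k$ and $\ell$; and Proposition~\ref{prop:longueur}, which certifies that any dipath between two vertices is at least as long as their level difference, lets me guarantee that the blocks I build are genuinely long. Consequently every \emph{long} backward arc is forced to have its least common ancestor very close (within $\ell-1$ levels) to its head. This local-LCA phenomenon is the structural engine of the whole argument.

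With this in hand I would bound $\chi(D)$ by decomposing $A(D)$ and invoking Lemma~\ref{lem:decomp}, which multiplies the chromatic numbers of the pieces. The first piece is the spanning subdigraph $D^{\uparrow}$ of arcs $vw$ with $\lvl(w)=\lvl(v)+1$ (in a BFS-tree no arc can climb more than one level): every such arc joins levels of opposite parity, so colouring $v$ by $\lvl(v)\bmod 2$ is proper and $\chi(D^{\uparrow})\le 2$. It then remains to bound the chromatic number of the spanning subdigraph $D^{\downarrow}$ formed by the arcs that do not climb, i.e.\ the same-level and backward arcs; the target is $\chi(D)\le 2\,\chi(D^{\downarrow})$ with $\chi(D^{\downarrow})$ bounded by the product of the remaining three factors of $\borne$.

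To control $D^{\downarrow}$ I would group the levels into consecutive bands of width roughly $k+\ell$ and colour the bands cyclically modulo a small constant, so that the only arcs surviving inside each colour class are either intra-band arcs or backward arcs skipping at least one whole band. For the intra-band arcs, the LCA observation confines every two-block cycle they create to a window of $O(k+\ell)$ levels; I would then argue that a band cannot contain an arbitrarily long dipath without producing, through the tree paths to a least common ancestor, two internally disjoint dipaths between a common source and sink of lengths $\ge k$ and $\ge\ell$, so the longest dipath inside a band has bounded length and Gallai--Roy (Theorem~\ref{thm:gallairoy}) bounds its chromatic number; a critical-subgraph and minimum-degree count (Proposition~\ref{prop:dkcrit}) would convert the same constraint into the explicit factors $(k+\ell-2)(k+\ell-3)$ and $(\ell+1)$. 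For the long backward arcs that cross band boundaries, the local-LCA property pins their least common ancestors just above their heads, and a chain of $k+\ell$ nested such arcs again builds a transitive-tournament-like structure containing $C(k,\ell)$; bounding the length of such chains bounds the longest dipath of this piece and hence, once more by Gallai--Roy, its chromatic number. Multiplying the forward factor $2$, the per-band factors, and the band-count factor $(k+\ell+1)$ via Lemma~\ref{lem:decomp} then yields $\chi(D)\le\borne$.

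The step I expect to be the genuine obstacle is the treatment of backward arcs whose head is an ancestor of their tail: individually these create only a directed cycle (a single block) and are therefore invisible to the two-block forbidden pattern, yet collectively a nested family of them manufactures cycles with two blocks of prescribed lengths. Turning this ``no long transitive chain'' phenomenon into a genuine chromatic (or degeneracy) bound, rather than a mere bound on clique-like substructures, and doing so with constants sharp enough to land exactly on $\borne$, is where the argument needs the most care; the band width, the modulus of the cyclic band-colouring, and the precise way Gallai--Roy and the minimum-degree bound are combined all have to be tuned simultaneously so as not to lose an extra multiplicative factor.
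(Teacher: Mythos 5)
Your skeleton --- out-generator, BFS-tree, least common ancestors, a partition of $A(D)$ by level difference, and Lemma~\ref{lem:decomp} to multiply the pieces --- is exactly the paper's skeleton, and your observation that an arc plus the two tree-paths from the LCA yields a two-block cycle is the right engine. But the proposal has a genuine gap at the point where each piece's chromatic number must actually be bounded. You propose to bound the longest dipath in each piece and invoke Gallai--Roy. That fails already for the same-level arcs: $D[L_i]$ need not be acyclic, and even for a long dipath $Q$ inside a level the cycle formed with the tree-paths from the LCA of its endpoints has one block absorbing all of $Q$ and the other block possibly of length $1$, so no $C(k,\ell)$-subdivision is forced and no bound on dipath length follows. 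The tool the paper uses instead, and which your argument is missing, is Theorem~\ref{thm:2blocks}: a digraph of chromatic number at least $n$ contains \emph{every} two-block path on $n$ vertices. Extracting a $P^{+}(k-1,\ell-1)$ (resp.\ $P^{-}(\ell+1,\ell+1)$, $P^{-}(k,\ell)$) from a high-chromatic piece gives two already-long blocks meeting at a vertex, and only then do the tree-paths to the LCA close them into a forbidden subdivision. Without this, none of your per-piece bounds gets off the ground.

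The second gap is the one you yourself flag: the long backward arcs. For those whose head is an ancestor of the tail (the paper's $A_2$), your ``nested chain builds a transitive-tournament-like structure containing $C(k,\ell)$'' does not work --- such a chain lies entirely on one root-to-leaf tree path and its arcs only create directed cycles sharing vertices, never two internally disjoint blocks; the paper's Claim~\ref{claim:D2} instead pulls a $P^{-}(\ell+1,\ell+1)$ out of this piece and runs a pigeonhole argument on how the two dipaths interleave along $T[u,y]$. For the long backward arcs whose head is \emph{not} an ancestor (the paper's $A_3$, bounded by $k+\ell+1$), the argument is the hardest of all and uses strong connectivity in an essential way --- shortest return dipaths from the endpoints of a $P^{-}(k,\ell)$ back to the tree --- whereas your plan never uses strong connectivity beyond the existence of an out-generator. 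Since these two cases are precisely where the theorem's difficulty lives, the proposal as written does not constitute a proof.
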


\begin{proof}

  Since $D$ is strongly connected, it has an out-generator $u$. Let $T$ be a
  BFS-tree with root $u$. We define the following sets of arcs.

\begin{eqnarray*}
A_0 & = & \{xy \in A(D) \mid \lvl(x)=\lvl(y)\} ;\\
A_1 & =  & \{xy\in A(D) \mid 0< | \lvl(x)-\lvl(y) |< k+\ell-3;\\
A' & = & \{xy\in A(D) \mid  \lvl(x)-\lvl(y)\geq k+\ell-3\}.
\end{eqnarray*}

Since $k+\ell-3>0$ and there is no arc $xy$ with $\lvl(y)>\lvl(x)+1$, $(A_0, A_1, A')$ is a partition of $A(D)$.
Observe moreover that $A(T)\subseteq A_1$.
We further partition $A'$ into two sets $A_2$ and $A_3$,
where $A_2 =\{ xy \in A' \mid y \mbox{ is an ancestor of } x \mbox{ in } T\}$ and $A_3=A'\setminus A_2$.
Then $(A_0, A_1, A_2, A_3)$ is a partition of $A(D)$. Let $D_j=(V(D), A_j)$ for all $j\in \{0,1,2,3\}$.


\begin{claim}\label{claim:D0}
$\chi(D_0) \leq k+\ell-2$.
\end{claim}
\begin{subproof}
Observe that $D_0$ is the disjoint union of the $D[L_i]$ where $L_i=\{v \mid \dist_D(u,v)=i\}$.
Therefore it suffices to prove that $\chi(D[L_i]) \leq k+\ell-2$ for all non-negative integer $i$.

$L_0=\{u\}$ so the result holds trivially for $i=0$.

Assume now $i\geq 1$.
Suppose for a contradiction $\chi(D[L_i]) \geq k+\ell -1$.
Since $k\geq 3$, by Theorem~\ref{thm:2blocks}, $D[L_i]$ contains a copy $Q$ of $P^+(k-1,\ell-1)$.
Let $v_1$ and $v_2$ be the initial and terminal vertices of $Q$, and let $x$ be the least common ancestor of $v_1$ and $v_2$.
By definition, for $j\in \{1,2\}$, there exists a $(x,v_j)$-dipath $P_j$ in $T$. By definition of least common ancestor, $V(P_1)\cap V(P_2)=\{x\}$, $V(P_j)\cap L_i =\{v_j\}$, $j=1,2$, and both $P_1$ and $P_2$ have length at least $1$.
Consequently, $P_1\cup P_2\cup Q$ is a subdivision of $C(k,\ell)$, a contradiction.
\end{subproof}

\begin{claim}\label{claim:D1}
$\chi(D_1) \leq k+\ell-3$.
\end{claim}
\begin{subproof}
Let $\phi_1$ be the colouring of $D_1$ defined by $\phi_1(x)= \lvl(x)\pmod{k+\ell-3}$.
By definition of $D_1$, this is clearly a proper colouring of $D_1$.
\end{subproof}

\begin{claim}\label{claim:D2}
$\chi(D_2) \leq 2\ell+2$.
\end{claim}
\begin{subproof}
Suppose for a contradiction that $\chi(D_2)\geq 2\ell+3$. By Theorem~\ref{thm:2blocks},  $D_2$ contains a copy $Q$ of $P^-(\ell+1,\ell+1)$, which is the union of two disjoint dipaths which are disjoint except in there initial vertex $y$, say $Q_1 =(y_0, y_1,y_2, \dots ,y_{\ell+1})$ and $Q_2=(z_0, z_1, z_2, \dots ,z_{\ell+1})$ with $y_0=z_0=y$. Since $Q$ is in $D_2$,
all vertices of $Q$ belong to $T[u,y]$.  Without loss of generality, we can assume $z_{1} \geq_T y_{1}$.

If $z_{\ell+1} \geq_T y_{\ell+1}$, then let $j$ be the smallest integer such that $z_j\geq_T y_{\ell+1}$. Then the union of
$T[y_1,y] \odot Q_2[y,z_j]\odot T[z_j, y_{\ell+1}]$ and $Q_1[y_1,y_{\ell+1}]$ is a subdivision of $C(k,\ell)$, because $T[y_1,y]$ has length at least $k-2$ as $\lvl(y) \geq \lvl(y_1) +k +\ell -3$. This is a contradiction.

Henceforth $y_{\ell+1} \geq_T z_{\ell+1}$.
Observe that all the $z_j$, $1\leq j\leq \ell+1$ are in $T[y_{\ell+1} , y_1]$.
This, by the Pigeonhole principle, there exists $i,j\geq 1$ such that
$y_{i+1}\geq_T z_{j+1} \geq_T z_j\geq_Ty_i\geq_T z_{j-1}$.

If $\lvl(z_{j-1}) \geq \lvl(y_i)+\ell -1$, then $T[y_i, z_{j-1}]\odot (z_{j-1},z_j)$ has length at least $\ell$.
Hence its union with $(y_i, y_{i+1})\odot T[y_{i+1}, z_j]$, which has length greater than $k$, is a subdivision of $C(k,\ell)$, a contradiction.

Thus $\lvl(z_{j-1}) < \lvl(y_i)+\ell -1$ (in particular, in this case, $j>1$ and $i>2$). Therefore, by definition of $A'$, $\lvl(y_i) \geq \lvl (z_j) +k-1$ and $\lvl(y_{i-1}) \geq \lvl (z_{j-1}) +k-1$.
Hence both $T[z_{j-1}, y_{i-1}]$ and $T[z_{j}, y_{i}]$ have length at least $k-1$. So
 the union of $T[z_{j-1}, y_{i-1}]\odot (y_{i-1},y_i)$ and $(z_{j-1},z_j) \odot T[z_{j}, y_{i}]$ is a subdivision of $C(k,k)$ (and thus of $C(k,\ell)$), a contradiction.
\end{subproof}

\begin{claim}\label{claim:D3}
$\chi(D_3) \leq k+\ell +1$.
\end{claim}
\begin{subproof}
In this claim, it is important to note that $k+\ell-3\geq k-1$ because $\ell\geq 2$.
We use the fact that $\lvl(x) -\lvl(y) \geq k-1$ if $xy$ is an edge in $A_3$.

Suppose for a contradiction that $\chi(D_3) \geq k+\ell +1$.
By Theorem \ref{thm:2blocks}, $D_3$ contains a copy $Q$ of $P^-(k,\ell)$ which is the union of two disjoint dipaths which are disjoint except in there initial vertex $y$, say $Q_1 =(y_0, y_1,y_2, \dots ,y_{k})$ and $Q_2=(z_0, z_1, z_2, \dots ,z_{\ell})$ with $y_0=z_0=y$. 

Assume that a vertex of $Q_1-y$ is an ancestor of $y$.
Let $i$ be the smallest index such that $y_i$ is an ancestor of $y$.
If it exists, by definition of $A_3$, $i \geq 2$.
Let $x$ be the common ancestor of $y_i$ and $y_{i-1}$ in $T$.
By definition of $A_3$, $y_i$ is not an ancestor of $y_{i-1}$, so $x$ is different from $y_i$ and $y_{i-1}$.
Moreover  by definition of $A'$, $\lvl(y)-k\geq\lvl(y_{i-1}) -k \geq \lvl(y_i) -1 \geq \lvl(x)$.
Hence $T[x,y_{i-1}]$ and $T[x, y]$ have length at least $k$. Moreover these two dipaths are disjoint except in $x$.
Therefore, the union of $T[x,y_{i-1}]$ and $T[x,y]  \odot Q_1[y, y_{i-1}]$ is a subdivision of $C(k,k)$ (and thus of $C(k,\ell)$), a contradiction.

 Similarly, we get a contradiction if a vertex of $Q_2-y$ is an ancestor of $y$.
Henceforth, no vertex of $V(Q_1)\cup V(Q_2)\setminus \{y\}$ is an ancestor of $y$.

Let $x_1$ be the least common ancestor of $y$ and $y_1$. Note that $|T[x_1,y]| \geq k$ so $|T[x_1,y_1]| < k$, for otherwise
$G$ would contain a  subdivision of $C(k,k)$. Therefore $\lvl(y_1) - \lvl(x_1) < k$.
We define inductively $x_2, \dots, x_{k}$ as follows: $x_{i+1}$ is the least common ancestor of $x_i$ and $y_i$.
As above $|T[x_i,y_{i-1}]| \geq k$ so $\lvl(y_i) - \lvl(x_i)< k$.
Symmetrically, let $t_1$ be the least common ancestor of $y$ and $z_1$ and for $1\leq i\leq \ell-1$, let $t_{i+1}$ be the least common ancestor of $t_i$ and $z_i$. For $1\leq i\leq \ell$, we have $\lvl(z_i) - \lvl(t_i) < k$.
Moreover, by definition all $x_i$ and $t_j$ are ancestors of $y$, so they all are on $T[u,y]$.

Let $P_{y}$ (resp. $P_{z}$) be a shortest dipath in $D$ from $y_{k}$
(resp. $z_{\ell}$) to $T[u,y]  \cup Q_1[y_1, y_{k-1}] \cup  Q_2[z_1, z_{\ell -1}]$. Note that $P_y$ and $P_z$ exist since $D$ is strongly connected.
Let $y'$ (resp. $z'$) be the terminal vertex of $P_y$ (resp. $P_z$).
Let $w_y$ be the last vertex of $T[x_{k}, y_{k}]$  in $P_y$ (possibly, 
$w_y= y_{k}$.)  Similarly,  let $w_z$ be the last vertex of $T[t_{\ell}, z_{\ell}]$  in $P_z$ (possibly, 
$w_z= z_{\ell}$.)
Note that $P_y[w_y,y']$ is a shortest dipath from $w_y$ to $y'$ and  $P_z[w_z,z']$ is a shortest dipath from $w_z$ to $z'$.

If $y'=y_j$ for $0\leq j \leq k-1$, consider $R=T[x_{k}, w_y]\odot P_y[w_y,y_j]$ is an $(x_{k}, y_j)$-dipath. By Proposition~\ref{prop:longueur}, $R$ has length at least $k$ because $\lvl(y_j) -\lvl(x_{k}) \geq \lvl(y_j) -\lvl(y_{k})+1\geq k$.
Therefore the union of $R$ and $T[x_{k},y]  \cup Q_1[y, y_j]$ is a subdivision of $C(k,k)$, a contradiction.

Similarly, we get a contradiction if $z'$ is in $\{z_1, \dots ,z_{\ell-1}\}$.
Consequently, $P_y$ is disjoint from $Q_1[y, y_{k-1}]$ and $P_z$ is disjoint from $Q_2[y, z_{\ell-1}]$.

If $P_y$ and $P_z$ intersect in a vertex $s$. By the above statement,  $s\notin V(Q)\setminus \{y_{k}, z_{\ell}\}$.
Therefore the union of $Q_1 \odot P_y[y_{k},s]$ and $Q_2 \odot P_z[z_{\ell},s]$ is a subdivision of $C(k,\ell)$, a contradiction.
Henceforth $P_y$ and $P_z$ are disjoint.

Assume both $y'$ and $z'$ are in $T[u,y]$. If  $y' \geq_Tz'$, then the union of $Q_1 \odot P_y \odot
T[y',z']$ and $Q_2 \odot P_z$ form a subdivision of $C(k,\ell)$; and  if $z' \geq_Ty'$, then the union of $Q_1 \odot P_y$ and $Q_2 \odot P_z\odot T[z',y']$ form a subdivision of $C(k,\ell)$. This is a contradiction.

Henceforth a vertex among $y'$ and $z'$ is not in $T[u,y]$.
Let us assume that $y'$ is not in $T[u,y]$ (the case $z'\not\in T[u,y]$ is similar), and so
$y'=z_i$ for some $1\leq i\leq \ell-1$.
If $\lvl(y') \geq \lvl(x_{k}) +k$, then both $T[x_{k}, w_y]\odot P_y[w_y,y']$ and $T[x_{k}, y] \odot Q_2[y,z_i]$ have length at least $k$ by Proposition~\ref{prop:longueur}, so their union is a subdivision of $C(k,k)$, a contradiction.
Hence $\lvl(x_{k}) \geq \lvl(z_i)-k+1\geq \lvl(z_{\ell})\geq \lvl(t_{\ell}).$

If $z'=y_j$ for some $j$, then necessarily $\lvl(z') \geq \lvl(x_{k}) +k\geq \lvl(t_{\ell})+k$ and both $T[t_{\ell}, w_z]\odot P_z[w_z,z']$ and $T[t_{\ell}, y] \odot Q_1[y,y_j]$ have length at least $k$, so their union is a subdivision of $C(k,k)$, a contradiction.

Therefore $z'\in T[u,y]$.
The union of $T[t_{\ell}, z']$ and $T[t_{\ell}, w_z]\odot P_z[w_z,z']$ is not a subdivision of $C(k,k)$ so by Proposition~\ref{prop:longueur},
$\lvl(z')\leq \lvl(t_{\ell} )+k-1 \leq \lvl(z_{\ell}) +k-1 \leq \lvl(z_{\ell-1}).$

If $\lvl(z')\leq \lvl(x_{k})$, then the union of $Q_1$ and $Q_2\odot P_{z} \odot T[z', y_{k}]$ is a subdivision of $C(k,\ell)$, a contradiction.
Hence $\lvl(z') > \lvl(x_{k})$.
Therefore $\lvl(y')=\lvl(z_i) \leq \lvl(x_{k}) +k -1 \leq \lvl(z') +k-2 \leq \lvl(z_{\ell}) + 2k-3$, which implies that $i=\ell-1$ that is $y'=z_i=z_{\ell-1}$.
Now the union of  $[T[x_1,y_1]]\odot  Q_1[y_1,y_{k}] \odot P_y$  and $T[x_1,y]\odot Q_2[y, z_{\ell-1}]$ is a subdivision of $C(k,\ell)$, a contradiction. 
\end{subproof}

Claims~\ref{claim:D0},~\ref{claim:D1},~\ref{claim:D2}, and~\ref{claim:D3}, together with Lemma \ref{lem:decomp} yield the result.
\end{proof}

\subsection{Better bound for Hamiltonian digraphs}

We now improve on the bound of Theorem~\ref{thm:Ckk} in case of digraphs having a Hamiltonian directed cycle. Therefore we define
$$\phi(k,\ell) = \max\{\chi(D) \mid D\in \SForb(C(k,\ell)) ~\mbox{and $D$ has a Hamiltonian directed cycle}\}.$$

This section aims at proving that $\phi(k,k)\leq 6k-6$.

Let $D$ be a digraph and let $C=(v_1,\ldots,v_n,v_1)$  be a  Hamiltonian cycle in $D$ ($C$ may be directed or not).

For any $i,j \leq n$, let $d_C(v_i,v_j)$ be the distance between $v_i$ and $v_j$ in the undirected cycle $C$. That is, $d_C(v_i,v_j)=\min\{j-i,n-j+i\}$ if $j>i$ and $d_C(v_i,v_j)=\min\{i-j,n-i+j\}$ otherwise.

A  {\it chord} is an arc of $A(D)\setminus A(C)$. The {\it span} $\spa_C(a)$ of a chord $a=v_iv_j \in F$ is $d_C(i,j)$. We denote by $\spa_C(D)$ be the maximum span of a chord in $D$.


\begin{lemma}\label{lem:longChord}
If $D$ is a digraph with a Hamiltonian cycle $C$ and at least one chord, then $\chi(D) < 2\cdot  \spa_C(D)$.
\end{lemma}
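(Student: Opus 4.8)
The plan is to discard the orientation and bound the chromatic number of the underlying graph $G$ of $D$. Writing $s=\spa_C(D)$, the whole point of the span is that \emph{every} edge of $G$ --- be it an edge of the Hamiltonian cycle $C=(v_1,\dots,v_n,v_1)$ (span $1$) or a chord (span at most $s$) --- joins two vertices that are at cyclic distance at most $s$ along $C$. I would exploit this locality by cutting $C$ into consecutive arcs (blocks) of length $s$, say $B_t=\{v_{ts+1},\dots,v_{ts+s}\}$, so that each edge of $G$ has both endpoints either inside one block or inside two cyclically consecutive blocks. This sets up a decomposition to which Lemma~\ref{lem:decomp} applies.

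Concretely, I would split the arc set as $A(G)=A_1\cup A_2$, where $A_1$ collects the edges lying inside a single block and $A_2$ the edges running between two consecutive blocks, and set $D_1=(V,A_1)$, $D_2=(V,A_2)$. Then $D_1$ is a disjoint union of graphs, each on at most $s$ vertices, so $\chi(D_1)\le s$. For $D_2$, contracting each block to a single vertex yields a graph carried by the cyclic sequence of blocks; if the number of blocks is even, two-colouring the blocks alternately is proper for every between-block edge, giving $\chi(D_2)\le 2$. Lemma~\ref{lem:decomp} would then give $\chi(G)\le\chi(D_1)\cdot\chi(D_2)\le 2s$, and the colouring is inherited by $D$.

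The main obstacle is exactly the wrap-around at the point where $C$ was cut: when the number of blocks is odd, the block cycle is an odd cycle and the two-colouring of $D_2$ fails at the seam, and even when it is even the crude product bound only reaches $2s$ rather than the strict $\chi(D)<2s$. This is where the hypothesis ``at least one chord'' should earn its place: I would choose the block boundaries so that the cut falls on (the endpoints of) a chord of maximum span, which lets me realign the blocks to restore the even parity of the block cycle and, with a little extra care at the seam, economise one colour so that the product of the two factors stays strictly below $2s$. The delicate bookkeeping is entirely concentrated in this seam analysis; away from the cut the decomposition and the two colourings are immediate, so I expect controlling the wrap-around chords to be the only real difficulty, after which $\chi(D)<2\cdot\spa_C(D)$ follows.
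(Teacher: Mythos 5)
Your overall instinct --- that every edge of $D$ joins vertices at cyclic distance at most $s=\spa_C(D)$ along $C$, and that one should exploit this locality --- is exactly the right one, but the block-decomposition you build on it has three genuine gaps. First, consecutive blocks of length $s$ only tile $C$ when $s$ divides $n$; otherwise the last block has length $r<s$ and an edge of span up to $s$ leaving the penultimate block can jump \emph{over} it into the first block, so the quotient on blocks is no longer a cycle and $\chi(D_2)\le 2$ fails (you would need $3$, giving $3s$). Second, the parity of the number of blocks is forced by $n$ and $s$; ``realigning'' the boundaries at a maximum-span chord cannot change it, so the odd case is not actually handled. Third, and most importantly, even in the friendliest case ($s\mid n$, evenly many blocks) the product bound of Lemma~\ref{lem:decomp} gives $\chi(D)\le s\cdot 2=2s$, whereas the lemma asserts the strict inequality $\chi(D)<2s$; ``economising one colour'' in a product colouring means exhibiting an unused pair of colours, and no argument for that is given. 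Since the statement is applied later in the form ``$\chi(D_1)\le 4k-7$'' (i.e.\ the strict bound is used at full strength), this last point is not cosmetic.

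The paper avoids all of this by writing down a single explicit colouring rather than a product of two. With $n=k\ell+r$, $k\ge 2$, $r<\ell$ (the case $n<2\ell$ being trivial), colour $v_i$ by $i\bmod\ell$ for $1\le i\le k\ell$ and give each of the remaining $r$ vertices a fresh private colour; this uses $\ell+r\le 2\ell-1<2\ell$ colours, and properness follows because all neighbours of $v_i$ lie among $v_{i-\ell},\dots,v_{i+\ell}$ while two same-coloured vertices of the main part have indices differing by a multiple of $\ell$ and the leftover vertices conflict with nobody. If you want to keep your two-factor decomposition, you would have to repair the short-block and parity problems and still find a way to drop below $2s$; the direct modular colouring sidesteps all three difficulties at once, so I would recommend switching to it.
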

\begin{proof}
Set $C=(v_1,\ldots,v_n,v_1)$ and set $\ell=\spa_C(D)$.
If $n<2\ell$, then the result trivially holds. Let us assume that $n=k \ell + r$ with $k\geq 2$ and $r < \ell$. Consider the following colouring. For any $1 \leq i\leq k \ell$, let us colour $v_i$ with colour $i-\left\lfloor i/\ell \right\rfloor \ell$. For any $1<t \leq r$, let us colour $v_{k\ell +t}$ with $\ell+t-1$. This colouring uses the  $\ell+r$ colours of $\{0, \dots , \ell+r-1\}$.

Moreover, for any $1\leq i \leq n$, all neighbours (in-neighbours and out-neighbours) of $v_i$ belong to $\{v_{i-\ell},\ldots,$ $v_{i-1}\} \cup \{v_{i+1},\ldots,v_{i+\ell}\}$ (all indices must be taken moduo $n$), for otherwise there would be a chord with span strictly larger than $\ell$.
Hence, the colouring is proper.
\end{proof}

Let $A \subseteq V(D)$, let $N(A) \subseteq V(D) \setminus A$ be the set of vertices not in $A$ that are adjacent to some vertex in $A$.

\begin{lemma}\label{lem:combine}
Let $D$ be a digraph and let $(A,B)$ be a partition of $V(D)$.
Then $$\chi(D)= \max \{ \chi(D[A]) + |N(A)|, \chi(D[B])\}.$$
\end{lemma}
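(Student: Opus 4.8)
The plan is to prove the bound by exhibiting an explicit proper colouring that attains it; the only effortless direction is $\chi(D)\ge \chi(D[B])$, which holds because $D[B]$ is an induced subdigraph of $D$, so the substantive work lies entirely in the upper bound $\chi(D)\le \max\{\chi(D[A])+|N(A)|,\,\chi(D[B])\}$. Set $k=\max\{\chi(D[A])+|N(A)|,\,\chi(D[B])\}$, and I would aim to colour $V(D)$ properly using the palette $\{1,\dots,k\}$.

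First I would colour $D[B]$ properly using only the colours $\{1,\dots,\chi(D[B])\}$, which is legitimate since $\chi(D[B])\le k$. Because $N(A)\subseteq B$, the vertices of $N(A)$ now carry at most $|N(A)|$ distinct colours. The key counting observation is that $k-|N(A)|\ge \chi(D[A])$, so at least $\chi(D[A])$ colours of $\{1,\dots,k\}$ occur on no vertex of $N(A)$. I would reserve a set $R$ of exactly $\chi(D[A])$ such colours and then colour $D[A]$ properly using only the colours of $R$, which is possible by the definition of $\chi(D[A])$.

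It then remains to check that the combined colouring of $D$ is proper. Arcs with both endpoints in $A$, respectively both endpoints in $B$, are handled by the two partial colourings. For an arc joining $A$ to $B$, its endpoint in $B$ must lie in $N(A)$ by the very definition of $N(A)$; that endpoint avoids the colours of $R$, whereas its $A$-endpoint is coloured from $R$, so the two colours differ. Hence no arc is monochromatic and $\chi(D)\le k$. The hard part, such as it is, is the colour-reservation step: guaranteeing a block of $\chi(D[A])$ colours disjoint from those appearing on $N(A)$, which is exactly where the summand $\chi(D[A])+|N(A)|$ enters. Once this is in place, the cross-cut verification is routine, since the defining property of $N(A)$ is precisely that every $A$--$B$ arc meets $N(A)$.
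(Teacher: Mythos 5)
Your proof is correct and follows essentially the same route as the paper: colour $D[B]$ first, observe that $N(A)$ occupies at most $|N(A)|$ colours, and give $A$ a palette disjoint from those (the paper merely permutes the colours of $B$ so that $N(A)$ uses an initial segment, rather than selecting unused colours as you do). Note that, like the paper's own argument, yours only establishes the inequality $\chi(D)\le\max\{\chi(D[A])+|N(A)|,\chi(D[B])\}$ --- which is all the lemma is ever used for, the stated equality being false in general.
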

\begin{proof}
Let us consider a proper colouring of $D[B]$ with colour set $\{1, \dots, \chi(D[B])\}$. W.l.o.g., vertices in $N(A)$ have received colours in $\{1,\ldots,|N(A)|\}$. Let us colour $D[A]$ using colours in $\{|N(A)|+1,\ldots,|N(A)|+\chi(D[A])\}$. We obtain a proper colouring of $D$ using $\max \{ \chi(D[A]) + |N(A)|, \chi(D[B])\}$ colours.
\end{proof}

\begin{lemma}\label{lem:neighbour}
Let $D$ be a  digraph containing no subdivision of $C(k,k)$ and having a Hamiltonian directed cycle $C=(v_1,\ldots,v_n,v_1)$.
Assume that $D$ contains a chord $v_iv_j$ with span at least $2k-2$ and let $A=\{v_{i+1},\ldots,v_{j-1}\}$ and $B=\{v_{j+1},\ldots,v_{i-1}\}$ (indices are taken modulo $n$). Then $|N(A)| \leq 2k+1$ and $|N(B)| \leq 2k+1$.
\end{lemma}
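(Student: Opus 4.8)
The plan is to reduce the statement to a \emph{locality} claim about the chords joining $A$ and $B$. Since $A$, $B$ and $\{v_i,v_j\}$ partition $V(D)$, we have $N(A)\subseteq B\cup\{v_i,v_j\}$, and moreover $v_i,v_j\in N(A)$ because of the Hamiltonian arcs $v_iv_{i+1}$ and $v_{j-1}v_j$; also every adjacency between an interior vertex of $B$ and $A$ must be realised by a chord. Hence it suffices to bound $|N(A)\cap B|$, and symmetrically $|N(B)\cap A|$. I denote the two arcs of $C$ cut off by $v_i,v_j$ as the $A$-arc $v_i\to v_{i+1}\to\cdots\to v_j$, of length $L_A=j-i$, and the $B$-arc $v_j\to v_{j+1}\to\cdots\to v_i$, of length $L_B=n-j+i$. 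The hypothesis $\spa_C(v_iv_j)\geq 2k-2$ says exactly that $\min(L_A,L_B)\geq 2k-2$, so \emph{both} arcs are long. For $v_q\in B$ I write $\beta_j(v_q),\beta_i(v_q)$ for its distances to $v_j$ and to $v_i$ along the $B$-arc (so $\beta_j(v_q)+\beta_i(v_q)=L_B$), and $\alpha_i(v_p),\alpha_j(v_p)$ for the analogous quantities of a vertex $v_p$ on the $A$-arc.

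The heart of the argument is the claim that every $v_q\in B$ adjacent to some $v_p\in A$ satisfies $\beta_j(v_q)\leq k-1$ or $\beta_i(v_q)\leq k-1$, i.e.\ it lies within $k-1$ steps of $v_i$ or of $v_j$ along the $B$-arc. To prove it I treat the two orientations of the chord $v_pv_q$ separately, and in each case exhibit two pairs of internally disjoint dipaths with common endpoints, built from the chord $v_iv_j$, the chord $v_pv_q$, and segments of the Hamiltonian cycle. For the orientation $v_p\to v_q$, the dipaths $v_i\to v_j\to\cdots\to v_q$ (chord $v_iv_j$ then the $B$-arc) and $v_i\to\cdots\to v_p\to v_q$ (the $A$-arc then the chord $v_pv_q$) are internally disjoint $(v_i,v_q)$-dipaths of lengths $1+\beta_j(v_q)$ and $1+\alpha_i(v_p)$; as $D$ contains no subdivision of $C(k,k)$, one of them has length $<k$. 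A second pair of $(v_p,v_j)$-dipaths, namely $v_p\to\cdots\to v_j$ along the $A$-arc and $v_p\to v_q\to\cdots\to v_i\to v_j$, yields a second inequality relating $\alpha_j(v_p)$ and $\beta_i(v_q)$. The key point making the disjointness automatic is that routing one member of each pair through the chord $v_iv_j$ (or $v_pv_q$) forces the two dipaths to diverge at the very first arc. The reverse orientation $v_q\to v_p$ is handled by the mirror constructions.

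Finally I combine the two inequalities obtained in each orientation: since $\alpha_i(v_p)+\alpha_j(v_p)=L_A\geq 2k-2$, the two ``short $A$-side'' alternatives cannot hold together, which forces a ``short $B$-side'' alternative and pins $v_q$ within $k-1$ of $v_i$ or of $v_j$, proving the locality claim. The $B$-vertices with $\beta_j\leq k-1$ are $v_{j+1},\dots,v_{j+k-1}$ and those with $\beta_i\leq k-1$ are $v_{i-k+1},\dots,v_{i-1}$, so $|N(A)\cap B|\leq 2(k-1)$ and $|N(A)|\leq 2(k-1)+2=2k\leq 2k+1$. The bound on $|N(B)|$ follows by reading the very same four path-constructions as constraints on the $A$-endpoint $v_p$ and combining them through $L_B\geq 2k-2$, which confines each such $v_p$ to within $k-1$ of $v_i$ or $v_j$ along the $A$-arc. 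I expect the main obstacle to be the case analysis and length bookkeeping: ensuring that in every case both members of a pair are genuine internally disjoint dipaths and that the resulting inequalities interlock through the span hypothesis; once that is in place the final count is routine.
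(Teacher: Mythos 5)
Your proposal is correct and follows essentially the same route as the paper: the two pairs of internally disjoint dipaths you build from the chord $v_iv_j$, the chord $v_pv_q$ and the two arcs of $C$ are exactly the paper's forbidden configurations (1) and (2), and the combination step via $\min(L_A,L_B)\ge 2k-2$ matches the paper's use of $j\ge 2k-1$ and $j\le n-2k+3$ to confine both endpoints of every $A$--$B$ chord to within about $k$ of $v_i$ or $v_j$. The resulting count ($\le 2k$, even slightly sharper than the stated $2k+1$) is the same.
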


\begin{proof}
W.l.o.g., assume that $D$ has a chord $v_1v_j$ with $2k-1\leq j \leq n-2k+3$.


Assume first that $v_av_b$ is an arc from $A$ to $B$.
\begin{itemize}
\item[(1)] we cannot have $a\leq j-k$ and  $b \leq n-k+1$, for otherwise the two dipaths $C[v_a,v_j]$ and $(v_a,v_b)\odot C[v_b,v_1]\odot (v_1,v_j)$ have length at least $k$ and so their union is a subdivision of  $C(k,k)$, a contradiction.

\item[(2)] we cannot have $a\geq k$ and  $b \geq j+k-1$, for otherwise the two dipaths $C[v_1,v_a]\odot (v_a,v_b)$ and $(v_1,v_j) \odot C[v_j,v_b]$ have length at least $k$ and so their union is a subdivision of  $C(k,k)$, a contradiction.
\end{itemize}
Since $j\geq 2k-1$, either $a\leq j-k$ or $a\geq k$, so $v_b\in \{v_{j+1},\ldots,v_{j+k-2}\} \cup \{v_{n-k+2},\ldots,v_n\}$. Similarly, since $j\leq n-2k+3$, either
$b \leq n-k+1$ or  $b \geq j+k-1$, so $v_a\in \{v_2, \dots, v_{k-1}\} \cup \{v_{j-k+1} , \dots , v_{j-1}\}$.

Analogously, if  $v_bv_a$ is an arc from $B$ to $A$, we obtain that $v_a\in \{v_2, \dots, v_{k}\} \cup \{v_{j-k+2} , \dots , v_{j-1}\}$ and
$v_b\in \{v_{j+1},\ldots,v_{j+k-2}\} \cup \{v_{n-k+3},\ldots,v_n\}$.

Therefore $N(A)\subseteq \{v_1, \dots, v_k\} \cup \{v_{j-k+1} , \dots , v_{j}\}$, and $N(B)\subseteq \{v_j, \dots , v_{j+k-2}\} \cup \{v_{n-k+2}, \dots , v_n, v_1\}$.
Hence $|N(A)| \leq 2k+1$ and  $|N(B)| \leq 2k+1$.
\end{proof}

\begin{theorem}
Let $D$ be a digraph and let $k\geq 1$ be an integer.
If $D$ has a Hamiltonian directed cycle  and $\chi(D)>6k-6$, then $D$ contains a subdivision of a $C(k,k)$. In other words, $\phi(k,k)\leq 6k-6$.
\end{theorem}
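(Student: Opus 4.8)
The plan is to argue by induction on $n=|V(D)|$, using the three preparatory lemmas, and to prove the contrapositive: whenever $D$ has a Hamiltonian directed cycle $C=(v_1,\ldots,v_n,v_1)$ and contains no subdivision of $C(k,k)$, then $\chi(D)\le 6k-6$. First I would dispose of the case where all chords are short. If $\spa_C(D)<2k-2$, then Lemma~\ref{lem:longChord} (or, when $D$ has no chord at all, the trivial bound $\chi(\vec{C}_n)\le 3$) gives $\chi(D)<2(2k-2)=4k-4\le 6k-6$, so there is nothing to prove. Hence I may assume that $D$ has at least one chord of span at least $2k-2$.

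In that case I would select such a long chord $v_iv_j$ and split $C$ into its two arcs $A$ and $B$ as in Lemma~\ref{lem:neighbour}, so that $|N(A)|,|N(B)|\le 2k+1$. The key structural observation is that $D-A$ still has a Hamiltonian directed cycle, namely the chord $v_i\to v_j$ followed by the sub-dipath of $C$ running from $v_j$ all the way around to $v_i$; since $D-A$ has strictly fewer vertices and inherits the property of containing no subdivision of $C(k,k)$, the induction hypothesis gives $\chi(D-A)\le 6k-6$. Applying Lemma~\ref{lem:combine} to the partition $(A,V(D)\setminus A)$ then yields
$$\chi(D)\le \max\{\chi(D[A])+|N(A)|,\ \chi(D-A)\}\le \max\{\chi(D[A])+2k+1,\ 6k-6\},$$
so the whole problem collapses to a single bound on the inner part: it suffices to show $\chi(D[A])+|N(A)|\le 6k-6$, i.e. $\chi(D[A])\le 4k-7$ (checking, for small $k$, the sharper fact that $N(A)$ meets only the $2k$ vertices clustered near the two ends of the chord).

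To attack $\chi(D[A])$ I would not recurse blindly but choose the long chord $v_iv_j$ so that its inner arc $A$ is inclusion-minimal among the inner arcs of all chords of span at least $2k-2$. Note that $A$ carries the Hamiltonian directed subpath $v_{i+1}\to\cdots\to v_{j-1}$ of $C$. The minimality instantly forces every \emph{forward} chord inside $A$ to have span at most $2k-3$, after which colouring the vertices of $A$ by their position modulo $2k-2$ is proper on all forward chords and path arcs. If one can also certify that every chord inside $A$ is short, this gives $\chi(D[A])\le 2k-2$, and then $\chi(D[A])+|N(A)|\le (2k-2)+2k=4k-2\le 6k-6$ for $k\ge 2$, closing the induction (the genuinely degenerate $k=1$ being handled separately).

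The step I expect to be the main obstacle is precisely this control of $\chi(D[A])$, and the difficulty is twofold. First, $A$ only inherits a Hamiltonian directed \emph{path}, not a directed cycle, so Lemmas~\ref{lem:longChord} and~\ref{lem:neighbour} do not apply to it verbatim and the colouring must be done by hand. Second, and more seriously, minimality of $A$ bounds only the forward chords inside $A$: a \emph{backward} chord inside $A$ has its forward-interior wrapping around the whole cycle, so it escapes the minimality argument, and a single long backward chord does not by itself create a $C(k,k)$. The crux is therefore to rule out large spans among backward chords of $A$, and this is exactly where the hypothesis "no subdivision of $C(k,k)$" must be used in full strength — a sufficiently rich collection of long backward chords inside $A$ would provide two internally disjoint decreasing dipaths between a common source and sink, i.e. a subdivision of $C(k,k)$. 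Making this aggregate obstruction quantitative (possibly via a second, nested induction that cuts $D[A]$ along its own long chords, or via a companion bound for digraphs with a Hamiltonian directed path) is the heart of the argument; the outer skeleton above — dichotomy on $\spa_C(D)$, excision of the inner arc, recursion on the surviving Hamiltonian cycle, and recombination through Lemma~\ref{lem:combine} — is the routine part.
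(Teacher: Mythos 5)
Your outer skeleton coincides with the paper's: dismiss the case $\spa_C(D)<2k-2$ by Lemma~\ref{lem:longChord}, excise the inner arc $A$ of a long chord, recurse on $D-A$ (which keeps a Hamiltonian directed cycle through the chord), and recombine via Lemma~\ref{lem:combine} with $|N(A)|\leq 2k+1$ from Lemma~\ref{lem:neighbour}; the reduction to $\chi(D[A])\leq 4k-7$ is also the right target. But the step you flag is a genuine gap, and inclusion-minimality of the inner arc cannot close it. A chord of $D[A]$ oriented against $C$ has its forward interior wrapping through $B$, so it is incomparable with $A$ and escapes your minimality; it can have span up to $n/2$, it does not by itself yield a $C(k,k)$-subdivision, and a single such chord already breaks the colouring by position modulo $2k-2$. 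Your sketch offers no mechanism for excluding these chords.

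The paper closes the gap by minimizing a \emph{number} rather than a set: it picks a chord whose span $s=d_C(v_i,v_j)$ is minimum among all chords of span at least $2k-2$. Since $d_C$ is symmetric and blind to the orientation of the chord, this minimality constrains backward and forward chords alike. The decisive observation is that on whichever of the two sides of the chord has at most $n/2$ vertices, the span of every chord equals its index difference along $C$, so span-minimality forbids \emph{all} long chords on that side regardless of their orientation, and Lemma~\ref{lem:longChord} --- applied to that side with the Hamiltonian (possibly non-directed) cycle formed by the corresponding subpath of $C$ together with the chord --- gives chromatic number at most $4k-7$ there. This forces a case distinction absent from your sketch: when the short side is the one \emph{not} spanned by the chord, the direct colouring goes to $D[B]$ and the recursion must instead be carried out on $D_1=D[v_1,\dots,v_{s+1}]$, whose Hamiltonian directed cycle is obtained by \emph{reversing} the chord; one then checks that the reversed arc creates no $C(k,k)$-subdivision by rerouting it through $C[v_{s+1},v_1]$. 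These two devices --- direction-blind span-minimality with the short-side observation, and the reversed-chord recursion --- are exactly the missing heart of the argument. (The paper also settles $k=2$ separately via Theorem~\ref{thm:C(2,2)} before assuming $k\geq 3$, which you would need as well, since the bound $4k-7$ and Lemma~\ref{lem:longChord} degenerate for small $k$.)
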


\begin{proof}
If $k=2$, then we have the result by Theorem~\ref{thm:C(2,2)}. Henceforth, we assume $k\geq 3$.

For sake of contradiction, let us consider a counterexample (i.e a digraph $D$ with a Hamiltonian directed cycle, $\chi(D) > 6k-6$ and no subdivision of $C(k,k)$)
with the minimum number of vertices.

Let $C=(v_1,\ldots,v_n,v_1)$ be a Hamiltonian directed cycle of $D$.
By Lemma~\ref{lem:longChord} and because $\chi(D)\geq 4k-4$, $D$ contains a chord of span at least $2k-2$. Let $s$ be the minimum span of a chord of span at least $2k-2$ and consider a chord of span $s$. W.l.o.g., this chord is $v_1v_{s+1}$. Let $D_1=D[v_1,\ldots,v_{s+1}]$ and let $D_2=D[v_{s+1},\ldots, v_n, v_1]$. By minimality of the span of $v_1v_{s+1}$, either $D_1$ or $D_2$ contains no chord of span at least $2k-2$.
There are two cases to be considered.
\begin{itemize}
\item Assume first that $D_1$ contains no chord of span at least $2k-2$. By Lemma~\ref{lem:longChord}, $\chi(D_1)\leq 4k-7$. Let $A=\{v_2,\ldots,v_s\}$. We have $\chi(D[A])\leq \chi(D_1)\leq 4k-7$. Moreover, by Lemma~\ref{lem:neighbour}, $|N(A)|\leq 2k+1$.

Now  $D_2$ has a Hamiltonian directed cycle and contains no subdivision of $C(k,k)$. Therefore, $\chi(D_2)\leq 6k-6$ since $D$ has been chosen minimum.
Finally, by Lemma~\ref{lem:combine}, since $\chi(D[A]) + |N(A)| \leq 6k-6$ and  $\chi(D_2)\leq 6k-6$, we get that $\chi(D)\leq 6k-6$, a contradiction.

\item Assume now that $D_2$ contains no chord of span at least $2k-2$.
Set $B= \{v_{s+1}, \dots , v_n\}$. Similarly as in the previous case, we have $\chi(D[B])\leq \chi(D_2)\leq 4k-7$ and $|N(B)|\leq 2k+1$.

Let $D'_1$ be the digraph obtained from $D_1$ by reversing the arc $v_1v_s$. Clearly $D'_1$ is Hamiltonian.
Moreover, $D'_1$ contains no subdivision of a $C(k,k)$ ; indeed if it had such a subdivision $S$, replacing the arc $v_sv_1$ by $C[v_s,v_1]$ if it is in $S$, we obtain a subdivision of $C(k,k)$ in $D$, a contradiction.
Therefore $\chi(D_1)=\chi(D'_1)\leq 6k-6$,  by minimality of $D$.

Hence by Lemma~\ref{lem:combine}, since $\chi(D[B]) + |N(B)| \leq 6k-6$ and  $\chi(D_1)\leq 6k-6$, we get that $\chi(D)\leq 6k-6$, a contradiction.
\end{itemize}
\end{proof}

\subsection{Better bound when $\ell =1$}

We now improve on the bound of Theorem~\ref{thm:Ckk} when $\ell=1$.
To do so, reduce the problem to digraphs having a Hamiltonian directed cycle. 
Recall that $$\phi(k,\ell) = \max\{\chi(D) \mid D\in \SForb(C(k,\ell)) ~\mbox{and $D$ has a Hamiltonian directed cycle}\}.$$

\begin{theorem}\label{thm:k1}
Let $k$ be an integer greater than $1$.
$\chi(\SForb(C(k,1))\cap {\cal S})\leq \max\{2k-4, \phi(k,1)\}$.
\end{theorem}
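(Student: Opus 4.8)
The plan is to argue by induction on $|V(D)|$, taking a vertex-minimal counterexample $D$ that is strong, lies in $\SForb(C(k,1))$, and has $\chi(D) > \max\{2k-4, \phi(k,1)\}$. The guiding observation is that a subdivision of $C(k,1)$ is nothing but a pair of internally disjoint dipaths with the same initial vertex $u$ and the same terminal vertex $v$, one of length at least $k$ (the other of length at least $1$, which is automatic). In particular, whenever $ab$ is an arc of $D$ and there is an $(a,b)$-dipath of length at least $k$, we already have such a subdivision. Since $\chi(D) > \phi(k,1)$, the digraph $D$ cannot have a Hamiltonian directed cycle; so it suffices to show that a minimal counterexample must be Hamiltonian, or equivalently that the non-Hamiltonian case forces $\chi(D) \le 2k-4$.

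First I would fix a longest directed cycle $C = (v_1, \ldots, v_n, v_1)$ and record the structural consequences of $C(k,1)$-freeness. For any chord $v_iv_j$, pairing it with the forward arc $C[v_i, v_j]$ yields two internally disjoint $(v_i, v_j)$-dipaths; hence the forward $C$-distance from $v_i$ to $v_j$ is at most $k-1$, since otherwise $C[v_i, v_j]$ has length at least $k$ and a subdivision appears. The same pairing applies to an \emph{ear} $P$ (a dipath with both ends on $C$ and all internal vertices off $C$): its forward span is at most $k-1$, and because $C$ is longest the cycle $P \odot C[v_j, v_i]$ has length at most $n$, so $|P|$ is also bounded by that forward span. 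Thus every chord and every ear is \emph{short and forward}: relative to $C$ the digraph has forward span at most $k-1$.

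With this in hand I would treat the two cases. If $C$ is Hamiltonian, then $\chi(D) \le \phi(k,1)$ by definition of $\phi$, contradicting the choice of $D$. If $C$ is not Hamiltonian, then I would exploit the bounded forward span in one of two ways. One route deletes the short interior segment $A = \{v_{i+1}, \ldots, v_{j-1}\}$ cut off by an ear $P$ from $v_i$ to $v_j$: since $P$ reroutes $v_i$ to $v_j$, the remaining digraph stays strong and, being a subdigraph of $D$, still lies in $\SForb(C(k,1))$, so induction bounds its chromatic number; one then glues $A$ back using Lemma~\ref{lem:combine}, bounding $\chi(D[A]) \le |A| \le k-2$ and the boundary $|N(A)|$ by an argument in the spirit of Lemma~\ref{lem:neighbour}. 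An alternative route avoids induction and colours $D$ directly by a position function modulo roughly $2k-3$, in the manner of Lemma~\ref{lem:longChord}, using that every arc moves forward by a bounded amount along $C$.

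The main obstacle is precisely the non-Hamiltonian case, and within it the control of arcs incident to off-cycle vertices. The clean bounds on chords and ears above only constrain arcs whose endpoints interact nicely with $C$; to make either the deletion-and-gluing argument or the modular colouring go through with the tight constant $2k-4$, I must show that arcs between off-cycle vertices, and from off-cycle vertices back to $C$, also respect the bounded forward span, equivalently that $|N(A)|$ stays small enough that $\chi(D[A]) + |N(A)| \le 2k-4$. Establishing this uniform forward-span control, rather than the per-chord bound, is the delicate point, and is where the hypothesis $\ell = 1$ (so that a single arc suffices as the short block) is used most heavily.
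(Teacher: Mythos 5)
Your proposal correctly reduces to the case where the longest directed cycle $C$ is not Hamiltonian, and your observation that every chord and every ear of $C$ must be ``short and forward'' (forward span and length at most $k-1$) is sound. But the proof stops exactly at the step you yourself flag as delicate: controlling the arcs incident to off-cycle vertices. Neither of your two routes closes this. For the deletion-and-gluing route, you would need $\chi(D[A]) + |N(A)| \leq \max\{2k-4,\phi(k,1)\}$, and $N(A)$ can a priori contain off-cycle vertices about which the chord/ear bounds say nothing; you give no mechanism to bound it. For the modular-colouring route, you would need every arc of $D$ -- including arcs between off-cycle vertices and arcs from off-cycle vertices back to $C$ -- to respect a cyclic order with bounded forward jump, and no such order is constructed. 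So as written this is a genuine gap, not a routine verification.

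It is worth noting that the paper resolves the non-Hamiltonian case by a quite different mechanism, which does not bound spans at all. After reducing to a $2$-connected minimal counterexample, it picks an arc $vw$ leaving $C$ and an oriented path in $D-v$ from $C-v$ to $w$. If that path can be made to start with a forward arc, Lemma~\ref{lem:k1} (two dipaths from distinct vertices of a cycle of length at least $2k-3$ to a common off-cycle vertex force a subdivision of $C(k,1)$; this is where the $2k-4$ in the bound is used, via Bondy's theorem) yields a contradiction. Otherwise, the set $W$ of vertices reachable from $w$ inside $D-C$ has $w$ as an out-generator, and a least-common-ancestor argument in a BFS-tree of $W$ shows all arcs from $W$ back to $C-v$ land on a \emph{single} vertex $y$. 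Thus $\{v,y\}$ is a $2$-cut; adding the arcs $vy$ and $yv$ to the two sides produces two strictly smaller strong digraphs still free of $C(k,1)$-subdivisions, and minimality plus the clique-cutset $\{v,y\}$ finishes the proof. If you want to salvage your approach, the missing ingredient is precisely this kind of statement forcing the off-cycle part to attach to $C$ at very few vertices; without it, the uniform forward-span control you need is not available.
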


To prove this theorem, we shall use the following lemma. 

\begin{lemma}\label{lem:k1}
Let $D$ be a digraph containing a directed cycle $C$ of length at least $2k-3$.
If there is a vertex $y$ in $V(D-C)$ and two distinct vertices $x_1,x_2\in V(C)$ such that for $i=1,2$, there is a $(x_i,y)$-dipath $P_i$ in $D$ with no internal vertices in $C$, then $D$ contains a subdivision of $C(k,1)$.
\end{lemma}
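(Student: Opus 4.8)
The plan is to first reformulate the target: by the paper's convention, a subdivision of $C(k,1)$ is precisely a pair of internally disjoint dipaths from a common initial vertex $s$ to a common terminal vertex $t$, one of length at least $k$ and the other of length at least $1$ (the source $s$ and sink $t$ being the two block-boundary vertices). So the goal is to manufacture such a pair from the cycle $C$ and the two dipaths $P_1,P_2$.

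The first step is to replace $P_1,P_2$ (which a priori may cross each other) by two internally disjoint dipaths ending at a common vertex off $C$. Traverse $P_1$ from $x_1$ and let $w$ be the first vertex of $P_1$ that also lies on $P_2$; this is well defined since $y\in V(P_1)\cap V(P_2)$. Set $R_1=P_1[x_1,w]$ and $R_2=P_2[x_2,w]$. Since no vertex of $P_1$ strictly before $w$ lies on $P_2$, the dipaths $R_1$ and $R_2$ meet only in $w$. Moreover, because the $P_i$ have no internal vertex on $C$ while $x_1\neq x_2$ and $y\notin V(C)$, one checks that $x_1\notin V(P_2)$ and $x_2\notin V(P_1)$, whence $w\neq x_1$, $w\neq x_2$, and $w\notin V(C)$. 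Thus $R_1$ and $R_2$ each have length at least $1$, their interiors avoid $C$, and $R_i$ meets $C$ only in $x_i$.

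Next I would use $C$ to bridge $x_1$ and $x_2$. As $C$ is directed, $x_1$ and $x_2$ split it into two dipaths $C[x_1,x_2]$ and $C[x_2,x_1]$ of lengths $a$ and $b$ with $a+b=|C|\geq 2k-3$; hence $\max\{a,b\}\geq k-1$. Assume without loss of generality that $a\geq k-1$ (otherwise exchange the roles of $x_1$ and $x_2$ and argue symmetrically with $b$). Consider the two $(x_1,w)$-dipaths $R_1$ and $C[x_1,x_2]\odot R_2$. The concatenation is a genuine dipath because $R_2$ meets $C$ only in $x_2$, and the two dipaths are internally disjoint because the interior of $R_1$ avoids both $C$ and $R_2$, while $C[x_1,x_2]\odot R_2$ uses only vertices of $C$ and of $R_2$. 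Their lengths are at least $1$ and at least $a+1\geq k$ respectively, so their union is a subdivision of $C(k,1)$, as required.

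The only real difficulty is bookkeeping rather than insight: one must certify that after splicing an arc of $C$ onto $R_2$ the result is an honest vertex-disjoint pair of dipaths, which follows from the two facts that $R_1,R_2$ share only $w$ and that $R_2$ touches $C$ only at $x_2$. The numerical hypothesis $|C|\geq 2k-3=2(k-1)-1$ is exactly what forces one of the two arcs of $C$ between $x_1$ and $x_2$ to have length at least $k-1$, so that appending a dipath $R_i$ of length at least $1$ attains the required block length $k$; the bound is therefore tight for this argument.
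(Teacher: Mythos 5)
Your proof is correct and follows essentially the same route as the paper's: both arguments observe that one of the two arcs of $C$ between $x_1$ and $x_2$ has length at least $k-1$, truncate $P_1$ and $P_2$ at their first meeting point to obtain internally disjoint dipaths, and concatenate the long arc of $C$ with one truncated dipath to get the length-$\geq k$ block. The only (immaterial) difference is that the paper takes the first vertex along $P_2$ lying on $P_1$, whereas you take the first vertex along $P_1$ lying on $P_2$; your write-up simply makes the disjointness bookkeeping more explicit.
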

\begin{proof}
Since $C$ has length at least $2k-3$, then one of $C[x_1,x_2]$ and $C[x_2,x_1]$ has length at least $k-1$.
Without loss of generality, assume that $C[x_1,x_2]$ has length at least $k-1$.
Let $z$ be the first vertex along $P_2$ which is also in $P_1$.
Then the union of $C[x_1,x_2]\odot P_2[x_2, z]$ and $P_1[x_1,z]$ is a subdivision of $C(k,1)$.
\end{proof}


\begin{proof}[Proof of Theorem~\ref{thm:k1}]
Suppose for a contradiction that there is a strong digraph $D$ with chromatic number greater than $\max\{2k-4, \phi(k,1)\}$ that
contains no subdivision of $C(k,1)$. Let us consider the smallest such counterexample. 

All $2$-connected components of $D$ are strong, and one of them has chromatic number $\chi(D)$.
Hence, by minimality, $D$ is $2$-connected.
Let $C$ be a longest directed cycle in $D$.  By Bondy's theorem
(Theorem~\ref{thm:bondy}), $C$ has length at least $2k-3$, and by definition of $\phi(k,1)$, $C$ is not Hamiltonian.

Because $D$ is strong, there is a vertex $v\in C$ with an out-neighbour $w\not \in C$.
Since $D$ is $2$-connected, $D-v$ is connected, so there is a (not necessarily directed) oriented path in $D-v$ between $C-v$ and $w$.
Let $Q=(a_1, \dots , a_q)$ be  such a path so that all its vertices except the initial one are in $V(D)\setminus V(C)$. By definition $a_q=w$ and $a_1\in V(C)\setminus \{v\}$.
\begin{itemize}
\item Let us first assume that $a_1a_2 \in A(D)$. 
Let $t$ be the largest integer such that there is a dipath from $C-v$ to $a_t$ in $D-v$. Note that $t>1$ by the hypothesis.
If $t=q$, then by Lemma~\ref{lem:k1}, $C$ contains a subdivision of $C(k,1)$, a contradiction.
Henceforth we may assume that $t<q$. By definition of $t$,
$a_{t+1}a_t$ is an arc.
Let $P$ be a shortest $(v, a_{t+1})$-dipath in $D$. Such a dipath exists because $D$ is strong.
By maximality of $t$, $P$ has no internal vertex in $(C-v) \cup Q[a_1,a_t]$.
Hence, $a_t \in D-C$ and there are an $(a_1,a_t)$-dipath and a $(v,a_t)$-dipath with no internal vertices in $C$.
Hence, by Lemma~\ref{lem:k1}, $D$ contains a subdivision of $C(k,1)$, a contradiction.

\item Now, we may assume that any oriented path $Q=(a_1, \dots , a_q)$ from $C-v$ to $w$ starts with a backward arc, i.e., $a_2a_1 \in A(D)$. Let $W$ be the set of vertices $x$ such that there exists a (not necessarily directed) oriented path from $w$ to $x$ in $D-C$. In particular, $w \in W$. 

By the assumption, all arcs between $C - v$ and $W$ are from $W$ to $C -v$. Since $D$ is strong, this implies that, for any $x \in W$, there exists a directed $(w,x)$-dipath in $W$. In other words, $w$ is an out-generator of $W$.  Let $T_w$ be a BFS-tree of $W$ rooted in $w$  (see definitions in Section~\ref{subsubsec:level}). 

Because $D$ is strong and $2$-connected, there must be a vertex $y \in C -v$ such that there is an arc $ay$ from a vertex $a\in W$ to $y$. 

For purpose of contradiction, let us assume that there exists $z \in C -y$ such that there is an arc $bz$ from a vertex $b\in W$ to $z$. Let $r$ be the least common ancestor of $a$ and $b$ in $T_w$. If $|C[y,z]|\geq k$, then $T_w[r,a] \odot (a,y) \odot C[y,z]$ and $T_w[r,b] \odot (b,z) $ is a subdivision of $C(k,1)$. If $|C[z,y]|\geq k$, then $T_w[r,a] \odot (a,y)$ and $T_w[r,b] \odot (b,z) \odot C[z,y]$ is a subdivision of $C(k,1)$. In both cases, we get a contradiction.

From previous paragraph and the definition of $W$, we get that all arcs from $W$ to $D\setminus W$ are from $W$ to $y \neq v$, and there is a single arc from $D \setminus W$ to $W$ (this is the arc $vw$). 
Note that, since $D$ is strong, this implies that $D-W$ is strong.

Let $D_1$ be the digraph obtained from $D-W$ by adding the arc $vy$ (if it does not already exist). 
$D_1$ contains no subdivision of $C(k,1)$, for otherwise $D$ would contain one (replacing the arc $vy$ by the dipath $(v,w) \odot T_w[w,a] \odot (a,y)$). 
Since $D_1$ is  strong (because $D-W$ is strong), by minimality of $D$, $\chi(D_1) \leq \max\{2k-4, \phi(k,1)\}$. 

Let $D_2$ be the digraph obtained from $D[W\cup\{v,y\}]$ by adding the arc $yv$.
$D_2$ contains no subdivision of $C(k,1)$, for otherwise $D$ would contain one (replacing the arc $yv$ by the dipath $C[y,v]$). Moreover, $D_2$ is strong, so by minimality of $D$, $\chi(D_2) \leq \max\{2k-4, \phi(k,1)\}$. 

Consider now $D^*$ the digraph $D_1\cup D_2$. It is obtained from $D$ by adding the two arcs $vy$ and $yv$ (if they did not already exist).
Since $\{v,y\}$ is a clique-cutset in $D^*$, we get $\chi(D^*) \leq \max\{\chi(D_1),\chi(D_2)\} \leq \max\{2k-4, \phi(k,1)\}$. 
But $\chi(D) \leq \chi(D^*)$, a contradiction.
\end{itemize}
\end{proof}

From Theorem~\ref{thm:k1}, one easily derives an upper bound on $\chi(\SForb(C(k,1))\cap {\cal S})$.
\begin{corollary}\label{cor:premier}
$\chi(\SForb(C(k,1))\cap {\cal S})\leq 2k-1$.
\end{corollary}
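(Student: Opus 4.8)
The plan is to combine Theorem~\ref{thm:k1} with an upper bound on $\phi(k,1)$. Since Theorem~\ref{thm:k1} gives $\chi(\SForb(C(k,1))\cap {\cal S})\leq \max\{2k-4,\phi(k,1)\}$, it suffices to prove $\phi(k,1)\leq 2k-1$; then the maximum is at most $2k-1$, which is exactly the claimed bound.

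To bound $\phi(k,1)$, I would fix a digraph $D\in\SForb(C(k,1))$ having a Hamiltonian directed cycle $C=(v_1,\ldots,v_n,v_1)$ and estimate its span $\spa_C(D)$. The key observation is that every chord has small forward length along $C$. Precisely, for a chord $a=v_iv_j$, let $f$ denote the length of the directed subpath $C[v_i,v_j]$, i.e.\ the forward arc-distance from $i$ to $j$ around $C$. If $f\geq k$, then the single arc $a$ and the dipath $C[v_i,v_j]$ are two internally disjoint $(v_i,v_j)$-dipaths of lengths $1$ and $f\geq k$, so their union is a copy of $C(f,1)$, which is a subdivision of $C(k,1)$; this contradicts $D\in\SForb(C(k,1))$. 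Hence $f\leq k-1$ for every chord, and since the span of $a$ is $\min\{f,n-f\}\leq f$, we conclude $\spa_C(D)\leq k-1$.

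Now I would apply Lemma~\ref{lem:longChord}, splitting on the value of $\spa_C(D)$. If $\spa_C(D)\geq 2$, then Lemma~\ref{lem:longChord} (the hypothesis ``at least one chord'' being satisfied) yields $\chi(D)<2\cdot\spa_C(D)\leq 2(k-1)$, so $\chi(D)\leq 2k-3\leq 2k-1$. If $\spa_C(D)\leq 1$, then every chord is a reverse arc $v_{i+1}v_i$, so the underlying graph of $D$ is the cycle $C_n$ and $\chi(D)\leq 3\leq 2k-1$ for $k\geq 2$. This case distinction also sidesteps the degenerate behaviour of the colouring in Lemma~\ref{lem:longChord} for span $1$. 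In all cases $\chi(D)\leq 2k-1$, so $\phi(k,1)\leq 2k-1$.

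Plugging this into Theorem~\ref{thm:k1} gives $\chi(\SForb(C(k,1))\cap {\cal S})\leq\max\{2k-4,2k-1\}=2k-1$, as desired. The derivation is short, and the only point requiring any care is the span estimate together with the small-span edge case; everything else is a direct invocation of Lemma~\ref{lem:longChord} and Theorem~\ref{thm:k1}. In fact the argument gives the slightly stronger $\phi(k,1)\leq 2k-3$, but $2k-1$ is all that the corollary needs.
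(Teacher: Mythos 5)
Your reduction to bounding $\phi(k,1)$ via Theorem~\ref{thm:k1}, and your observation that every chord of the Hamiltonian directed cycle has forward length at most $k-1$ (hence span at most $k-1$), are exactly the paper's first two steps. The divergence is in the last step, and that is where your argument breaks: Lemma~\ref{lem:longChord} is false as stated, so the inference ``$\spa_C(D)\leq k-1$ implies $\chi(D)<2(k-1)$'' is not available. A counterexample to that lemma is the circulant tournament on five vertices with arcs $v_iv_{i+1}$ and $v_iv_{i+2}$ (indices modulo $5$): it has the Hamiltonian directed cycle $(v_1,\dots,v_5,v_1)$, every chord has span $2$, yet its underlying graph is $K_5$, so $\chi=5\not<4$. (The colouring built in the lemma's proof gives $v_i$ and $v_{i\pm\spa_C(D)}$ the same colour, so it fails on any chord realizing the maximum span.) Consequently your intermediate bound $\chi(D)\leq 2k-3$, and the ``slightly stronger'' $\phi(k,1)\leq 2k-3$ you advertise at the end, are not established by this route.

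The gap is easy to close, and closing it lands you on the paper's own argument: since every arc $v_iv_j$ satisfies $j\in\{i+1,\dots,i+k-1\}$ modulo $n$, each vertex has out-degree and in-degree at most $k-1$, hence degree at most $2k-2$ in the underlying graph, and greedy colouring gives $\chi(D)\leq\Delta(D)+1\leq 2k-1$, which is all the corollary needs. If you want to keep a span-based phrasing, the correct general statement along these lines is $\chi(D)\leq 2\spa_C(D)+1$, not $\chi(D)<2\spa_C(D)$. Your separate treatment of the small-span case (no chord, or only reverse arcs) is fine but becomes unnecessary once the degree bound is used.
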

\begin{proof}
By Theorem~\ref{thm:k1}, it suffices to prove $\phi(k,1)\leq 2k-1$.

Let $D\in \SForb(C(k,1))$ with a Hamiltonian directed cycle  $C=(v_1, \dots , v_n,v_1)$.
Observe that if  $v_iv_j$ is an arc, then $j\in C[v_{i+1}, v_{i+k-1}]$ for otherwise the union of $C[v_i,v_j]$ and $(v_i,v_j)$ would be a subdivision of $C(k,1)$.
In particular, every vertex had both its in-degree and out-degree at most $k-1$, and so degree at most $2k-2$.
As $\chi(D)\leq \Delta(D)+1$, the result follows.
\end{proof}

The bound $2k-1$ is tight for $k=2$, because of the directed odd cycles.
However, for larger values of $k$,  we can get  a better bound on $\phi(k,1)$, from which one derives a slightly better one for $\chi(\SForb(C(k,1))\cap {\cal S})$.

\begin{theorem}\label{thm:phik1}
$\phi(k,1)\leq \max\{k+1, \frac{3k-3}{2}\}$.
\end{theorem}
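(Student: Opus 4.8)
The plan is to bound $\phi(k,1)$ directly by analyzing a digraph $D$ with a Hamiltonian directed cycle $C=(v_1,\dots,v_n,v_1)$ that contains no subdivision of $C(k,1)$, and to exploit the structure of its chords. As observed in the proof of Corollary~\ref{cor:premier}, any chord $v_iv_j$ must satisfy $j\in C[v_{i+1},v_{i+k-1}]$; otherwise $C[v_i,v_j]$ together with the arc $(v_i,v_j)$ would form a subdivision of $C(k,1)$. So every \emph{forward} chord is \emph{short}, of span at most $k-1$ along $C$. The naive degree bound gives only $2k-1$, so to improve it I would extract extra structural constraints forbidding too many short chords from clustering, then feed these into a sharper colouring argument.

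First I would classify the chords more finely. Writing a chord $v_iv_j$ as forward if $j$ lies in $C[v_{i+1},v_{i+k-1}]$ (the only possibility for a forward-direction chord), the key extra fact to establish is that one cannot have two chords $v_av_b$ and $v_cv_d$ whose ``jumps'' over $C$ combine with arcs of $C$ to create two internally disjoint dipaths between a common pair of vertices, one of length $\geq k$ and the other of length $1$. Concretely, I expect to show that if $v_iv_j$ and $v_pv_q$ are two chords, then the relative positions of $i,j,p,q$ along $C$ are severely restricted, because the long block of length $k$ (which can always be taken as a directed arc of $C$ spanning the complementary arc) together with the short block of length $1$ (a single chord) realizes $C(k,1)$ easily. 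In particular, I would argue that the out-neighbours of any vertex $v_i$ form a contiguous segment $\{v_{i+1},\dots,v_{i+k-1}\}$ of length at most $k-1$, and similarly for in-neighbours, and that overlaps of these segments across nearby vertices are constrained.

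The colouring itself should then be a refined interval/periodic colouring of $C$ in the spirit of Lemma~\ref{lem:longChord}, but tuned to the bound $\max\{k+1,(3k-3)/2\}$. I would colour the vertices $v_1,\dots,v_n$ by residues modulo a well-chosen period $m$ (roughly $m\approx 3(k-1)/2$), arranged so that any two vertices joined by a chord of span at most $k-1$ receive distinct colours; the point of taking $m$ near $3(k-1)/2$ rather than $2(k-1)$ is that the additional structural restrictions on which short chords can coexist allow colours to be reused more aggressively than the worst-case span bound $k-1$ would suggest. The small-$k$ regime, where $k+1$ exceeds $(3k-3)/2$ (i.e. $k\leq 5$), I would handle separately, possibly by direct verification or by the cruder bound, matching the maximum in the statement.

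The main obstacle will be the combinatorial case analysis establishing that short chords cannot be packed densely enough to force $2(k-1)$ colours; this is where the precise value $(3k-3)/2$ comes from, and it requires carefully ruling out configurations of two or three chords that would otherwise demand many distinct colours on a short segment of $C$. Getting the periodic colouring to respect \emph{all} surviving chord patterns simultaneously—rather than just the span bound—is the delicate step, and I expect the bulk of the work, and the source of the exact constant, to lie there rather than in the overall framework.
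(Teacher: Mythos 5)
Your starting observation is the same as the paper's (every chord $v_iv_j$ must have $j\in C[v_{i+1},v_{i+k-1}]$, so in- and out-degrees are at most $k-1$), but from there your plan diverges from what actually works, and the divergence hides a genuine gap. A periodic colouring with period $m\approx 3(k-1)/2$ runs into the wrap-around problem that already limits Lemma~\ref{lem:longChord} to $2\cdot\spa_C(D)$ rather than $\spa_C(D)+1$: when $n$ is not a multiple of $m$ you must spend up to $m-1$ extra colours on the residual segment, which lands you back near $2k$. You gesture at ``structural restrictions on which short chords can coexist'' to justify reusing colours more aggressively, but you never state what those restrictions are, and a single fixed colouring scheme cannot exploit them uniformly because the correct restriction depends on $n$.

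The idea you are missing is a \emph{degeneracy} statement coupled to $n$, used inside an induction on $n$. The paper first disposes of vertices with $d^+=1$ or $d^-=1$ (degree at most $k$, so removable: delete $v_i$, add the arc $v_{i-1}v_{i+1}$, which preserves both the Hamiltonian cycle and the absence of $C(k,1)$-subdivisions, and extend the colouring). Assuming $\delta^+,\delta^-\geq 2$, the key claim is that $d^+(v_i)+d^-(v_{i+1})\leq 3k-n-3$ for every $i$; this is proved by locating the first out-neighbour of $v_i$ and the last in-neighbour of $v_{i+1}$ along $C$ and building $C(k,1)$-subdivisions that use the Hamiltonian arc $(v_i,v_{i+1})$ as the length-one block. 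Since the theorem is trivial for $n\leq\max\{k+1,(3k-3)/2\}$, one may assume $n\geq (3k-1)/2$, so summing the claim over $i$ yields a vertex of degree at most $(3k-5)/2$, which is then removed as before and the induction closes. Note that the bound pairs $d^+(v_i)$ with $d^-(v_{i+1})$ of the \emph{next} vertex, not the in- and out-degree of a single vertex; your proposal contains no analogue of this, and without it there is no route to the constant $(3k-3)/2$.
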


\begin{proof}
For $k = 2$, the result holds because  $\phi(2,1) \leq \phi(2,2)\leq 3$ by Corollary~\ref{cor:C(2,2)}.

Let us now assume $k \geq 3$.
We prove by induction on $n$, that every digraph $D\in \SForb(C(k,1))$ with a Hamiltonian directed cycle  $C=(v_1, \dots , v_n,v_1)$
has chromatic number at most $\max\{k+1, \frac{3k-3}{2}\}$, the result holding trivially when $n\leq \max\{k+1, \frac{3k-3}{2}\}$.

Assume now that $n\geq \max\{k+1, \frac{3k-3}{2}\} +1$
All the indices are modulo $n$.
Observe that if  $v_iv_j$ is an arc, then $j\in C[v_{i+1}, v_{i+k-1}]$ for otherwise the union of $C[v_i,v_j]$ and $(v_i,v_j)$ would be a subdivision of $C(k,1)$.
In particular, every vertex had both its in-degree and out-degree at most $k-1$.

Assume that $D$ contains a vertex  $v_i$ with in-degree $1$ or out-degree $1$. Then $d(v_i)\leq k$.
Consider $D_i$ the digraph obtained from $D-v_i$ by adding the arc $v_{i-1}v_{i+1}$.
Clearly, $D_i$ has a Hamiltonian directed cycle. Moreover is has no subdivision of $C(k,1)$ for otherwise, replacing the arc $v_{i-1}v_{i+1}$ by $(v_{i-1}, v_i, v_{i+1})$ if necessary, yields a subdivision of $C(k,1)$ in $D$.
By the induction hypothesis, $D_i$ a $\max\{k+1, \frac{3k-3}{2}\}$-colouring which can be extended to $v_i$ because $d(v_i)\leq k$.

Henceforth, we may assume that $\delta^-(D), \delta^+(D)\geq 2$.

\begin{claim}\label{claim:visions}
 $d^+(v_i)+d^-(v_{i+1})\leq 3k -n -3$ for all $i$.
\end{claim}
\begin{subproof}
Let $v_{i^+}$ be the first out-neighbour of $v_{i}$ along
$C[v_{i+2}, v_{i-1}]$ and let $v_{i^-}$ be the last in-neighbour of $v_{i+1}$ along $C[v_{i+3}, v_{i}]$. 
There are $d^+(v_i)-1$ out-neigbours of $v_i$ in $C[v_{i^+}, v_{i-1}]$ which all must be in $C[v_{i^+}, v_{i+k-1}]$ by the above observation. Therefore $i^+ \leq i+k-d^+(v_i)$. Similarly, $i^-\geq i-k+d^-(v_{i+1})$.
\begin{itemize}
\item if $v_i \in C[v_{i^-},v_{i^+}]$, $C[v_{i^-},v_{i^+}]$ has length $i^+-i^-\leq 2k - d^+(v_i) - d^-(v_{i+1})$.
Hence $C[v_{i^+}, v_{i^-}]$ has length at least $n-2k + d^+(v_i) + d^-(v_{i+1})$.
But the union of $(v_i, v_{i^+})\odot C[v_{i^+}, v_{i^-}]\odot (v_{i^-}, v_{i+1})$ and $(v_i, v_{i+1})$ is not a subdivision of $C(k,1)$, so
$C[v_{i^+}, v_{i^-}]$ has length at most $k-3$.
Hence, $k-3 \geq n -2k + d^+(v_i) + d^-(v_{i+1})$, so $d^+(v_i) + d^-(v_{i+1}) \leq 3k-n -3$.
\item otherwise, $v_{i^+}\in C[v_{i^-}, v_{i+1}]$ and $v_{i^-}\in C[v_{i}, v_{i^+}]$. 
Both $C[v_{i^-}, v_{i+1}]$ and $C[v_{i}, v_{i^+}]$ have length less than $k$ as $v_{i^-}v_{i+1}$ and $v_{i^-}v_{i+1}$ are arcs.
Moreover, the union of these two dipaths is $C$ and their intersection  contains the three distinct vertices $v_i$, $v_{i+1}$, $v_{i^-}$.
Consequently, $n=|C|\leq |C[v_{i^-}, v_{i+1}]| + |C[v_{i}, v_{i^+}]|-3\leq 2k-3$.
Let $v_{i_0}$ be the last out-neighbour of $v_i$ along $C[v_{i+2}, v_{i-1}]$.
All the out-neighbours of $v_i$ and all the in-neighbours of $v_{i+1}$ are in $C[v_i,v_{i_0}]$ which has length less than $k$ because
$v_iv_{i_0}$ is an arc. Hence $d^+(v_i)+d^-(v_{i+1})\leq k$, so $d^+(v_i)+d^-(v_{i+1})\leq 3k -n -3$ because $n\geq 2k-3$.
\end{itemize}
\vspace*{-24pt}
\end{subproof}

But $n\geq \frac{3k-1}{2}$, so by the above claim, $d^+(v_i)+d^-(v_{i+1})\leq \frac{3k-5}{2}$ for all $i$.

Summing these inequalities over all $i$, we get $\sum_{i=1}^n (d^+(v_i) + d^-(v_{i+1}) \leq \frac{3k-5}{2} \cdot n$.
Thus $\sum_{i=1}^n d(v_i)= \sum_{i=1}^n (d^+(v_i) + d^-(v_i)) \leq \frac{3k-5}{2} \cdot n$.
Therefore there exists an index $i$ such that $v_i$ has degree at most $\frac{3k-5}{2}$.
Consider the digraph $D_i$ defined above.
It is Hamiltonian and contains no subdivision of $C(k,1)$. 
By the induction hypothesis, $D_i$ has a $\max\{k+1, \frac{3k-3}{2}\}$-colouring which can be extended to $v$ because $d(v_i)\leq \frac{3k-5}{2}$.
\end{proof}

\begin{corollary}\label{cor:k1}
Let $k$ be an integer greater than $1$. Then $\chi(\SForb(C(k,1))\cap {\cal S})\leq \max\{k+1,2k-4\}$.
\end{corollary}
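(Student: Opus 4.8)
The plan is simply to combine the two immediately preceding results and then verify that the resulting bound simplifies to the claimed one. First I would invoke Theorem~\ref{thm:k1} to obtain
$$\chi(\SForb(C(k,1))\cap {\cal S})\leq \max\{2k-4,\, \phi(k,1)\}.$$
Then Theorem~\ref{thm:phik1} gives $\phi(k,1)\leq \max\{k+1,\, \tfrac{3k-3}{2}\}$, so substituting yields
$$\chi(\SForb(C(k,1))\cap {\cal S})\leq \max\left\{2k-4,\; k+1,\; \tfrac{3k-3}{2}\right\}.$$
Thus everything reduces to checking that the middle term $\tfrac{3k-3}{2}$ never dictates the maximum, i.e.\ that $\tfrac{3k-3}{2}\leq \max\{k+1,\,2k-4\}$ for every integer $k>1$.

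I would dispatch this with a single case split at $k=5$. For $k\leq 5$ the inequality $\tfrac{3k-3}{2}\leq k+1$ holds, since it is equivalent to $3k-3\leq 2k+2$, i.e.\ $k\leq 5$; for $k\geq 5$ the inequality $\tfrac{3k-3}{2}\leq 2k-4$ holds, since it is equivalent to $3k-3\leq 4k-8$, i.e.\ $k\geq 5$. (The two regimes agree at $k=5$, where all three quantities equal $6$.) In either case $\tfrac{3k-3}{2}$ is dominated by $\max\{k+1,\,2k-4\}$, so the three-way maximum collapses to $\max\{k+1,\,2k-4\}$, which is precisely the asserted bound.

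There is essentially no obstacle: the corollary is a direct consequence of Theorems~\ref{thm:k1} and~\ref{thm:phik1}, and the only genuine work is the elementary arithmetic verification that the half-integer term $\tfrac{3k-3}{2}$ is superfluous. The one point worth a moment's care is the integrality of $\tfrac{3k-3}{2}$, but since the inequality $\tfrac{3k-3}{2}\leq \max\{k+1,\,2k-4\}$ holds already as a real-number inequality for all $k>1$, no rounding subtlety ever arises and the bound follows at once.
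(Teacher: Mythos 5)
Your proposal is correct and is essentially identical to the paper's own proof: both simply combine Theorems~\ref{thm:k1} and~\ref{thm:phik1} and observe that $\max\{2k-4,\,k+1,\,\tfrac{3k-3}{2}\}=\max\{k+1,\,2k-4\}$. Your explicit case split at $k=5$ just spells out the arithmetic the paper leaves implicit.
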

\begin{proof}
By Theorems~\ref{thm:k1} and \ref{thm:phik1},  $\chi(\SForb(C(k,1))\cap {\cal S})\leq \max\{2k-4, k+1, \frac{3k-3}{2}\} = \max\{k+1,2k-4\}$.
\end{proof}

\section{Small cycles with two blocks in strong digraphs}\label{sec:2blocks-small}

\subsection{Handle decomposition}

Let $D$ be a strongly connected digraph. A {\it handle} $h$ of $D$ is a directed path
$(s,v_1,\ldots,v_\ell,t)$ from $s$ to $t$ (where $s$ and $t$ may be identical)
such that:
\begin{itemize}
\item $d^-(v_i)=d^+(v_i)=1$, for every $i$, and
\item removing the internal vertices and arcs of $h$ leaves $D$ strongly connected.
\end{itemize}

The vertices $s$ and $t$ are the {\it endvertices} of $h$ while
the vertices $v_i$ are its {\it internal vertices}. The vertex $s$ is the {\it initial vertex} of $h$ and $t$ its {\it terminal vertex}.
The {\it length} of a handle is the number of its arcs, here $\ell+1$.
A handle of length $1$ is said to be {\it trivial}.

Given a strongly connected digraph $D$, a {\it handle decomposition} of $D$
starting at $v\in V(D)$ is a triple $(v,(h_i)_{1\le i \le
  p},(D_i)_{0\le i \le p})$, where $(D_i)_{0\le i \le p}$ is a
sequence of strongly connected digraphs and $(h_i)_{1\le i \le p}$ is a sequence
of handles such that:
\begin{itemize}
\item $V(D_0)=\{v\}$,
\item for $1\le i\le p$, $h_i$ is a handle of $D_i$ and $D_i$ is the
  (arc-disjoint) union of $D_{i-1}$ and $h_i$, and
\item $D=D_p$.
\end{itemize}

A handle decomposition is uniquely determined by $v$ and either
$(h_i)_{1\le i \le p}$, or $(D_i)_{0\le i \le p}$. The number of
handles $p$ in any handle decomposition of $D$
is exactly $|A(D)|-|V(D)|+1$. The value $p$ is also called
the \emph{cyclomatic number} of $D$. Observe that $p=0$
when $D$ is a singleton and $p=1$ when $D$ is a directed cycle.

A handle decomposition $(v,(h_i)_{1\le i \le
  p},(D_i)_{0\le i \le p})$ is {\it nice} if all handles except the first one $h_1$ have distinct endvertices (i.e., for any $1<i\leq p$, the initial and terminal vertices of $h_i$ are distinct).

A digraph is {\it robust} if it is $2$-connected and strongly connected.
  The following proposition is well-known (see \cite{BoMu08} Theorem~5.13).

 \begin{proposition}\label{prop:nice-decomp}
 Every robust digraph admits a nice handle decomposition.
 \end{proposition}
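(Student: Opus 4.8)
The plan is to construct the decomposition greedily from the inside out, maintaining the invariant that the current subdigraph is robust. Starting from $D_0=\{v\}$, take $h_1$ to be a directed cycle of $D$ through $v$ (one exists since $D$ is strong), so that $D_1$ is a directed cycle and hence robust. As long as $D_i\ne D$, I attach to it a handle $h_{i+1}$ \emph{with distinct endvertices} and set $D_{i+1}=D_i\cup h_{i+1}$. Attaching a dipath between two \emph{distinct} vertices of a $2$-connected digraph, with new internal vertices, is an open-ear operation, so it preserves $2$-connectivity (Whitney), and it visibly preserves strong connectivity; hence each $D_{i+1}$ is again robust. Since every handle added after $h_1$ has distinct endvertices, the resulting decomposition is automatically nice, and the process terminates because each step adds at least one arc. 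Everything therefore reduces to showing that such an attachment is always available while $D_i\ne D$.

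The key step is the following extension lemma: if $D'$ is a robust proper subdigraph of a robust digraph $D$, then $D$ contains an \emph{open} handle relative to $D'$, that is, a dipath with two distinct endvertices in $V(D')$ and all internal vertices (if any) outside $V(D')$. Two easy cases dispose of most situations. First, if some arc of $D$ joins two vertices of $V(D')$ but does not belong to $A(D')$, this arc is itself an open handle, since it is not a loop and so its ends are distinct. Otherwise set $U=V(D)\setminus V(D')\ne\emptyset$; as $D$ is strong there is an arc $s\to x$ with $s\in V(D')$ and $x\in U$, and a shortest dipath from $x$ back to $V(D')$ produces a handle with internal vertices in $U$ and endvertices $s,t$. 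If $s\ne t$ we are done.

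The crux, and the step I expect to be the main obstacle, is the remaining case in which this first-return handle is forced to close up, $s=t$; here the hypothesis that $D$ is $2$-connected (rather than merely strong) must be used to turn directed reachability into an undirected separation. The idea is to analyse the set $B$ of vertices reachable from $x$ by dipaths of $D-s$ that avoid $s$. If $B$ meets $V(D')\setminus\{s\}$, then the first time such a dipath re-enters $V(D')$ gives an open handle $s\to x\to\cdots\to t'$ with $t'\ne s$. If $B$ misses $V(D')\setminus\{s\}$, then $B\subseteq U$ and every out-arc of $B$ lands in $B\cup\{s\}$; in particular $B$ sends no arc to $V(D')\setminus\{s\}$. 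One then argues that either some vertex of $V(D')\setminus\{s\}$ sends an arc into $B$ — in which case following it by a dipath inside $B$ back to $s$ again yields an open handle — or else $B$ attaches to the rest of $D$ only through $s$, forcing $s$ to be a cut vertex and contradicting $2$-connectivity. The delicate point is to control the arcs entering $B$ from $U\setminus B$, which a priori need not pass through $s$; closing this gap, for instance by choosing $B$ maximal or by running the symmetric argument on the reverse digraph, is exactly where $2$-connectivity is used in full, and it is the content delegated to the cited theorem of Bondy and Murty.
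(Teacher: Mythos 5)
Your strategy is the standard open-ear induction, and it is worth noting that the paper offers no proof of this proposition at all (it only cites \cite{BoMu08}, Theorem~5.13), so there is no in-paper argument to compare against. The reduction is sound: $h_1$ can be taken to be any directed cycle through $v$, and any dipath with distinct endvertices in $V(D_i)$, internal vertices outside $V(D_i)$ and arcs outside $A(D_i)$ is automatically a handle of $D_{i+1}$ (its internal vertices have in- and out-degree $1$ in $D_{i+1}$, and deleting them and its arcs leaves the strong digraph $D_i$), so everything indeed rests on your extension lemma. Your first two cases of that lemma are correct. But the last case contains a genuine gap, exactly where you flag it, and you cannot delegate it to the citation: that separation argument \emph{is} the content of the proposition. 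With $B$ the set of vertices reachable from $x$ in $D-s$, arcs from $U\setminus B$ into $B$ are not controlled, so you cannot conclude that $s$ separates $B$ from $V(D')\setminus\{s\}$, and neither ``taking $B$ maximal'' nor reversing arcs obviously repairs this as stated.

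The gap is closable by replacing reachability from a single vertex with a weak-component argument. Suppose no open handle exists and $U=V(D)\setminus V(D')\neq\emptyset$. For $u\in U$ let $L(u)$ (resp.\ $R(u)$) be the set of vertices of $V(D')$ from which $u$ is reachable (resp.\ which are reachable from $u$) by a dipath with all internal vertices in $U$; both are nonempty since $D$ is strong. Given $s\in L(u)$ and $t\in R(u)$ with witnessing dipaths $Q_1,Q_2$, let $w$ be the first vertex of $Q_1$ lying on $Q_2$; then $Q_1[s,w]\odot Q_2[w,t]$ is a dipath from $s$ to $t$ with internal vertices in $U$, hence an open handle unless $s=t$. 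Therefore $L(u)=R(u)=\{s_u\}$ for a single vertex $s_u$, and if $uu'$ or $u'u$ is an arc of $D[U]$ then $s_u=s_{u'}$; so $s_u$ is constant, equal to some $s$, on each weakly connected component $H$ of $D-V(D')$. Every arc between $H$ and $V(D)\setminus H$ then has its endvertex in $V(D')$ equal to $s$, so $s$ is a cutvertex of the underlying graph of $D$ (note $V(D')\setminus\{s\}\neq\emptyset$), contradicting $2$-connectivity. With this in place your induction goes through; as a minor point, the invariant you actually need is only that each $D_i$ is strong with at least two vertices together with the robustness of $D$ itself, which also spares you from deciding whether a directed $2$-cycle counts as $2$-connected at the base case.
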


\begin{lemma}\label{lem:reduc}
Every strong digraph $D$ with $\chi(D)\geq 3$ has a robust subdigraph $D'$ with  $\chi(D')=\chi(D)$ and which is an oriented graph.
\end{lemma}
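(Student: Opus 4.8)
The plan is to proceed in two stages: first extract a robust subdigraph carrying the full chromatic number while ignoring digons, then delete one arc from each digon while keeping the digraph robust. Since the chromatic number of a digraph depends only on its underlying graph, and deleting one arc of a digon leaves the underlying graph unchanged, the second stage automatically preserves $\chi$; so the whole difficulty lies in connectivity.

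For the first stage I would work with the underlying graph $G$ of $D$. Recall that $\chi(G)$ equals the maximum of $\chi(B)$ over the blocks $B$ of $G$, and that a block is either a bridge (with $\chi=2$) or a maximal $2$-connected subgraph. Since $\chi(D)=\chi(G)\ge 3$, I may pick a block $B$ with $\chi(B)=\chi(G)$; it is $2$-connected and has at least three vertices. Let $D_B=D[V(B)]$ be the subdigraph of $D$ induced on $V(B)$. By maximality of $B$, every edge of $G$ with both endpoints in $V(B)$ lies in $B$, so the underlying graph of $D_B$ is exactly $B$ and $\chi(D_B)=\chi(B)=\chi(D)$. The point I would then check is that $D_B$ is strong: given $x,y\in V(B)$, take an $(x,y)$-dipath $P$ in $D$; using the block-cut tree structure, once $P$ leaves $V(B)$ through a cut vertex $c$ it could only re-enter $V(B)$ through $c$ again, which is impossible on a dipath, so $P$ stays inside $V(B)$ and hence inside $D_B$. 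Thus $D_B$ is strong and $2$-connected, i.e. robust, with $\chi(D_B)=\chi(D)$.

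The heart of the argument is the second stage, which I would isolate as the following claim: \emph{if $H$ is robust and $\{uv,vu\}$ is a digon of $H$, then $H-uv$ or $H-vu$ is robust.} Granting it, I iterate: each application kills one digon, creates none, and preserves robustness, so after finitely many steps I reach a robust oriented subdigraph $D'$ with $\chi(D')=\chi(D)$. To prove the claim, note that $2$-connectivity is automatic (the edge $uv$ survives), so only strong connectivity is at stake. Suppose both $H-uv$ and $H-vu$ fail to be strong. Then there is a nonempty proper set $S$ whose unique out-arc in $H$ is $uv$ (so $u\in S$, $v\notin S$) and a set $T$ whose unique out-arc is $vu$ (so $v\in T$, $u\notin T$). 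I would then show, by checking which arcs can leave $S\cap T$ and which arcs can enter $\overline{S\cup T}$, that both of these sets are empty; hence $V(H)=S\sqcup T$ with $uv$ the only edge joining them. That edge is then a bridge, forcing a cut vertex (if $|S|\ge 2$ or $|T|\ge 2$) or $|V(H)|=2$, each contradicting robustness.

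The main obstacle is precisely this claim --- guaranteeing that some arc of every digon can be deleted without destroying strong connectivity; the set-crossing bookkeeping ($S\cap T$ closed under out-arcs, $\overline{S\cup T}$ unreachable) is exactly where the $2$-connectivity hypothesis is genuinely needed. The first stage is comparatively routine, the only subtle point there being the verification that a dipath between two vertices of a block cannot wander outside it.
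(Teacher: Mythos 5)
Your proof is correct. The first stage coincides with the paper's: both pass to a $2$-connected block of maximum chromatic number (the paper calls it a $2$-connected component) and observe that it is strong, hence robust, with the same chromatic number. Where you genuinely diverge is in eliminating the digons. The paper invokes Bondy's theorem to get a directed cycle of length at least $\chi(D')\geq 3$ inside the robust piece, extends it to a nice handle decomposition, and deletes every arc that occurs as a trivial handle whose reverse is already present in the previously built subdigraph; since deleting a trivial handle never destroys strong connectivity and the underlying graph is unchanged, the result is a robust oriented graph of the same chromatic number. You instead prove a local deletion lemma --- in a robust digraph one arc of any digon can always be removed while preserving robustness --- via the cut-set argument with the two ``escape'' sets $S$ and $T$, and iterate. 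Your argument is more self-contained (it needs neither Bondy's theorem nor the handle-decomposition machinery of Proposition~\ref{prop:nice-decomp}), at the cost of the set-crossing bookkeeping; the paper's route is shorter given that handle decompositions are already set up and reused heavily in Section~\ref{sec:2blocks-small}. Both the verification that a dipath between two vertices of a block cannot leave the block and the deduction that $S\cap T$ and $\overline{S\cup T}$ must be empty are carried out correctly in your write-up, and the final bridge contradiction does use $2$-connectivity exactly as you indicate, so the claim would indeed fail for digraphs that are merely strong.
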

\begin{proof}
Let $D$ be a strong digraph $D$ with $\chi(D)\geq 3$.
Let $D'$ be a  $2$-connected components of $D$ with the largest chromatic number.
Each $2$-connected component of a strong digraph is strong, so $D'$ is strong.
Moreover, $\chi(D')=\chi(D)$ because the chromatic number of a graph is the maximum of the chromatic numbers of its $2$-connected components.
Now by Bondy's Theorem (Theorem~\ref{thm:bondy}), $D'$ contains a cycle $C$ of length at least $\chi(D')\geq 3$.
This can be extended into a handle decomposition $(v,(h_i)_{1\le i \le
  p},(D_i)_{0\le i \le p})$ of $D$ such that $D_1=C$.
  Let $D''$ be the digraph obtained from $D'$ by removing the arcs $(u,v)$ which are trivial handles  $h_i$ and such that $(v,u)$ is in $A(D_{i-1})$, we obtain an oriented graph $D''$ which is robust and with $\chi(D'')=\chi(D')=\chi(D)$. 
\end{proof}

\subsection{$C(1,2)$}

\begin{proposition}\label{prop:C(1,2)}
  A robust digraph containing no subdivision of $C(1,2)$ is a directed cycle.
\end{proposition}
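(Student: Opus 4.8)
The plan is to lean on the handle-decomposition machinery of Proposition~\ref{prop:nice-decomp}. First I would record what a subdivision of $C(1,2)$ really is. The cycle $C(1,2)$ consists of a vertex $S$ of out-degree $2$ and a vertex $T$ of in-degree $2$, joined by the arc $S\to T$ (the block of length $1$) and by a dipath of length $2$ through one interior vertex. Hence a subdivision of $C(1,2)$ is exactly a pair of internally vertex-disjoint directed $(S,T)$-paths whose lengths are at least $1$ and at least $2$ in some order. In particular, $D$ contains a subdivision of $C(1,2)$ as soon as some ordered pair $(S,T)$ of distinct vertices is joined by two internally disjoint $(S,T)$-dipaths that are not both a single arc; and the latter degenerate case is automatically excluded in a simple digraph, since two arcs $S\to T$ would be parallel.

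Next, since $D$ is robust, Proposition~\ref{prop:nice-decomp} furnishes a nice handle decomposition $(v,(h_i)_{1\le i\le p},(D_i)_{0\le i\le p})$. Because $D_0=\{v\}$ is a single vertex, both endvertices of the first handle $h_1$ must lie in $D_0$ and hence equal $v$, so $h_1$ is a directed path from $v$ to $v$, that is, a directed cycle; thus $D_1$ is a directed cycle through $v$. If $p=1$, then $D=D_1$ is a directed cycle and the statement holds.

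The heart of the argument is to rule out $p\ge 2$. Suppose $p\ge 2$ and consider the second handle $h_2$, a dipath from $s$ to $t$ whose internal vertices are new (they do not belong to $D_1$). Since the decomposition is nice, $s\neq t$. As $D_1$ is a directed cycle, it contains a directed $(s,t)$-path $P$. Then $P$ and $h_2$ are two internally vertex-disjoint $(s,t)$-dipaths inside $D=D_p$: their internal vertices are disjoint because those of $P$ lie on $D_1$ while those of $h_2$ are new. Because $D$ is a simple digraph they cannot both be the arc $s\to t$, so at least one of them has length at least $2$, and together they form a subdivision of $C(1,2)$, contradicting the hypothesis. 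Therefore $p=1$ and $D$ is a directed cycle.

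The only delicate point, and the step I would treat most carefully, is the degenerate length analysis: I must ensure that $P$ and $h_2$ genuinely yield a $C(1,2)$-subdivision rather than merely a digon, which is precisely where simplicity of $D$ and the niceness condition $s\neq t$ enter. The two supporting structural facts — that $D_1$ is forced to be a directed cycle by $D_0$ being a singleton, and that the internal vertices of $h_2$ avoid $D_1$ so that $P$ and $h_2$ are internally disjoint — are routine consequences of the handle-decomposition definitions.
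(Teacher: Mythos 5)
Your proof is correct and takes essentially the same route as the paper's: a nice handle decomposition gives a directed cycle $D_1$ plus, if $D$ is not itself a cycle, a second handle $h_2$ from $s$ to $t$ with $s\neq t$, and the union of $h_2$ with the $(s,t)$-dipath along the cycle is the desired $C(1,2)$-subdivision. You are in fact slightly more careful than the paper, which does not explicitly rule out the degenerate case where both paths are single arcs; your observation that simplicity excludes parallel arcs closes that (minor) gap.
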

\begin{proof}
Let $D$ be a robust digraph containing no subdivision of $C(1,2)$. 
Assume for a contradiction that a robust digraph  of $D$ is not a directed cycle.
By Proposition~\ref{prop:nice-decomp}, it contains a directed cycle $C$ and a nice handle $h_2$ from $u$ to $v$.
Now the union of $h_2$ and $C[u,v]$ is a subdivision of $C(1,2)$.
\end{proof}

\begin{corollary}\label{cor:C(1,2)}
$\chi(\SForb(C(1,2))\cap {\cal S}) =3$.
\end{corollary}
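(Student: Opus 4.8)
The plan is to establish the two inequalities $\chi(\SForb(C(1,2))\cap {\cal S})\geq 3$ and $\chi(\SForb(C(1,2))\cap {\cal S})\leq 3$ separately, drawing almost entirely on Proposition~\ref{prop:C(1,2)} and Lemma~\ref{lem:reduc}.

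For the lower bound I would exhibit a single strong digraph in $\SForb(C(1,2))$ whose chromatic number is exactly $3$. The natural witness is the directed triangle $\vec{C}_3$: it is strongly connected, its underlying graph is a triangle so $\chi(\vec{C}_3)=3$, and its only cycle is $\vec{C}_3$ itself, which has a single block. Since any subdivision of $C(1,2)$ is a cycle with two blocks, $\vec{C}_3$ contains no such subdivision, and therefore $\chi(\SForb(C(1,2))\cap {\cal S})\geq 3$.

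For the upper bound I would argue by contradiction. Suppose some $D\in\SForb(C(1,2))\cap {\cal S}$ has $\chi(D)\geq 4$. Then in particular $\chi(D)\geq 3$, so Lemma~\ref{lem:reduc} provides a robust subdigraph $D'$ of $D$ that is an oriented graph with $\chi(D')=\chi(D)\geq 4$. Being a subdigraph of $D$, $D'$ also contains no subdivision of $C(1,2)$, so Proposition~\ref{prop:C(1,2)} forces $D'$ to be a directed cycle. But the underlying graph of a directed cycle is an (undirected) cycle, hence $\chi(D')\leq 3$, contradicting $\chi(D')\geq 4$. Thus every member of $\SForb(C(1,2))\cap {\cal S}$ has chromatic number at most $3$, and combining the two bounds gives the claimed equality.

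I do not expect a genuine obstacle: all the structural work is already done in the two cited results, and the only points requiring a moment of care are verifying that $\vec{C}_3$ really avoids subdivisions of $C(1,2)$ (because its unique cycle has a single block) and that a directed cycle, regardless of parity, is $3$-colourable. The slightly delicate step is simply making sure Lemma~\ref{lem:reduc} is applicable, which it is as soon as we assume $\chi(D)\geq 4>2$.
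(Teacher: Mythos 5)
Your proposal is correct and follows essentially the same route as the paper: reduce via Lemma~\ref{lem:reduc} to a robust oriented subdigraph, apply Proposition~\ref{prop:C(1,2)} to conclude it is a directed cycle (hence $3$-colourable), and use an odd directed cycle for the lower bound. The only cosmetic difference is that you phrase the upper bound as a proof by contradiction while the paper states it directly.
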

\begin{proof}
Lemma~\ref{lem:reduc}, Proposition~\ref{prop:C(1,2)}, and the fact that every directed cycles is $3$-colourable imply
$\chi(\SForb(C(1,2))\cap {\cal S}) \leq 3$.

The directed cycles of odd length have chromatic number $3$ and contain no subdivision of $C(1,2)$.
Therefore, $\chi(\SForb(C(1,2))\cap {\cal S}) =3$.
\end{proof}

\subsection{$C(2,2)$}

\begin{theorem}\label{thm:C(2,2)}
Let $D$ be a strong digraph. If $\chi(D) \geq 4$, then $D$ contains a subdivision of $C(2,2)$.
\end{theorem}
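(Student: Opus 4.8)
The plan is to prove that every strong digraph $D$ with $\chi(D)\geq 4$ contains a subdivision of $C(2,2)$. By Lemma~\ref{lem:reduc}, it suffices to treat the case where $D$ is robust and an oriented graph: indeed, any strong digraph with $\chi(D)\geq 4\geq 3$ has a robust oriented subdigraph $D'$ with $\chi(D')=\chi(D)\geq 4$, and a subdivision of $C(2,2)$ found in $D'$ lives in $D$ as well. So I would begin: \emph{Let $D'$ be a robust oriented subgraph of $D$ with $\chi(D')=\chi(D)\geq 4$, given by Lemma~\ref{lem:reduc}.} From now on I work entirely inside $D'$, which by Proposition~\ref{prop:nice-decomp} admits a nice handle decomposition $(v,(h_i)_{1\le i\le p},(D_i)_{0\le i\le p})$.

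The first observation is that $C(2,2)$ is the oriented ``theta-like'' cycle with two blocks of length $2$, i.e. it is the union of two internally disjoint dipaths of length $2$ sharing the same initial vertex and the same terminal vertex (a digon-free pattern $x\to a\to y$, $x\to b\to y$). A subdivision of $C(2,2)$ is therefore nothing but two internally disjoint dipaths of length at least $2$ from a common source $x$ to a common sink $y$. Thus I would reformulate the goal as: find a vertex $x$ and a vertex $y$ joined by two internally disjoint $(x,y)$-dipaths, each of length at least $2$. The handle decomposition is the natural device: each handle $h_i$ is an $(s,t)$-dipath whose internal vertices have in- and out-degree $1$ in $D_i$, and ``niceness'' forces $s\neq t$ for $i\geq 2$. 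The plan is to analyze how a newly added handle $h_i$ (for the smallest $i$ where trouble arises) together with the already-built strong digraph $D_{i-1}$ produces the two required dipaths.

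The key step is a careful look at the \emph{second} nontrivial handle, exactly as in the $C(1,2)$ case (Proposition~\ref{prop:C(1,2)}), but now with length bookkeeping. Since $D'$ is an oriented graph that is not merely a directed cycle (it cannot be, as directed cycles are $3$-colourable and $\chi(D')\geq 4$), the decomposition has $p\geq 2$. Consider $D_2=C\cup h_2$ where $C=D_1$ is a directed cycle and $h_2$ is a nice handle from $u$ to $v$ with $u\neq v$. Inside the directed cycle $C$ there are the two arcs/dipaths $C[u,v]$ and $C[v,u]$. If both $h_2$ and $C[u,v]$ have length at least $2$, their union is immediately a subdivision of $C(2,2)$ (two internally disjoint $(u,v)$-dipaths of length $\geq 2$), and we are done. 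The remaining danger is that $h_2$ is trivial (length $1$) or that $C[u,v]$ has length $1$. Because $D'$ is a \emph{robust oriented graph}, a trivial handle $u\to v$ cannot coexist with the arc $v\to u$, and more importantly $\chi(D')\geq 4$ forces enough structure that some handle must create a long enough second dipath. The hard part will be controlling these short-handle degenerate configurations: I expect the real work is to show that if every handle giving a repeated endpoint is ``too short'' to build $C(2,2)$ directly, then $D'$ has a restricted structure (essentially a directed cycle with only short chords) whose chromatic number can be bounded by $3$ via an explicit cyclic colouring, contradicting $\chi(D')\geq 4$. This is the analogue of Lemma~\ref{lem:longChord}'s colouring argument, and I anticipate it is where the bound $4$ (rather than $3$) genuinely enters.

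Concretely, I would argue as follows. Since $\chi(D')\geq 4$, by Bondy's Theorem (Theorem~\ref{thm:bondy}) $D'$ contains a directed cycle of length at least $4$; take $C$ to be a longest directed cycle and use it as $D_1$. Every chord (arc of $D'$ not on $C$) or handle must, to avoid an immediate $C(2,2)$, have both endpoints close together on $C$: if a chord or handle from $x$ to $y$ has length $\ell'\geq 2$, then the arc of $C$ from $x$ to $y$ must have length exactly $1$, and symmetrically a trivial handle $x\to y$ forces the $C$-dipath $C[x,y]$ to have length $1$ (else $C[x,y]$ together with the handle gives two internally disjoint dipaths both of length $\geq 2$). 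Pushing this, every handle connects two consecutive or near-consecutive vertices of $C$, so $D'$ looks like a directed cycle equipped only with ``span-$1$'' shortcuts; such a digraph is $3$-colourable by colouring $C$ cyclically with colours $0,1,2$ (adapting the remainder as in the proof of Lemma~\ref{lem:longChord}), contradicting $\chi(D')\geq 4$. I would present the colouring contradiction as the final blow. The main obstacle, which I would isolate as a claim, is establishing that the absence of a $C(2,2)$-subdivision really does force \emph{all} handles/chords to have span $1$ on a longest cycle; the niceness of the handle decomposition and the fact that $D'$ is an oriented graph (no digons) are exactly the hypotheses needed to rule out the borderline length-$1$ cases.
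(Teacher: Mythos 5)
Your setup matches the paper's: reduce to a robust oriented graph via Lemma~\ref{lem:reduc}, take a nice handle decomposition, and observe that a nontrivial handle $h$ from $s$ to $t$ together with an internally disjoint $(s,t)$-dipath of length at least $2$ in the previously built strong digraph is already a subdivision of $C(2,2)$. That part is correct. The gap is in your treatment of the degenerate cases, and it is twofold. First, your claimed structural consequence --- that avoiding $C(2,2)$-subdivisions forces every chord/handle to join (near-)consecutive vertices of a longest directed cycle, so that a cyclic $3$-colouring finishes the proof --- is false. A trivial handle (a chord) $v_i\to v_j$ of large span together with $C[v_i,v_j]$ only yields a subdivision of $C(1,k)$, never of $C(2,2)$; your ``symmetrically'' step does not apply to length-$1$ handles. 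Concretely, a long directed Hamiltonian cycle with two \emph{nested} long chords, say $v_1v_{10}$ and $v_2v_9$, contains no subdivision of $C(2,2)$, so no span bound can be extracted. The correct dichotomy in the Hamiltonian case is whether two chords \emph{cross}: two crossing chords $(u_1,v_1)$, $(u_2,v_2)$ give the two internally disjoint dipaths $C[u_1,u_2]\odot(u_2,v_2)$ and $(u_1,v_1)\odot C[v_1,v_2]$, each of length at least $2$; and if no two chords cross the underlying graph is outerplanar, hence $3$-colourable. Your cyclic colouring in the style of Lemma~\ref{lem:longChord} rests on a premise that simply does not follow from excluding $C(2,2)$.

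Second, you analyze only $h_2$ against the initial cycle $D_1$, but later handles attach to internal vertices of earlier handles, so ``position on $C$'' does not control them. The paper instead chooses the handle decomposition maximizing the length sequence lexicographically and looks at the \emph{last} nontrivial handle $h_q$ (with endpoints $s\neq t$): if the arc $(s,t)$ were present in $D_{q-1}$, one could splice $h_q$ into the handle containing that arc and demote $h_q$ to a trivial handle, contradicting lexicographic maximality; hence every $(s,t)$-dipath in $D_{q-1}$ has length at least $2$ and the easy case applies. This swap argument is exactly the device you are missing for the case you call ``the remaining danger'' ($C[u,v]$ of length $1$), and without it (or a substitute) your proof does not close.
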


\begin{proof}
By Lemma~\ref{lem:reduc}, we may assume that $D$ is robust. 

By Proposition~\ref{prop:nice-decomp}, $D$ has a nice handle decomposition.
Consider a nice decomposition $(v,(h_i)_{1\le i \le
  p},(D_i)_{0\le i \le p})$ that maximizes the sequence $(\ell_1, \dots , \ell_p)$ of the length of the handles with respect to the lexicographic order.

Let $q$ be the largest index such that $h_q$ is not trivial.

\medskip

Assume first that $q\neq 1$. Let $s$ and $t$ be the initial and terminal vertex of $h_q$ respectively.
There is an $(s,t)$-path $P$ in $D_{q-1}$.
If $P=(s,t)$, let $r$ be the index of the handle containing the arc $(s,t)$.
Obviously, $r < q$. Now replacing $h_r$ by the handle $h'_r$ obtained from it by replacing the arc $(s,t)$ by $h_q$ and replacing $h_q$ by $(s,t)$, we obtain a nice handle decomposition contradicting the minimality of $(v,(h_i)_{1\le i \le
  p},(D_i)_{0\le i \le p})$.
Therefore $P$ has length at least $2$. So $P\cup h_q$ is a subdivision of $C(2,2)$.

\medskip

Assume that $q=1$, that is $D$ has a hamiltonian directed cycle $C$.
Let us call {\it chords} the arcs of $A(D)\setminus A(C)$.
Suppose that two chords $(u_1,v_1)$ and $(u_2,v_2)$ {\it cross}, that is $u_2\in C]u_1,v_1[$ and $v_2\in C]v_1,u_1[$.
Then the union of $C[u_1,u_2]\odot (u_2,v_2)$ and $(u_1,v_1)\odot C[v_1,v_2]$ forms a subdivision of $C(2,2)$.

If no two chords cross, then one can draw $C$ in the plane and all chords inside it without any crossing.
Therefore the graph underlying $D$ is outerplanar and has chromatic number at most~$3$.
\end{proof}

Since the directed odd cycles are in $\SForb(C(2,2))$ and have chromatic number $3$, Theorem~\ref{thm:C(2,2)} directly implies the following.
\begin{corollary}\label{cor:C(2,2)}
$\chi(\SForb(C(2,2))\cap {\cal S}) =3$.
\end{corollary}

\subsection{$C(1,3)$}

\begin{theorem}\label{thm:C(1,3)}
Let $D$ be a strong digraph. If $\chi(D) \geq 4$, then $D$ contains a subdivision of $C(1,3)$.
\end{theorem}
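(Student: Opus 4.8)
The plan is to follow the scheme used for $C(1,2)$ and $C(2,2)$: by Lemma~\ref{lem:reduc} it suffices to treat a \emph{robust oriented graph}, so I would assume $D$ is robust with $\chi(D)\ge 4$ and aim for a contradiction. It is convenient to first reformulate the target. A subdivision of $C(1,3)$ is exactly a pair of internally disjoint $(s,t)$-dipaths whose lengths are at least $1$ and at least $3$. Hence \emph{$D$ contains no subdivision of $C(1,3)$ if and only if, for every ordered pair $(s,t)$, any two internally disjoint $(s,t)$-dipaths both have length at most $2$.} The goal then becomes: a robust oriented graph satisfying this condition has $\chi(D)\le 3$, which is best possible by the odd directed cycles.

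Next I would extract the structural core through a handle decomposition. By Proposition~\ref{prop:nice-decomp}, $D$ has a nice handle decomposition $(v,(h_i)_{1\le i\le p},(D_i)_{0\le i\le p})$, chosen so as to maximise the sequence of handle lengths lexicographically, as in the proof of Theorem~\ref{thm:C(2,2)}. For every nontrivial handle $h_i$ with $i\ge 2$, its endvertices $s_i\ne t_i$ lie in the strong digraph $D_{i-1}$, which therefore contains an $(s_i,t_i)$-dipath $P$; since the internal vertices of $h_i$ are new, $P$ and $h_i$ are internally disjoint. If either $h_i$ or $P$ had length at least $3$, then $h_i\cup P$ would already be a subdivision of $C(1,3)$. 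Thus in a counterexample every handle $h_i$ ($i\ge 2$) has length at most $2$, and moreover \emph{every} $(s_i,t_i)$-dipath in $D_{i-1}$ has length at most $2$. Equivalently, fixing a longest directed cycle $C$ (of length at least $\chi(D)\ge 4$ by Bondy's Theorem~\ref{thm:bondy}), every dipath internally disjoint from $C$ joining two vertices of $C$ spans at most two positions along $C$ and has length at most $2$: all the extra structure is \emph{local}.

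With this in hand I would split on the index $q$ of the last nontrivial handle, mirroring the $C(2,2)$ argument. If $q=1$ then $D$ has a Hamiltonian directed cycle and every chord spans at most $2$ positions, so $\spa_C(D)\le 2$ and Lemma~\ref{lem:longChord} gives $\chi(D)<4$, i.e.\ $\chi(D)\le 3$ (the chordless and tiny cases being immediate). If $q\ge 2$, the digraph is a directed cycle together with ears of length at most $2$ whose attachments span at most two positions; here I would produce the $3$-colouring either by a position-based colouring of the cyclic ``bounded-span'' skeleton in the spirit of Lemma~\ref{lem:longChord}, after greedily colouring the degree-$2$ internal vertices of the length-$2$ ears, or by a clique-cutset reduction using Lemma~\ref{lem:combine} on a minimum counterexample, exactly as in the proof of Theorem~\ref{thm:k1}. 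The main obstacle I anticipate is precisely this last $3$-colouring: ears may attach to the internal vertices of earlier ears, so the bounded-span picture is not literally a Hamiltonian graph, and one cannot naively suppress a degree-$2$ ear vertex, since turning a length-$2$ block into a single arc can create a spurious subdivision of $C(1,3)$ that is absent from $D$. I would therefore run the induction by \emph{deleting} ears rather than contracting them, or by cutting along the clique $\{s,t\}$ produced when an ear is short-circuited, checking at each step that both robustness and the absence of a $C(1,3)$ are preserved, and only then invoking Lemma~\ref{lem:longChord} or Lemma~\ref{lem:combine} to conclude.
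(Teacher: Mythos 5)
Your reduction to a robust oriented graph, the reformulation of ``no subdivision of $C(1,3)$'' as ``any two internally disjoint $(s,t)$-dipaths both have length at most $2$'', and the lexicographically maximal nice handle decomposition all match the paper's setup, and the deduction that every non-trivial handle $h_i$ ($i\geq 2$) and every dipath of $D_{i-1}$ joining its endvertices has length at most $2$ is sound. The problems start where you try to conclude. In the Hamiltonian case ($q=1$) you invoke Lemma~\ref{lem:longChord} with $\spa_C(D)\leq 2$ to get $\chi(D)\leq 3$, but that lemma does not deliver this: an orientation of $K_4$ (or of $C_5^2=K_5$, or of $C_7^2$) along a Hamiltonian directed cycle has all chords of span $2$ and chromatic number at least $4$, so the step ``$\spa_C(D)\leq 2$ implies $\chi(D)<4$'' fails precisely on digraphs you must still handle. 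The paper's argument at this point is the one you need: two \emph{crossing} chords $u_1v_1$, $u_2v_2$ yield the subdivision $C[u_2,v_1]\cup\bigl((u_2,v_2)\odot C[v_2,u_1]\odot(u_1,v_1)\bigr)$ of $C(1,3)$ (this is also derivable from your own reformulation), and if no two chords cross the underlying graph is outerplanar, hence $3$-colourable. This is a local repair, but as written your $q=1$ case does not close.

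The case $q\geq 2$ is a genuine gap: you correctly identify the obstacle (ears attached to internal vertices of earlier ears; suppressing a degree-$2$ vertex may create spurious subdivisions) but you only list candidate strategies without carrying any of them out, so the heart of the proof is missing here. The paper avoids the induction entirely by exploiting that $q$ is the \emph{last} non-trivial handle: since $h_q=(s,x,t)$ has length $2$ and all later handles are trivial, $x$ is the unique vertex of $D$ outside $D_{q-1}$. One then takes the length-$2$ dipath $P=(s,u,t)$ in $D_{q-1}$, a third neighbour $t'$ of $x$ (if $x$ has only the neighbours $s$ and $t$ it can be deleted without changing the chromatic number), a second length-$2$ dipath $(s,u',t')$ obtained by re-optimising the handle decomposition, and a dipath $Q$ from $t$ to $\{s,u,u',t'\}$ inside the strong digraph $D_{q-1}$; in each of the four cases for the endpoint of $Q$ one exhibits a subdivision of $C(1,3)$ directly. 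Nothing in your sketch plays the role of this argument, and without it (or a fully worked-out ear-deletion induction) the proof is incomplete.
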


\begin{proof}
By Lemma~\ref{lem:reduc}, we may assume that $D$ is robust. Thus, by Proposition~\ref{prop:nice-decomp}, $D$ has a nice handle decomposition.
Consider a nice decomposition $(v,(h_i)_{1\le i \le
  p},(D_i)_{0\le i \le p})$ that maximizes the sequence $(\ell_1, \dots , \ell_p)$ of the length of the handles with respect to the lexicographic order.

Let $q$ be the largest index such that $h_q$ is not trivial.

\medskip

\noindent \underline{Case 1}: Assume first that $q\neq 1$. Let $s$ and $t$ be the initial and terminal vertex of $h_q$ respectively.
Since $D_{q-1}$ is strong, there is an $(s,t)$-dipath $P$ in $D_{q-1}$.
If $P=(s,t)$, let $r$ be the index of the handle containing the arc $(s,t)$.
Obviously, $r < q$. Now replacing $h_r$ by the handle $h'_r$ obtained from it by replacing the arc $(s,t)$ by $h_q$ and replacing $h_q$ by $(s,t)$, we obtain a nice handle decomposition contradicting the minimality of $(v,(h_i)_{1\le i \le
  p},(D_i)_{0\le i \le p})$.
Therefore $P$ has length at least $2$.
If either $P$ or $h_q$ has length at least $3$, then $P\cup h$ is a subdivision of $C(1,3)$.
Henceforth, we may assume that both $P$ and $h_q$ have length $2$. Set $P=(s,u,t)$ and $h=(s,x,t)$. Observe that $V(D)=V(D_{q-1})\cup \{x\}$.

Assume that $x$ has a neighbour $t'$ distinct from $s$ and $t$. By directional duality (i.e., up to reversing all arcs), we may assume that $x\ra t'$.
Considering the handle decomposition in which $h_q$ is replaced by $(s,x,t')$ and $(x,t')$ by $(x,t)$, we obtain that there is a dipath $(s,u',t')$ in $D_{q-1}$.
Now, if $u'=t$, then the union of $(s,x,t')$ and $(s,u,t,t')$ is a subdivision of $C(1,3)$.
Henceforth, we may assume that $t\notin \{s,u,u', t'\}$.
Since $D_{q-1}$ is strong, there is a dipath $Q$ from $t$ to $\{s,u,u', t'\}$, which has length at least one by the preceding assumption. Note that $x \notin Q$ since $Q$ is a dipath in $D_{q-1}$.
Whatever vertex of $\{s,u,u', t'\}$ is the terminal vertex $z$ of $Q$, we find a subdivision of $C(1,3)$:
\begin{itemize}
\item If $z=s$, then the union of $(x,t')$ and $(x,t)\odot Q \odot (s,u',t')$ is  a subdivision of $C(1,3)$;
\item If $z=u$, then the union of $(s,u)$ and $h_q\odot Q$ is  a subdivision of $C(1,3)$;
\item If $z=u'$, then the union of $(s,u')$ and $h_q\odot Q$ is  a subdivision of $C(1,3)$;
\item If $z=t'$, then the union of $(s,x,t')$ and $(s,u,t)\odot Q$ is  a subdivision of $C(1,3)$.
\end{itemize}
\medskip

\noindent \underline{Case 2}: Assume that $q=1$, that is $D$ has a hamiltonian directed cycle $C$.
Assume that two chords $(u_1,v_1)$ and $(u_2,v_2)$ cross. Without loss of generality, we may assume that the vertices $u_1$, $u_2$, $v_1$ and $v_2$ appear in this order along $C$.
Then the union of $C[u_2,v_1]$ and $(u_2,v_2)\odot C[v_2,u_1]\odot(u_1,v_1)$ forms a subdivision of $C(1,3)$.

If no two chords cross, then one can draw $C$ in the plane and all chords inside it without any crossing.
Therefore the graph underlying $D$ is outerplanar and has chromatic number at most $3$.
\end{proof}

Since the directed odd cycles are in $\SForb(C(1,3))$ and have chromatic number $3$, Theorem~\ref{thm:C(1,3)} directly implies the following.

\begin{corollary}\label{cor:C(1,3)}
$\chi(\SForb(C(1,3))\cap {\cal S}) =3$.
\end{corollary}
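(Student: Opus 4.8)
The plan is to read off both inequalities, the upper bound coming essentially for free from Theorem~\ref{thm:C(1,3)} and the lower bound from an explicit family. For the upper bound I would simply take the contrapositive of Theorem~\ref{thm:C(1,3)}: that theorem asserts that every strong digraph with chromatic number at least $4$ contains a subdivision of $C(1,3)$, so any $D \in \SForb(C(1,3)) \cap {\cal S}$ must satisfy $\chi(D) \leq 3$. This yields $\chi(\SForb(C(1,3)) \cap {\cal S}) \leq 3$ at once, so no further work is needed for this direction.

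For the matching lower bound I would exhibit strong digraphs in $\SForb(C(1,3))$ of chromatic number exactly $3$, namely the directed cycles $\vec{C}_{2m+1}$ of odd length. Each such digraph is strongly connected, and its underlying graph is an odd cycle, hence has chromatic number $3$. The only point to verify is that $\vec{C}_{2m+1}$ contains no subdivision of $C(1,3)$. Here the key observation is that a subdivision of an oriented cycle has the same number of blocks as the cycle itself, so any subdivision of $C(1,3)$ is an oriented cycle with exactly two blocks. However, the only cycle subdigraph of a directed cycle is the whole directed cycle, which is a single directed block; thus no two-block oriented cycle — in particular no subdivision of $C(1,3)$ — occurs as a subdigraph. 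This gives $\chi(\SForb(C(1,3)) \cap {\cal S}) \geq 3$, and combining the two bounds yields the claimed equality.

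There is no real obstacle to overcome here: all the substance lives in Theorem~\ref{thm:C(1,3)}, and the corollary is routine. The only small check is the block-count argument showing that directed cycles avoid two-block oriented cycles, which is immediate and is precisely the same device used in the parallel statements for $C(1,2)$ and $C(2,2)$ (Corollaries~\ref{cor:C(1,2)} and~\ref{cor:C(2,2)}). I would therefore phrase the proof as a one-line deduction for the upper bound followed by the odd-directed-cycle witness for tightness.
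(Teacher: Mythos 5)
Your proposal is correct and is essentially identical to the paper's own one-line argument: the upper bound is the contrapositive of Theorem~\ref{thm:C(1,3)}, and the lower bound comes from directed odd cycles, which are strong, $3$-chromatic, and contain no subdivision of $C(1,3)$ since their only cycle subdigraph is the directed cycle itself rather than a two-block oriented cycle. Nothing further is needed.
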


\subsection{$C(2,3)$}

\begin{theorem}
Let $D$ be a strong directed graph. If $\chi(D)\geq 5$, then $D$ contains a subdivision of $C(2,3)$.
\end{theorem}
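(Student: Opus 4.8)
The plan is to follow the same strategy used for $C(1,3)$ and $C(2,2)$, reducing to robust oriented graphs via Lemma~\ref{lem:reduc} and Proposition~\ref{prop:nice-decomp}, then analysing a handle decomposition that is lexicographically maximal in its sequence of handle lengths. Let $q$ be the largest index for which $h_q$ is nontrivial. I would split into the case $q\neq 1$ (there is a nontrivial non-Hamiltonian handle) and the case $q=1$ (a Hamiltonian directed cycle with chords). The threshold $\chi(D)\geq 5$ is chosen because the Hamiltonian/chord case will force chromatic number at most $4$ when no subdivision of $C(2,3)$ occurs.

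In the case $q\neq 1$, let $s,t$ be the endvertices of $h_q$, and take an $(s,t)$-dipath $P$ in the strong digraph $D_{q-1}$. By the usual swapping argument (exchanging $h_q$ with a trivial handle realising the arc $(s,t)$ and invoking lexicographic maximality), $P$ cannot have length $1$, so $\abs{P}\geq 2$; also $\ell_q\geq 2$. If $P$ has length $\geq 3$ and $h_q$ length $\geq 2$ (or vice versa), then $P\cup h_q$ is immediately a subdivision of $C(2,3)$. The delicate subcase is when both $P$ and $h_q$ have length exactly $2$; then $V(D)=V(D_{q-1})\cup\{x\}$ where $x$ is the single internal vertex of $h_q$, and I would exploit strong connectivity together with extra neighbours of the low-degree vertices $x,s,t,u$ to route a short dipath realising the second block of length $3$, very much as in the $C(1,3)$ proof but now needing one longer block. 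This enumeration of terminal vertices of an auxiliary dipath $Q$ from $t$ (or from $x$) back into the small vertex set is where most of the casework lives.

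In the case $q=1$, $D$ has a Hamiltonian directed cycle $C=(v_1,\ldots,v_n,v_1)$ and every non-arc-of-$C$ is a chord. Two crossing chords already yield a subdivision of $C(2,3)$ in most configurations (splitting $C$ into arcs and combining with the two chords to build blocks of lengths $2$ and $3$); the remaining danger is crossing chords whose cross point leaves one of the two $C$-arcs too short, so I would argue that whenever two chords cross one can always select the orientation giving one block of length $\geq 2$ and one of length $\geq 3$, unless $n$ is small, in which case $\chi(D)\leq 4$ holds directly. If no two chords cross, the underlying graph is outerplanar, hence $3$-colourable, contradicting $\chi(D)\geq 5$.

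The main obstacle I expect is the boundary subcase of the $q\neq 1$ branch where both $P$ and $h_q$ have length exactly $2$: unlike $C(1,3)$, here one block must have length $3$, so short dipaths are not enough and I will have to use strong connectivity to grow a block while carefully avoiding collisions with $x$ and reusing vertices of $P$ and $h_q$. A secondary subtlety is verifying, in the Hamiltonian case, that crossing chords can genuinely be arranged to produce blocks of lengths $2$ and $3$ rather than $1$ and something; this is a length-bookkeeping argument along the arcs of $C$ that must handle small values of $n$ separately to reach the bound $\chi(D)\leq 4$.
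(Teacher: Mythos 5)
Your overall architecture matches the paper's (reduce to a robust oriented graph, take a lexicographically maximal nice handle decomposition, split on whether the last nontrivial handle is $h_1$, and use outerplanarity when no two chords cross), but both branches of your plan have a genuine gap at exactly the points you flag as ``delicate''. In the $q\neq 1$ branch, your intention to ``exploit extra neighbours of the low-degree vertices $x,s,t,u$'' as in the $C(1,3)$ proof fails when there are no extra neighbours: the internal vertex $x$ of a length-$2$ handle may have exactly two neighbours, and then no local routing argument can manufacture a block of length $3$. The paper resolves this by working with a \emph{minimal counterexample} and proving a global structural statement: every internal vertex of every non-initial handle is a ``clone'' of a vertex $u_j$ of the initial cycle $h_1$ (its only out-neighbour is $u_{j+1}$ and its only in-neighbour is $u_{j-1}$), where Bondy's theorem guarantees $|h_1|\geq 5$ so that the clone positions can be controlled. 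A clone has degree $2$, so deleting it preserves strong connectivity and the chromatic number, and minimality kills the counterexample. This forces $q=1$; nothing in your proposal plays the role of this step, and without a minimality hypothesis you cannot even delete the low-degree vertex.

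In the $q=1$ branch, your claim that two crossing chords yield a subdivision of $C(2,3)$ ``unless $n$ is small'' is false: the configuration of crossing chords $(u_i,u_{i+2})$ and $(u_{i+1},u_{i+3})$ with consecutive endpoints produces no subdivision of $C(2,3)$ no matter how long the Hamiltonian cycle is, and one certainly cannot conclude $\chi(D)\leq 4$ directly from its presence. The paper's proof shows that any two crossing chords must have their four endpoints consecutive on $C$ (otherwise a $C(2,3)$-subdivision appears), deduces from this that $u_{i+1}$ and $u_{i+2}$ have at most three neighbours each, then deletes these two vertices, adds the arc $(u_i,u_{i+3})$, and applies minimality again, lifting the resulting subdivision back to $D$ by re-expanding the arc into the dipath $(u_i,u_{i+1},u_{i+2},u_{i+3})$. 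You would need to add both the minimal-counterexample framework and these two reduction steps to turn your outline into a proof.
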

\begin{proof}
By Lemma~\ref{lem:reduc}, we may assume that $D$ is a robust oriented graph.
Thus, by Proposition~\ref{prop:nice-decomp}, $D$ has a nice handle decomposition.
Let  ${\rm HD}=((h_i)_{1\le i \le
  p},(D_i)_{1\le i \le p})$ be a nice decomposition that maximizes the sequence $(\ell_1, \dots , \ell_p)$ of the length of the handles with respect to the lexicographic order. Recall that $D_i$ is strongly connected for any $1\leq i \leq p$. In particular, $h_1$ is a longest directed cycle in $D$.
  Let $q$ be the largest index such that $h_q$ is not trivial.
Observe that for all $i>q$, $h_i$ is a trivial handle by definition of $q$ and, for $i\leq q$, all handles $h_i$ have length at least $2$.

  \begin{claim}
 For any $1<i\leq q$, $h_i$ has length exactly $2$.
 \end{claim}
 \begin{subproof}
 For sake of contradiction, let us assume that there exists $2\leq r \leq q$ such that  $h_r=(x_1,\ldots,x_t)$ with $t\geq 4$. Since $D_{r-1}$ is strong, there is a $(x_1,x_t)$-dipath $P$ in $D_{r-1}$. Note that $P$ does not meet $\{x_2,\dots,x_{t-1}\}$. If $P$ has length at least $2$, then $P\cup h_r$ is a subdivision of $C(2,3)$. If $P=(x_1,x_t)$, let $r'$ be the handle containing the arc $h_{r'}$. Now the handle decomposition obtained from ${\rm HD}$ by replacing $h_{r'}$by the handle derived from it  by replacing the arc $(x_1,x_t)$ by $h_r$, and replacing $h_r$ by $(x_1,x_t)$, contradicts the maximality of ${\rm HD}$.   \end{subproof}

 For $1<i\leq q$, set $h_i=(a_i,b_i,c_i)$.
Since $h_1$ is a longest directed cycle in $D$ and $\chi(D) \geq 5$,  by Bondy's Theorem, $h_1$ has length at least $5$.
Set $h_1=(u_1, \dots , u_m, u_1)$.

A {\it clone} of $u_i$ is a vertex whose unique out-neighbour in $D_q$ is $u_{i+1}$ and whose unique in-neighbour in $D_q$ is $u_{i-1}$ (indices are taken modulo $m$).

\begin{claim}
Let $v \in V(D)\setminus V(D_1)$. Let $1<i \leq q$ such that $v=b_i$, the internal vertex of $h_i$.
There is an index $j$ such that $b_i$ is a clone of $u_j$, that is $a_i=u_{j-1}$ and $c_i=u_{j+1}$.
\end{claim}
\begin{subproof}
We prove the result by induction on $i$.

By the induction hypothesis (or trivially if $i=2$), there exists $i^-$ and $i^+$ such that
$a_i$ is $u_{i^-}$ or a clone of $u_{i^-}$ and $c_i$ is $u_{i^+}$ or a clone of $u_{i^+}$.
If $i^+\notin\{i^-+1, i^-+2\}$, then the union of $h_i$ and $(a_i, u_{i^-+1}, \dots , u_{i^+-1}, c_i)$ is a subdivision of $C(2,3)$, a contradiction
If $i^+=i^--1$, then $(a_i, b_i, c_i, h_1[u_{i^++1} , \dots , u_{i^--1}], a_i)$ is a cycle longer than $h_1$, a contradiction.
Henceforth $i^+=i^-+2$.
If $c_i$ is not $u_{i^+}$, then it is a clone of $u_{i^+}$. Thus the union of $(a_i,b_i, c_i, u_{i^++1})$ and $(a_i, u_{i^-+1}, u_{i^+}, u_{i^++1})$ is a subdivision of $C(2,3)$, a contradiction
Similarly, we obtain a contradiction if $a_i \neq u_{i^-}$.
Therefore, $a_i=u_{i^--1}$ and $c_i=u_{i^-+1}$, that is  $b_i$ is a clone of $u_{i^-+1}$.
Moreover all $b_{i'}$ for $i'<i$ are not adjacent to $b_i$ and thus are still clones of some $u_j$.
\end{subproof}

For $1\leq i\leq m$, let $S_i$ be the set of clones of $u_i$.
\begin{claim}\label{claim:clone2}
\item[(i)] If $S_i\neq \emptyset$, then $S_{i-1}=S_{i+1}=\emptyset$.
\item[(ii)] If $x\in S_i$, then $N^+_D(x)=\{u_{i+1}\}$ and $N^-_D(x)=\{u_{i-1}\}$.
\end{claim}
\begin{subproof}
(i) Assume for a contradiction, that both $S_{i}$ and $S_{i+1}$ are non-empty, say $x_i\in S_{i}$ and $x_{i+1}\in S_{i+1}$.
Then the union of $(u_{i-1}, u_i, x_{i+1}, u_{i+2})$ and $(u_{i-1}, x_i, u_{i+1}, u_{i+2})$ is a subdivision of $C(2,3)$, a contradiction.

\medskip

(ii) Let $x\in S_i$.
Assume for a contradiction that $x$ has an out-neighbour $y$ distinct from $u_{i+1}$. By (i), $y\notin S_{i-1}$, and $y\neq u_{i-1}$ because $D$ is an oriented graph.
If $y\in S_i\cup\{u_i\}$, then $(x, y, h_1[u_{i+1}, u_{i-1}], x)$ is a directed cycle longer than $h$.
If $y\in S_j\cup \{u_j\}$ for $j\notin \{i-2\}$, then the union of  $(u_{i-1}, x, y, u_{j+1})$ and $h_1[u_{i-1}, u_{j+1}]$ is a subdivision of $C(2,3)$, a contradiction.
If $y\in S_{i-2}$, then the union of  $(x, y, u_{i-1})$ and $(x, h_1[u_{i+1}, u_{i-1}]$ is a subdivision of $C(2,3)$, a contradiction.
If $y=u_j$ for $j\notin \{i-1, i, i+1\}$, then the union of  $(u_{i-1}, x, y)$ and $h_1[u_{i-1}, y]$ is a subdivision of $C(2,3)$, a contradiction.
\end{subproof}

This implies that $q=1$.
Indeed, if $q\geq 2$, then there is $i\leq m$ such that $b_2 \in S_i$.
But $D-b_q=D_{q-1}$ is strong, and $\chi(D-b_q) \geq 5$, because $\chi(D)\geq 5$ and $b_q$ has only two neighbours in $D$ by
Claim~\ref{claim:clone2}-(ii). But then by minimality of $D$, $D-b_q$ contains a subdivision of $C(2,3)$, which is also in $D$, a contradiction.

Hence $m=|V(D)|$.
Because $\chi(D)\geq 5$, $D$ is not outerplanar, so there must be $i<j<k<\ell <i +m$ such that $(u_i,u_k) \in A(D)$ and $(u_j,u_{\ell}) \in A(D)$. We must have $j=i+1$ and $\ell=k+1$ since otherwise $(u_i,\dots,u_j, u_{\ell})$ and $(u_i,u_k,\dots,u_{\ell})$ form a subdivision of $C(2,3)$. In addition, $k=j+1$ since otherwise, $(u_j,u_{\ell},\dots,u_i,u_k)$ and $(u_j,\dots,u_k)$ form a subdivision of $C(2,3)$. Therefore, any two ``crossing" arcs must have their ends being consecutive  in $D_1$. This implies that $N^+(u_j)=\{u_{j+1},u_{j+2}\}$, $N^-(u_j)=\{u_{j-1}\}$, $N^+(u_k)=\{u_{k+1}\}$ and $N^-(u_k)=\{u_{k-1},u_{k-2}\}$.

 Now let $D'$ be the digraph obtained from $D-\{u_j,u_k\}$ by adding the arc $(u_i,u_{\ell})$. Because $u_j$ and $u_k$ have only three neighbours in $D$, $\chi(D')\geq 5$. By minimality of $D$, $D'$ contains a subdivision of $C(2,3)$, which can be transformed into a subdivision of $C(2,3)$ in $D$ by replacing the arc $(u_i,u_{\ell})$ by the directed path $(u_i,u_j, u_k, _l)$.
\end{proof}

Since every semi-complete digraph of order $4$ does not contain $C(2,3)$ (which has order 5), we have the following.
\begin{corollary}\label{cor:C(2,3)}
$\chi(\SForb(C(2,3))\cap {\cal S}) =4$.
\end{corollary}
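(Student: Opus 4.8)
The plan is to combine the theorem just proved with a single small extremal example, yielding matching upper and lower bounds on $\chi(\SForb(C(2,3))\cap {\cal S})$.

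For the upper bound I would simply invoke the contrapositive of the preceding theorem: every strong digraph $D$ with $\chi(D)\geq 5$ contains a subdivision of $C(2,3)$, so every $D\in\SForb(C(2,3))\cap{\cal S}$ satisfies $\chi(D)\leq 4$. By the definition of the chromatic number of a class of digraphs, this already gives $\chi(\SForb(C(2,3))\cap{\cal S})\leq 4$.

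For the lower bound I would exhibit one strong digraph of chromatic number exactly $4$ that contains no subdivision of $C(2,3)$. The natural candidate is a strong semi-complete digraph $T$ on four vertices, for instance a strongly connected tournament on four vertices (take the directed cycle $v_1\to v_2\to v_3\to v_4\to v_1$, which is already strong, and orient the two diagonals arbitrarily to turn it into a tournament). Its underlying graph is $K_4$, so $\chi(T)=4$, and $T$ is strong by construction. Since $C(2,3)$ has order $5$, any subdivision of it uses at least $5$ vertices, whereas $T$ has only $4$; hence $T\in\SForb(C(2,3))$. Therefore $\chi(\SForb(C(2,3))\cap{\cal S})\geq 4$, and together with the upper bound this forces equality.

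There is no real obstacle here: all the difficulty is carried by the theorem, and the corollary only requires checking that the chosen four-vertex digraph is strong and has chromatic number $4$, together with the elementary remark that a subdivision of a five-vertex cycle cannot fit on fewer than five vertices.
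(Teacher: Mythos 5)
Your proposal is correct and matches the paper's own argument: the upper bound is the contrapositive of the preceding theorem, and the lower bound comes from a strong semi-complete digraph on four vertices (the paper's phrasing), which cannot contain a subdivision of the five-vertex cycle $C(2,3)$. Nothing further is needed.
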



\section{Cycles with four blocks  in strong digraphs}\label{sec:4block}

\begin{theorem}\label{thm:hatC}
Let $D$ be a digraph in $\SForb(\hat{C}_4)$.
If $D$ admits an out-generator, then $\chi(D) \leq 24$.
\end{theorem}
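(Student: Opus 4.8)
The plan is to mimic the level-based decomposition used for Theorem~\ref{thm:Ckk}. Fix an out-generator $u$, a BFS-tree $T$ rooted at $u$, and the levels $L_i=\{v\mid \lvl(v)=i\}$. Since no arc can raise the level by more than $1$, the arc set splits into three parts: $A^=$ (arcs $xy$ with $\lvl(x)=\lvl(y)$), $A^\uparrow$ (arcs with $\lvl(y)=\lvl(x)+1$), and $A^\downarrow$ (arcs with $\lvl(x)>\lvl(y)$). Writing $D^=,D^\uparrow,D^\downarrow$ for the corresponding spanning subdigraphs, Lemma~\ref{lem:decomp} gives $\chi(D)\le \chi(D^=)\cdot\chi(D^\uparrow)\cdot\chi(D^\downarrow)$, so it suffices to bound each factor, aiming at $3\cdot 2\cdot 4=24$.

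The arcs of $A^\uparrow$ join consecutive levels, so colouring $v$ by $\lvl(v)\bmod 2$ is proper for $D^\uparrow$ and $\chi(D^\uparrow)\le 2$. For $D^=$ the key observation is that no vertex may have two out-neighbours inside its own level. Indeed, if $c\in L_i$ had out-neighbours $b,d\in L_i$ with $b\ne d$, let $a$ be the least common ancestor of $b$ and $d$ in $T$; since $b,d$ lie on the same level, $a$ is a strict ancestor of both and the tree-dipaths $T[a,b]$ and $T[a,d]$ have length at least $1$ and meet only in $a$. Together with the arcs $c\to b$ and $c\to d$ they form four internally disjoint dipaths realising a subdivision of $\hat C_4$ (sources $a,c$, sinks $b,d$), a contradiction. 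Hence every $D[L_i]$ has maximum out-degree at most $1$, so its underlying graph is a pseudoforest and $\chi(D[L_i])\le 3$; as $D^=$ is the disjoint union of the $D[L_i]$, we get $\chi(D^=)\le 3$.

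The crux is $D^\downarrow$. The same least-common-ancestor trick already yields two structural restrictions forced by $\hat C_4$-freeness. First, a vertex $x$ cannot have two backward out-neighbours $y_1,y_2$ that are incomparable in $T$: their least common ancestor $a$ is a strict ancestor of both and, since $\lvl(x)>\lvl(y_1),\lvl(y_2)$, the vertex $x$ lies on neither $T[a,y_1]$ nor $T[a,y_2]$, so $T[a,y_1],T[a,y_2]$ together with the arcs $x\to y_1,x\to y_2$ give a subdivision of $\hat C_4$. Thus the backward out-neighbourhood of every vertex is a chain of $T$. Second, two distinct vertices cannot share two common backward out-neighbours, for the four arcs would themselves be a copy of $\hat C_4=\vec K_{2,2}$. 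Building on these (and on analogous forbidden configurations obtained by routing one of the four dipaths through $T$, which rule out certain ``crossing'' pairs of backward arcs), the plan is to split $A^\downarrow$ according to whether the head is a tree-ancestor of the tail and to show that the resulting digraphs admit proper colourings with few colours, so that $\chi(D^\downarrow)\le 4$.

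I expect this last step to be the main obstacle. The degenerate case of the ancestor trick---when a vertex's two backward out-neighbours are comparable in $T$---does not by itself produce $\hat C_4$ (it only yields a subdivision of $C(1,1)$), so the chain structure above can a priori be long and the forbidden-$\vec K_{2,2}$ condition alone does not bound the chromatic number. Overcoming this will require a careful case analysis, using the out-generator to supply a second source and Proposition~\ref{prop:longueur} to control path lengths, in order to convert the local restrictions into a genuine bound on $\chi(D^\downarrow)$; once that bound is in hand, Lemma~\ref{lem:decomp} finishes the proof with $\chi(D)\le 24$.
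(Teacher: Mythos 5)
Your decomposition and your treatment of $D^=$ and $D^\uparrow$ match the paper's proof (the paper derives the out-degree restriction inside each level from Theorem~\ref{thm:2blocks} rather than directly, but the least-common-ancestor argument is the same and your pseudoforest conclusion is sound). The problem is that you never actually bound $\chi(D^\downarrow)$: you record the two structural restrictions (the backward out-neighbourhood of each vertex is a chain of $T$; no $\vec{K}_{2,2}$), correctly observe that these alone do not bound the chromatic number, and then only announce a plan. That is the heart of the proof, and it is missing.

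The idea you need is one more application of the same trick, applied to two \emph{comparable} out-neighbours. Let $y_1,\dots,y_p$ be the backward out-neighbours of a vertex $x$, listed along the chain with $y_1$ closest to the root. If some middle $y_i$ (with $2\le i\le p-1$) had a backward out-neighbour $z$, then a case analysis on the least common ancestor $t$ of $y_1$ and $z$ produces a subdivision of $\hat{C}_4$; for instance, if $z$ is an ancestor of $y_1$, the four dipaths $(y_i,z)\odot T[z,y_1]$, $(x,y_1)$, $(x,y_p)$ and $T[y_i,y_p]$ have sources $x,y_i$ and sinks $y_1,y_p$ and are internally disjoint by level considerations (the other two cases are similar, routing through $T[t,y_1]$ and $T[t,z]$). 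Hence every out-neighbour of $x$ other than the two extremes of the chain is a sink of $D^\downarrow$. The set $V_0$ of sinks is stable (one colour), and $D^\downarrow-V_0$ has maximum out-degree at most $2$ and is acyclic since every arc strictly decreases the level, so a greedy colouring in reverse topological order uses $3$ colours; this gives $\chi(D^\downarrow)\le 4$ and closes the gap. (Your variant, which puts the arcs dropping the level by exactly $1$ into $D^\downarrow$ instead of the parity-coloured class, is harmless here: none of these arguments uses a drop of at least $2$.)
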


\begin{proof}
The general idea is the same as in the proof of Theorem~\ref{thm:Ckk}.

Suppose that $D$ admits an out-generator $u$ and let $T$ be an BFS-tree with root $u$ (See Subsubsection~\ref{subsubsec:level}.).
We partition $A(D)$ into three sets according to the levels of $u$.
\begin{eqnarray*}
A_0 & = & \{(x,y) \in A(D) \mid \lvl(x)=\lvl(y)\} ;\\
A_1 & =  & \{(x,y) \in A(D)\mid  | \lvl(x)-\lvl(y) | =1\} ;\\
A_2 & = & \{(x,y) \in A(D) \mid \lvl(y) \leq \lvl(x)-2\}.
\end{eqnarray*}

For $i=0,1,2$, let $D_i=(V(D), A_i)$.

\begin{claim}\label{claim:A_0}
$\chi(D_0)\leq 3$.
\end{claim}
\begin{subproof}
Suppose for a contradiction that $\chi(D)\geq 4$. By Theorem~\ref{thm:2blocks}, it contains a $P^-(1,1)$ $(y_1,y,y_2)$, that is $y,y_1$ and $y,y_2$ are in $A(D_0)$. Let $x$ be the least common ancestor of $y_1$ and $y_2$ in $T$. The union of $T[x,y_1]$, $(y,y_1)$,
$(y,y_2)$, and $T[x,y_2]$ is a subdivision of $\hat{C}_4$, a contradiction.
\end{subproof}

\begin{claim}\label{claim:A_1}
$\chi(D_1)\leq 2$.
\end{claim}
\begin{subproof}
Since the arc are between consecutive levels, then the colouring $\phi_1$ defined by  $\phi_1(x) = lvl(x) \mod 2$ is a proper $2$-colouring of $D_1$.
\end{subproof}

Let $y\in V_i$ we denote by $N'(y)$ the out-degree of $y$ in $\bigcup_{0\leq j\leq i-1} V_j$.
Let $D' = (V,A')$ with $A' = \cup_{x\in V}\{(x,y), y\in N'(x)\}$ and $D_x = (V, A_x)$ where $A_x$ is the set of arc inside the level and from $V_i$ to $V_{i+1}$ for all $i$. Note that $A = A' \cup A_x$ and

\begin{claim}\label{claim:A_2}
$\chi(D_2)\leq 4$.
\end{claim}
\begin{subproof}
Let $x$ be a vertex of $V(D)$.
If $y$ and $z$ are distinct out-neighbours of $x$ in $D_2$, then
their least common ancestor $w$ is either $y$ or $z$, for otherwise the union of $T[w,y]$, $(x,y)$, $(x,z)$, and $T[w,z]$ is a subdivision of $\hat{C}_4$.
Consequently, there is an ordering $y_1,\dots, y_p$ of $N^+_{D_2}(x)$ such that the $y_i$ appear in this order on $T[u,x]$.

Let us prove that $N^+(y_i)=\emptyset$ for $2\leq i\leq p-1$.
Suppose for a contradiction that $y_i$ has an out-neighbour $z$ in $D_2$.
Let $t$ be the least common ancestor of $y_1$ and $z$.
If $t=z$, then the union of $(y_i,z)\odot T[z, y_1]$, $(x,y_1)$, $(x,y_p)$, and $T[y_i,y_p]$ is a subdivision of $\hat{C}_4$;
if $t=y\neq z$, then the union of $(y_i,z)$, $(x,y_1)\odot T[y_1,z]$, $(x,y_p)$, and $T[y_i,y_p]$ is a subdivision of $\hat{C}_4$. Otherwise, if $t \notin \{y,z\}$, $T[t,y_1]$, $T[t,z]$, $(x,y_i) \odot (y_i,z)$ and $(x,y_1)$ is a subdivision of $\hat{C}_4$.

Henceforth, in $D_2$, every vertex has at most two out-neighbours that are not sinks.
Let $V_0$ be the set of sinks in $D_2$. It is a stable set in $D_2$.
Furthermore $\Delta^+(D_2-V_0)\leq 2$, so $D_2-V_0$ is $3$-colourable, because $D_2$ (and so $D_2-V_0$) is acyclic.
Therefore $\chi(D_2)\leq 4$.
\end{subproof}

Claims~\ref{claim:A_0},~\ref{claim:A_1},~\ref{claim:A_2}, and Lemma \ref{lem:decomp} implies $\chi(D) \leq 24$.
\end{proof}

\section{Further research}

\medskip

The upper bound of Theorem~\ref{thm:Ckk} can be lowered when considering $2$-strong digraphs.

\begin{theorem}\label{thm:2strong}
Let $k$ and $\ell$ be two integers such that, $k \geq \ell$,  $k+\ell\geq 4$ and $(k,\ell)\neq (2,2)$.
Let $D$ be a $2$-strong digraph. If $\chi(D) \geq (k+\ell-2)(k-1)+2$, then $D$ contains a subdivision of $C(k,\ell)$.
\end{theorem}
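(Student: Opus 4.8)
The plan is to follow the levelling strategy of Theorem~\ref{thm:Ckk}, but to exploit $2$-strong connectivity in order to replace the crude product of four chromatic numbers by a product of only two, thereby reaching the announced bound. A $2$-strong digraph is strong, hence has an out-generator $u$; I would fix one and a BFS-tree $T$ rooted at $u$, with levels $L_i$ and level function $\lvl$, and split $A(D)$ according to $\lvl(x)-\lvl(y)$ exactly as in the proof of Theorem~\ref{thm:Ckk}.

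First I would handle the level-equal arcs $A_0=\{xy\mid \lvl(x)=\lvl(y)\}$ with no appeal to $2$-strongness. As in Claim~\ref{claim:D0}, if some $D[L_i]$ had chromatic number at least $k+\ell-1$, then by Theorem~\ref{thm:2blocks} it would contain a copy of $P^+(k-1,\ell-1)$, and attaching the two tree-dipaths running from the least common ancestor of its endvertices would produce a subdivision of $C(k,\ell)$. Since $D_0$ is the disjoint union of the $D[L_i]$, this gives $\chi(D_0)\le k+\ell-2$.

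The heart of the argument is to show that the spanning subdigraph $D'$ carrying all the level-changing arcs can be properly coloured with at most $k-1$ colours (up to the small additive correction reflected in the statement). The forward arcs ($\lvl(y)=\lvl(x)+1$) and the short backward arcs are controlled by a level-modulo colouring, so the real content is to tame the long backward arcs $xy$, where $\lvl(x)-\lvl(y)$ is large. This is where $2$-strongness enters: given such an arc, I would let $w$ be the least common ancestor of $x$ and $y$, follow the tree-dipath $T[w,x]$ by the arc $xy$, and then use the fact that $D-z$ is strong for every vertex $z$ to produce a second dipath, internally disjoint from a long tree-dipath, joining a common source to a common sink. Combined with Proposition~\ref{prop:longueur} to certify the lengths, this realises a subdivision of $C(k,\ell)$ whenever a long backward arc coexists with the tree structure, and the quantitative form of this dichotomy is what forces at most $k-1$ colours on the level-changing arcs.

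Finally, Lemma~\ref{lem:decomp} would combine the colouring of $D_0$ (at most $k+\ell-2$ colours) with that of $D'$ (at most $k-1$ colours) into a proper colouring of $D$ with at most $(k+\ell-2)(k-1)$ colours, which stays within the threshold, the additive constant providing room for the degenerate endpoints of the analysis; the case $(k,\ell)=(2,2)$ is excluded because it is already settled, with the same numerical value $4$, by Theorem~\ref{thm:C(2,2)} under the weaker hypothesis of strong connectivity. The main obstacle is the long-backward-arc analysis: one must choose the source, the sink, and the two internally disjoint dipaths so that the shorter block still has length at least $\ell$ and the longer at least $k$, which is delicate precisely because the tree-path to the least common ancestor may be short. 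Controlling the length of the second, $2$-strong-provided dipath against the level gaps is therefore the crux of the proof.
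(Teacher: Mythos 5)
Your proposal has a genuine gap at exactly the point you flag as ``the crux'': you never establish that the level-changing arcs can be properly coloured with $k-1$ colours, and the mechanism you sketch for this (least common ancestors plus a second dipath supplied by $2$-strongness to exclude long backward arcs) is not carried out and would be delicate to make work, since nothing controls the length of the tree-path down to the least common ancestor nor the interaction of the second dipath with the levels. Without that claim the product bound $(k+\ell-2)(k-1)$ via Lemma~\ref{lem:decomp} does not follow, so the proof is incomplete. You have also located the role of $2$-strongness in the wrong place: it is not used to tame backward arcs.

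The paper's argument is much shorter and hinges on a dichotomy you do not consider: whether the level $L_k=\{v\mid \dist_D(u,v)=k\}$ is empty. If $L_k\neq\emptyset$, pick $v\in L_k$; by $2$-strongness (Menger) there are two internally disjoint $(u,v)$-dipaths, each of length at least $k\geq\ell$ because $\dist_D(u,v)\geq k$, and their union is already a subdivision of $C(k,\ell)$ --- this is the only use of $2$-strongness. If $L_k=\emptyset$, then $V(D)=\{u\}\cup L_1\cup\cdots\cup L_{k-1}$, and instead of your multiplicative combination one simply uses disjoint palettes on the $k$ parts: if every $\chi(D[L_i])\leq k+\ell-2$ then $\chi(D)\leq 1+(k-1)(k+\ell-2)$, contradicting the hypothesis. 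Hence some $D[L_i]$ has chromatic number at least $k+\ell-1$ and, by Theorem~\ref{thm:2blocks} (this is where $(k,\ell)\neq(2,2)$ is needed), contains a copy $Q$ of $P^+(k-1,\ell-1)$; attaching to its two endvertices the final segments of two $(u,\cdot)$-dipaths that branch at their last common vertex $w$ (each segment having length at least $1$ and meeting $L_i$ only in its endpoint) yields the subdivision of $C(k,\ell)$. Your treatment of $D_0$ coincides with this last step, but the arcs between distinct levels are handled for free by the disjoint palettes, so the $(k-1)$-colouring you were trying to build is never needed.
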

\begin{proof}
Let $D$ be a $2$-strong digraph with chromatic number at least $(k+\ell-2)(k-1)+2$.
Let $u$ be a vertex of $D$.
For every positive integer $i$, let $L_i=\{v \mid \dist_D(u,v)=i\}$.

Assume first that $L_k\neq \emptyset$.
Take $v\in L_k$.
In $D$, there are two internally disjoint $(u,v)$-dipaths $P_1$ and $P_2$.
Those two dipaths have length at least $k$ (and $\ell$ as well) since $\dist_D(u,v)\geq k$.
Hence $P_1\cup P_2$ is a subdivision of $C(k,\ell)$.

Therefore we may assume that $L_k$ is empty, and so $V(D)=\{u\} \cup L_1 \cup \cdots \cup L_{k-1}$.
Consequently, there is $i$ such that $\chi(D[L_i])\geq k+\ell-1$.
Since $k+l-1\geq 3$ and $(k-1, \ell-1)\neq (1,1)$, by Theorem~\ref{thm:2blocks}, $D[L_i]$ contains a copy $Q$ of $P^+(k-1, \ell-1)$.
Let $v_1$ and $v_2$ be the initial and terminal vertices of $Q$.
By definition, for $j\in \{1,2\}$, there is a $(u,v_j)$-dipath $P_j$ in $D$ such that $V(P_j)\cap L_i =\{v_j\}$.
Let $w$ be the last vertex along $P_1$ that is in $V(P_1)\cap V(P_2)$.
Clearly, $P_1[w,v_1]\cup P_2[w,v_2]\cup Q$ is a subdivision of $C(k,\ell)$.
\end{proof}

To go further, it is natural to ask what happens if we consider digraphs which are not only strongly connected but $k$-strongly connected ($k$-strong for short).

\begin{proposition}\label{prop:existe}
Let $C$ be an oriented cycle of order $n$.
Every $(n-1)$-strong digraph contains a subdivision of $C$.
\end{proposition}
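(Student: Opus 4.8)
The plan is to realise $C$ by embedding all but one of its blocks rigidly, at their exact lengths, and then closing the cycle with a single (possibly longer) dipath. Write $C$ through its corners, that is, its $2b$ sources and sinks, where $2b$ is the number of blocks; since every block has length at least $1$ and the block lengths sum to $n$, we have $2b\le n$. Being $(n-1)$-strong, $D$ has at least $n$ vertices, minimum in- and out-degree at least $n-1$, and stays strong after the deletion of any $n-2$ vertices. First I would dispose of the extremal case $|V(D)|=n$: taking complements of the $(n-2)$-subsets $S$ in the definition of $(n-1)$-strong shows that every pair of vertices is joined by arcs in both directions, so $D$ is the complete biorientation on $n$ vertices and trivially contains $C$ itself. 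Hence I may assume $|V(D)|\ge n+1$.

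Next, choose a block $B$ of $C$ of maximum length $\ell^*$ and let $P^*=C-B$ be the oriented path obtained by deleting the arcs of $B$; it runs between the two corners $s,t$ bounding $B$, has $n-\ell^*+1\le n$ vertices, and is a genuine oriented path. Using that both the minimum in-degree and the minimum out-degree of $D$ are at least $n-1\ge |V(P^*)|-1$, I embed $P^*$ greedily as a subdigraph with all images distinct: having placed some vertices, to place the next one I pick a neighbour of the current image of the correct orientation that is distinct from all used vertices; since fewer than $n$ vertices are used and the relevant semidegree is at least $n-1$, such a choice exists (except possibly at the very last vertex when $C$ is antidirected, treated below). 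Let $x,y$ be the images of $s,t$. Deleting from $D$ the $n-\ell^*-1$ interior vertices of the embedded $P^*$ leaves a digraph $D^*$ that is $\ell^*$-strong on at least $\ell^*+2$ vertices, and in which the only vertices of the partial subdivision still present are its two endpoints $x$ and $y$.

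It then remains to close the cycle: I need an $(x,y)$- or $(y,x)$-dipath (according to the orientation of $B$) of length at least $\ell^*$ inside $D^*$; concatenating it with the embedded $P^*$ yields a subdivision of $C$, and internal disjointness is automatic because all interior vertices of $P^*$ were removed. The main obstacle is precisely this length requirement: in digraphs, Menger/fan arguments control disjointness but not the lengths of the routed paths, so I cannot simply quote a linkage theorem. I would resolve it by exploiting that $D^*$ is $\ell^*$-strong, hence has minimum out-degree at least $\ell^*$: a greedy forward walk from $x$ avoiding $y$ never stalls before length $\ell^*$, and then strong connectivity of $D^*$ routes its endpoint to $y$; a short bookkeeping argument (or, if preferred, the sub-lemma that a $k$-strong digraph contains a dipath of length at least $k$ between any ordered pair of vertices) yields the required dipath. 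The one genuinely delicate situation is when every block of $C$ has length $1$, i.e. $C$ is antidirected and $\ell^*=1$: then $P^*$ uses all $n$ vertices and the greedy step may stall at the final vertex, which is handled by the surplus guaranteed by $|V(D)|\ge n+1$ together with the complete-biorientation observation in the case $|V(D)|=n$.
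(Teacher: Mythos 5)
Your overall strategy (greedily embed most of $C$ rigidly using the large semi-degrees, then close up using strong connectivity of what remains) is the right one, but the way you split $C$ creates a genuine gap at the closing step. By deleting a whole block $B$ of length $\ell^*$ and embedding the rest of $C$ rigidly, you force the closing $(x,y)$-dipath to have length at least $\ell^*$, and you do not actually prove that such a dipath exists. The sketch you give --- a greedy forward walk of length $\ell^*$ from $x$ followed by a routing to $y$ --- does not obviously produce a \emph{path} of length at least $\ell^*$: the routing from the walk's endpoint back to $y$ may re-enter the greedy path at an early vertex, and the resulting shortcut can be short. The sub-lemma you invoke as an alternative (every $k$-strong digraph contains an $(x,y)$-dipath of length at least $k$ for every ordered pair) is left unproved, and it is precisely the hard part of your argument; ``a short bookkeeping argument'' does not discharge it.

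The paper avoids this difficulty entirely by cutting $C$ at a single arc rather than at a whole block. Writing $C=(v_1,\dots,v_n,v_1)$ with $(v_1,v_n)\in A(C)$, it greedily embeds the oriented path $(v_1,\dots,v_n)$ on distinct vertices $x_1,\dots,x_n$ (possible since all in- and out-degrees are at least $n-1$ and the current endpoint is never its own neighbour), and then replaces the single remaining arc $(v_1,v_n)$ by \emph{any} $(x_1,x_n)$-dipath in $D-\{x_2,\dots,x_{n-1}\}$, which exists because deleting $n-2$ vertices from an $(n-1)$-strong digraph leaves a strong digraph. Since a subdivision only requires each arc to become a dipath of length at least $1$, no lower bound on the length of the closing dipath is needed. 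If you modify your decomposition to remove one arc of $B$ instead of all of $B$, your argument goes through, the problematic length requirement disappears, and the separate treatment of $|V(D)|=n$ and of antidirected $C$ becomes unnecessary.
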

\begin{proof}
Set $C=(v_1,v_2, \dots , v_n,v_1)$. Without loss of generality, we may assume that $(v_1,v_n) \in A(C)$.
Let $D$ be an $(n-1)$-strong digraph.
Choose a vertex $x_1$ in $V(D)$. Then for $i=2$ to $n$, choose a vertex $x_i$ in $V(D)\setminus \{x_1, \dots, x_{i-1}\}$ such that
$x_{i-1}x_i$ is an arc in $D$ if $v_{i-1}v_i$ is an arc in $C$ and $x_ix_{i-1}$ is an arc in $D$ if $v_iv_{i-1}$ is an arc in $C$. This is possible since every vertex has in- and out-degree at least $n-1$.
Now, since $D$ is $(n-1)$-strong, $D - \{x_2, \dots, x_{n-1}\}$ is strong, so there exists a $(x_1, x_n)$-dipath $P$ in $D - \{x_2, \dots, x_{n-1}\}$.
The union of $P$ and $(x_1,x_2,\dots ,x_n)$ is a subdivision of $C$.
\end{proof}

Let ${\cal S}_p$ be the class of $p$-strong digraphs.
Proposition~\ref{prop:existe} implies directly that $\SForb(C)\cap {\cal S}_p=\emptyset$ and so  $\chi(\SForb(C)\cap {\cal S}_p)=0$ for any oriented cycle $C$ of length $p+1$.
This yields the following problems.

\begin{problem}
Let $C$ be an oriented cycle and $p$ a positive integer.
What is  $\chi(\SForb(C)\cap {\cal S}_{p})$ ?
\end{problem}

Note that $\chi(\SForb(C)\cap {\cal S}_{p+1}) \leq \chi(\SForb(C)\cap {\cal S}_{p})$ for all $p$, because ${\cal S}_{p+1} \subseteq {\cal S}_{p}$.

\begin{problem}
Let $C$ be an oriented cycle.
\begin{itemize}
\item[1)] What is the minimum integer $p_C$ such that $\chi(\SForb(C)\cap {\cal S}_{p_C})<+\infty$ ?
\item[2)] What is the minimum integer $p^0_C$ such that $\chi(\SForb(C)\cap {\cal S}_{p^0_C})=0$ ?
\end{itemize}
\end{problem}

\appendix

\end{document}